\let\@fnsymbol\@arabic
\newcommand{\id}{{\boldsymbol{\mathbbm{1}}}}
\newcommand{\tr}{{\rm tr}}
\newcommand{\dev}{{\rm dev}}
\newcommand{\sym}{{\rm sym}}
\newcommand{\skw}{{\rm skew}}
\newcommand{\Curl}{{\rm Curl}}
\def\dd{\displaystyle}
\newtheorem{theorem}{Theorem}[section]
\newtheorem{remark}[theorem]{Remark}
\newtheorem{proposition}[theorem]{Proposition}
\newtheorem{corollary}[theorem]{Corollary}
\def\barr{\begin{array}}
	\def\earr{\end{array}}
\def\bec#1{\begin{equation}\label{#1}}
\def\becn{\begin{equation*}}
\def\endec{\end{equation}}
\def\endecn{\end{equation*}}
\def\dd{\displaystyle}
\def\bfm#1{\mbox{\boldmat}}
\begin{document}

	\title{The isotropic  Cosserat  shell model  including terms up to  $O(h^5)$. \\
		Part II: Existence of minimizers}
\author{  Ionel-Dumitrel Ghiba\thanks{Corresponding author: Ionel-Dumitrel Ghiba,  \ Department of Mathematics,  Alexandru Ioan Cuza University of Ia\c si,  Blvd.
		Carol I, no. 11, 700506 Ia\c si,
		Romania; and  Octav Mayer Institute of Mathematics of the
		Romanian Academy, Ia\c si Branch,  700505 Ia\c si, email:  dumitrel.ghiba@uaic.ro} \quad and \quad Mircea B\^irsan\thanks{Mircea B\^irsan, \ \  Lehrstuhl f\"{u}r Nichtlineare Analysis und Modellierung, Fakult\"{a}t f\"{u}r Mathematik,
		Universit\"{a}t Duisburg-Essen, Thea-Leymann Str. 9, 45127 Essen, Germany; and Department of Mathematics,  Alexandru Ioan Cuza University of Ia\c si,  Blvd.
		Carol I, no. 11, 700506 Ia\c si,
		Romania;  email: mircea.birsan@uni-due.de} \quad
	and  \quad     Peter Lewintan\,\thanks{Peter Lewintan,  \ \  Lehrstuhl f\"{u}r Nichtlineare Analysis und Modellierung, Fakult\"{a}t f\"{u}r
	Mathematik, Universit\"{a}t Duisburg-Essen,  Thea-Leymann Str. 9, 45127 Essen, Germany, email: peter.lewintan@uni-due.de} \\
	and  \quad      Patrizio Neff\,\thanks{Patrizio Neff,  \ \ Head of Lehrstuhl f\"{u}r Nichtlineare Analysis und Modellierung, Fakult\"{a}t f\"{u}r
		Mathematik, Universit\"{a}t Duisburg-Essen,  Thea-Leymann Str. 9, 45127 Essen, Germany, email: patrizio.neff@uni-due.de}
}
\maketitle
\begin{abstract}
 We show the existence of global minimizers for a geometrically nonlinear  isotropic elastic Cosserat 6-parameter shell model. The proof of the main theorem is based on the direct methods of the calculus of variations using essentially the convexity of the energy in the nonlinear strain and curvature measures. We first show the existence of the solution for the theory including $O(h^5)$ terms. The energy allows us to  show the coercivity for terms up to order $O(h^5)$ and the convexity of the energy. Secondly,  we  consider only that part of the energy  including $O(h^3)$ terms. In this case the obtained minimization problem is not the same as those previously considered in the literature, since  the influence of the curved initial shell configuration appears explicitly  in the expression of the coefficients  of the energies for the reduced two-dimensional variational problem and additional mixed bending-curvature and curvature terms are present. While in the theory including $O(h^5)$ the conditions on the thickness $h$ are those considered in the  modelling process and they are independent of the constitutive parameter,   in the $O(h^3)$-case the coercivity is proven on some more restrictive conditions under the thickness $h$.

 \medskip

 \noindent\textbf{Keywords:}
 geometrically nonlinear Cosserat shell, 6-parameter resultant shell, in-plane drill
 rotations, thin structures, dimensional reduction,  wryness
 tensor, dislocation density tensor, isotropy, calculus of variations, uniform convexity, existence of minimizers
\end{abstract}

\begin{footnotesize}
	\vspace*{-4.4mm}
\tableofcontents
\end{footnotesize}

\section{Introduction}

Shell and plate theories are intended for the study of thin bodies, i.e.,  bodies in which the thickness in one direction is much smaller than the dimensions in the other two orthogonal directions. In this follow up paper we investigate the existence of minimizers to a recently developped isotropic Cosserat shell model \cite{birsan2019refined,GhibaNeffPartI}, including higher order terms. The Cosserat shell model naturally includes an independent triad of rigid directors, which are coupled to the shell-deformation. From an engineering point of view,  such models  are   preferred, since the independent rotation field allows for transparent coupling between shell and beam parts. It is interesting that the kinematical structure of 6-parameter shells  \cite{Eremeyev06,Pietraszkiewicz14,birsan2020derivation} (involving the translation vector and rotation tensor) is identical to the kinematical structure of Cosserat shells (defined as material surfaces endowed with a triad of rigid directors describing the orientation of points).
 Using the derivation approach, Neff  { \cite{Neff_plate04_cmt,Neff_plate07_m3as,Neff_membrane_Weinberg07,Neff_membrane_plate03,Neff_membrane_existence03}
 } 
 has modelled and analysed the so-called nonlinear planar-Cosserat shell models, in which a full triad of orthogonal directors, independent of the normal of the shell, is taken into account. The results have been obtained by an 8-parameter ansatz of the deformation through the thickness and consistent analytic integration over the thickness in the case of a flat undeformed shell reference configuration.   In  previous papers, we have extended the modelling from flat shells to the most general case of initially curved shells \cite{birsan2019refined,GhibaNeffPartI}.
Our ansatz allows for a consistent shell model up to order $O(h^5)$ in the shell thickness. Interestingly, all $O(h^5)$-terms in the shell energy depend on the initial curvature of the shell and vanish for a flat shell.  However, all occurring material coefficients of the shell model are uniquely determined in terms of the underlying isotropic three-dimensional Cosserat bulk-model and the given initial geometry of the shell. Thus, we fill a certain gap in the general 6-parameter shell theory, since all hitherto known  models  leave the precise structure of the constitutive equations wide open. In the present paper, we will show that our model is mathematically well-posed in the sense that global minimizers exist.

The topic of existence of solutions for the 2D equations of linear and nonlinear elastic shells has been treated in many works.   The results that can be found in the literature refer to various types of shell models and they employ different techniques, see  {e.g.}, \cite{Koiter60,Koiter69,Simo89.1,Simo92,Steigmann08,Steigmann12,Sansour92,Tiba02,Birsan08,Ibrahim94,Badur-Pietrasz86}.  The existence theory for linear or nonlinear shells is presented in details in the books of Ciarlet \cite{Ciarlet97,Ciarlet98,Ciarlet00}, together with many historical remarks and bibliographic references. A fruitful approach to the existence theory of 2D plate and shell models (obtained as limit cases of 3D models) is the $\Gamma$-convergence analysis of thin structures, see  {e.g.}, \cite{Neff_Chelminski_ifb07,Neff_Danzig05,Neff_Hong_Reissner08,Paroni06}.   By ignoring the Cosserat effects, in order to start with a well-posed three dimensional model, it is  {preferable} to  consider a polyconvex  energy \cite{Ball77} in the three-dimensional formulation of the initial problem.  In this direction, an example is the article \cite{ciarlet2018existence}, see also \cite{ciarlet2013orientation,bunoiu2015existence}, where the Ciarlet-Geymonat energy \cite{ciarlet1982lois} is used. In these articles, no through the thickness integration is performed analytically and no reduced completely two-dimensional minimization problem is presented. The obtained problems are ``two-dimensional" only in the sense that the final problem is to find three vector fields  on a bounded open subset of $\mathbb{R}^2$, but all three-dimensional coordinates remain present in the minimization problem.  By contrast,  when a  nonlinear three-dimensional problem in the Cosserat theory is considered, the three-dimensional problem is well-posed \cite{neff2014existence,Tambaca10,tambavca2010semicontinuity} and permits a complete dimensional reduction.

The classical geometrically nonlinear Kirchhoff-Love model (the Koiter model for short), is  given by the minimization problem  with respect to the midsurface deformation $m:\omega\subset \mathbb{R}^2\to \mathbb{R}^3$ of the type\footnote{For the sake of simplicity, in this overview in the introduction, we have ignored  the dependence of the minimization problems on  $\nabla \Theta$, where $\Theta$ is the diffeomorphism which maps the midsurface $\omega$ of the fictitious Cartesian parameter space onto the midsurface of the curved reference configuration, the focus being to  present only the essential energy terms appearing in the variational problems.}
\begin{equation}\label{Ap7matrix}
\begin{array}{l}
\dd\int_\omega \bigg\{h\bigg(
\mu\rVert    \big({\rm I}_m-{\rm I}_{y_0}\big) \rVert^2  +\dfrac{\,\lambda\,\mu}{\lambda+2\,\mu} \, \mathrm{tr} \Big[ \big({\rm I}_m-{\rm I}_{y_0}\big) \Big]^2\bigg) \vspace{1.2mm}\\\qquad  +\dd\frac{h^3}{12}\bigg(
\mu\rVert    \big({\rm II}_m-{\rm II}_{y_0}\big) \rVert^2  +\dfrac{\,\lambda\,\mu}{\lambda+2\,\mu} \, \mathrm{tr} \Big[ \big({\rm II}_m-{\rm II}_{y_0}\big) \Big]^2\bigg)\bigg\}\, {\rm d}a,
\end{array}
\end{equation}
where ${\rm I}_{m}:=[{\nabla  m}]^T\,{\nabla  m}\in \mathbb{R}^{2\times 2}$ and  ${\rm II}_{m}:\,=\,-[{\nabla  m}]^T\,{\nabla  m}\in \mathbb{R}^{2\times 2}$ are  the matrix representations of the {\it first fundamental form (metric)} and the  {\it  second fundamental form}  {on $m(\omega)$}, respectively. However, this problem  is notoriously ill-posed, since the first membrane term is  non-convex in $\nabla\, m$ and is indeed a non-rank-one elliptic  {energy}. Even the inclusion of the bending terms is not sufficient to regularize the problem \cite{Neff_plate04_cmt,Neff_plate07_m3as}.  The very same problem arises in geometrically nonlinear Reissner-Mindlin (Naghdi) type shell models, which already include an independent director-vector-field that does not coincide with the normal to the surface, as in the Kirchhoff-Love model.

Let us explain the typical situation by looking at representative energy terms for the different models. Assume that $m:\omega\subset \mathbb{R}^2\to \mathbb{R}^3$ is the deformation of the midsurface of a flat shell, $n_m$ is the unit normal to the shell midsurface, the unit vector $d:\omega\subset\mathbb{R}^2\to \mathbb{R}^3$  is an independent director  vector-field, and $\overline{R}:\omega\subset \mathbb{R}^2\to {\rm SO}(3)$ is an independent rotation field. Then the essence of a Kirchhoff-Love planar shell model is represented by the minimization problem  with respect to $m:\omega\subset \mathbb{R}^2\to \mathbb{R}^3$ of the type
\begin{align}\label{KLmatrix}
\hspace*{-3cm}\text{``Kirchhoff-Love type''}\qquad \qquad
\dd\int_\omega \bigg\{h\,&\underbrace{\lVert   (\nabla m\,|\,n_m)^T(\nabla m\,|\,n_m)-\id_3\rVert^2}_{\text{``membrane"}} +\dd\frac{h^3}{12}\underbrace{\rVert    \nabla n_m\rVert^2}_{\text{``bending"}} \bigg\}\, {\rm d}a\\&=\dd\int_\omega \bigg\{h\,\lVert   (\nabla m)^T(\nabla m)-\id_2\rVert^2 +\dd\frac{h^3}{12}\rVert    \nabla n_m\rVert^2\bigg\}\, {\rm d}a.\notag
\end{align}
The essence of the corresponding Reissner-Mindlin problem is represented by the minimization problem  with respect to $(m,d)$ of the type
\begin{equation}\label{RMmatrix}
\hspace*{-3.8cm}\text{``Reissner-Mindlin type''}\qquad \quad\begin{array}{l}
\dd\int_\omega \bigg\{h\,\lVert   (\nabla m\,|\,d)^T(\nabla m\,|\,d)-\id_3\rVert^2  +\dd\frac{h^3}{12}\rVert    \nabla d\rVert^2 \bigg\}\, {\rm d}a.
\end{array}
\end{equation}
And finally, the Cosserat flat shell model has the structure given by the minimization problem  with respect to $(m,\overline{R})$ of the type
\begin{equation}\label{Cmatrix}
\hspace*{-4cm} \text{``Cosserat-shell''}\qquad \qquad \qquad\ \     \begin{array}{l}
\dd\int_\omega \bigg\{h\,\lVert  \overline{R}^T (\nabla m\,|\,\overline{R}\, e_3)-\id_3\rVert^2  +\dd\frac{h^3}{12}\rVert    \nabla \overline{R}\rVert^2 \bigg\}\, {\rm d}a.
\end{array}
\end{equation}
Problems \eqref{KLmatrix} and \eqref{RMmatrix} are non-elliptic with respect to $m$ at given $d$, while problem \eqref{Cmatrix} is even linear with respect to $m$ at given rotation field $\overline{R}$, which is itself controlled by the curvature term  $\rVert    \nabla \overline{R}\rVert^2$. Therefore, in principle, \eqref{Cmatrix} admits  minimizers, while \eqref{KLmatrix} and \eqref{RMmatrix} in general do not.

In view of these mathematical deficiencies, in the literature we find many types of existence theorems, which
  treat certain approximations of \eqref{Ap7matrix}. The above mentioned approach by Ciarlet and his co-authors \cite{ciarlet2018existence,ciarlet2013orientation,bunoiu2015existence} falls into this category. It has already been noted by Neff \cite{Neff_plate04_cmt}, that an independent control of the continuum rotations in quadratic, non-rank-one convex energies like the membrane-term in \eqref{Ap7matrix} is sufficient to resolve the non-rank-one convexity issue. This is precisely, what the Cosserat shell model is incorporating from the outset by considering not a single director as additional independent field, but a triad of rigid directors - the rotation field $\overline{R}\in {\rm SO}(3)$.

  Concerning the geometrically nonlinear theory of elastic Cosserat shells with drilling rotations including $O(h^3)$-terms, there is no existence theorem published in the literature, except \cite{NeffBirsan13}, as far as we are aware of.  Existence results for the related Cosserat model of initially planar shells have been obtained earlier by Neff \cite{Neff_plate04_cmt,Neff_plate07_m3as}.
 For our new model, we search for the minimizing solution pair of class ${\rm H}^1(\omega,\mathbb{R}^3)$ for the translation vector and ${\rm H}^1(\omega,{\rm SO}(3))$ for the rotation tensor. For the proof of existence, we employ the direct methods of the calculus of variations, extensions of the techniques presented in \cite{Neff_plate04_cmt,Neff_plate07_m3as,NeffBirsan13,Birsan-Neff-L54-2014}, coercivity and uniform convexity of the energy in the appropriate geometrically nonlinear strain and curvature measure.  A first task is to show the existence of the solution for the theory including $O(h^5)$-terms. In this case the expression of the energy allows us to have a decent control on each term of the energy density, in order to show the coercivity and the convexity of the energy. A second task is to consider that part of the energy which contains only $O(h^3)$-terms. In this case the obtained minimization problem is not the same as that considered in \cite{Pietraszkiewicz04,Pietraszkiewicz-book04,Pietraszkiewicz10,Eremeyev06,NeffBirsan13,Birsan-Neff-L54-2014}, since additional mixed bending-curvature and curvature energy-terms  are included and the influence of the curved initial shell configuration appears explicitly  in the expression of the coefficients  of the energies for the reduced two-dimensional variational problem. For the $O(h^3)$-model, the problem of coercivity turns out to be more delicate,  since some steps used to prove the coercivity for the $O(h^5)$-model cannot be done in the same manner. As a preparation for the existence proofs we will rewrite the energy in an equivalent form that allows us to  prove the coercivity and convexity of the energy.
  Moreover, for the $O(h^3)$-model,  we need to impose either a stronger assumption on the constitutive parameters or a relation between the thickness and the characteristic length.  This behaviour  highlights the importance and interest for including $O(h^5)$-terms.

\section{The new   geometrically nonlinear   Cosserat shell model}\setcounter{equation}{0}\label{model}

\subsection{Notation}

In this paper,
for $a,b\in\mathbb{R}^n$ we let $\bigl\langle {a},{b} \bigr\rangle _{\mathbb{R}^n}$  denote the scalar product on $\mathbb{R}^n$ with
associated  {(squared)} vector norm $\lVert a\rVert _{\mathbb{R}^n}^2=\bigl\langle {a},{a} \bigr\rangle _{\mathbb{R}^n}$
The standard Euclidean scalar product on  the set of real $n\times  {m}$ second order tensors $\mathbb{R}^{n\times  {m}}$ is given by
$\bigl\langle  {X},{Y} \bigr\rangle _{\mathbb{R}^{n\times  {m}}}={\rm tr}(X\, Y^T)$, and thus the  {(squared)} Frobenius tensor norm is
$\lVert {X}\rVert ^2_{\mathbb{R}^{n\times  {m}}}=\bigl\langle  {X},{X} \bigr\rangle _{\mathbb{R}^{n\times  {m}}}$. In the following we omit the subscripts
$\mathbb{R}^n,\mathbb{R}^{n\times  {m}}$. The identity tensor on $\mathbb{R}^{n \times n}$ will be denoted by $\id_n$, so that
${\rm tr}({X})=\bigl\langle {X},{\id}_n \bigr\rangle $. We let ${\rm Sym}(n)$ and ${\rm Sym}^+(n)$ denote the symmetric and positive definite symmetric tensors, respectively.  We adopt the usual abbreviations of Lie-group theory,  {e.g.},
${\rm GL}(n)=\{X\in\mathbb{R}^{n\times n}\;|\det({X})\neq 0\}$ the general linear group{,} ${\rm SO}(n)=\{X\in {\rm GL}(n)| X^TX=\id_n,\det({X})=1\}$ with
corresponding Lie-algebras $\mathfrak{so}(n)=\{X\in\mathbb{R}^{n\times n}\;|X^T=-X\}$ of skew symmetric tensors
and $\mathfrak{sl}(n)=\{X\in\mathbb{R}^{n\times n}\;| \,\tr({X})=0\}$ of traceless tensors. For all $X\in\mathbb{R}^{n\times n}$ we set ${\rm sym}\, X\,=\frac{1}{2}(X+X^T)\in{\rm Sym}(n)$, $\skw\,X\,=\frac{1}{2}(X-X^T)\in \mathfrak{so}(n)$ and the deviatoric part $\dev \,X\,=X-\frac{1}{n}\;\,\tr(X)\,\id_n\in \mathfrak{sl}(n)$  and  {we have
the orthogonal Cartan-decomposition  of the Lie-algebra $\mathfrak{gl}(n)
$ on the vector space $\mathbb{R}^{n\times n}$}, $
\mathfrak{gl}(n)=\{\mathfrak{sl}(n)\cap {\rm Sym}(n)\}\oplus\mathfrak{so}(n) \oplus\mathbb{R}\!\cdot\! \id_n,$ $
X=\dev\, \sym \,X\,+ \skw\,X\,+\frac{1}{n}\,\tr(X)\, \id_n\,.
$ A matrix having the  three  columns vectors $A_1,A_2, A_3$ will be written as
$
(A_1\,|\, A_2\,|\,A_3).
$ We make use of the operator $\mathrm{axl}: \mathfrak{so}(3)\to\mathbb{R}^3$ associating with a matrix $A\in \mathfrak{so}(3)$ the vector $\mathrm{axl}{\,A}\coloneqq(-A_{23},A_{13},-A_{12})^T$. The inverse of the operator $\mathrm{axl}: \mathfrak{so}(3)\to\mathbb{R}^3$ is denoted by $\mathrm{anti}: \mathbb{R}^3\to\mathfrak{so}(3)$.

 Let $\Omega$ be an open domain of $\mathbb{R}^{3}$. The usual Lebesgue spaces of square integrable functions, vector or tensor fields on $\Omega$ with values in $\mathbb{R}$, $\mathbb{R}^3$, $\mathbb{R}^{3\times 3}$ or ${\rm SO}(3)$, respectively will be denoted by ${\rm L}^2(\Omega;\mathbb{R})$, ${\rm L}^2(\Omega;\mathbb{R}^3)$, ${\rm L}^2(\Omega; \mathbb{R}^{3\times 3})$ and ${\rm L}^2(\Omega; {\rm SO}(3))$, respectively. Moreover, we use the standard Sobolev spaces ${\rm H}^{1}(\Omega; \mathbb{R})$ \cite{Adams75,Raviart79,Leis86}
of functions $u$.  For vector fields $u=\left(    u_1, u_2, u_3\right)^T$ with  $u_i\in {\rm H}^{1}(\Omega)$, $i=1,2,3$,
we define
$
\nabla \,u:=\left(
\nabla\,  u_1\,|\,
\nabla\, u_2\,|\,
\nabla\, u_3
\right)^T.
$
The corresponding Sobolev-space will be denoted by
$
{\rm H}^1(\Omega; \mathbb{R}^{3})$. If a tensor $Q:\Omega\to {\rm SO}(3)$ has the components in ${\rm H}^1(\Omega; \mathbb{R})$, then we mark this by writing  $Q\in{\rm H}^1(\Omega; {\rm SO}(3))$. When writting the norm in the corresponding  Sobolev-space we will specify the space in subscript. The space will be omitted only when the Frobenius norm or scalar product is considered.

\subsection{The deformation of  Cosserat shells}
Let $\Omega_\xi\subset\mathbb{R}^3$ be a three-dimensional {\it shell-like thin domain}. In a fixed  standard base  $e_1, e_2, e_3$   of $
\mathbb{R}^3$, a generic point of $\Omega_\xi$ will be denoted by $(\xi_1,\xi_2,\xi_3)$. The elastic material constituting the shell is assumed to be homogeneous and isotropic and the reference configuration $\Omega_\xi$ is assumed to be a natural state.
The deformation of the body occupying the domain $\Omega_\xi$ is described by a vector map $\varphi_\xi:\Omega_\xi\subset\mathbb{R}^3\rightarrow\mathbb{R}^3$ (\textit{called deformation}) and by a \textit{microrotation}  tensor
$
 \overline{R}_\xi:\Omega_\xi\subset\mathbb{R}^3\rightarrow {\rm SO}(3)\, .
$
We denote the current configuration (deformed configuration) by $\Omega_c:=\varphi_\xi(\Omega_\xi)\subset\mathbb{R}^3$,
see Figure \ref{Fig1}.

\begin{figure}[h!]
	\begin{center}
		\includegraphics[scale=1.3]{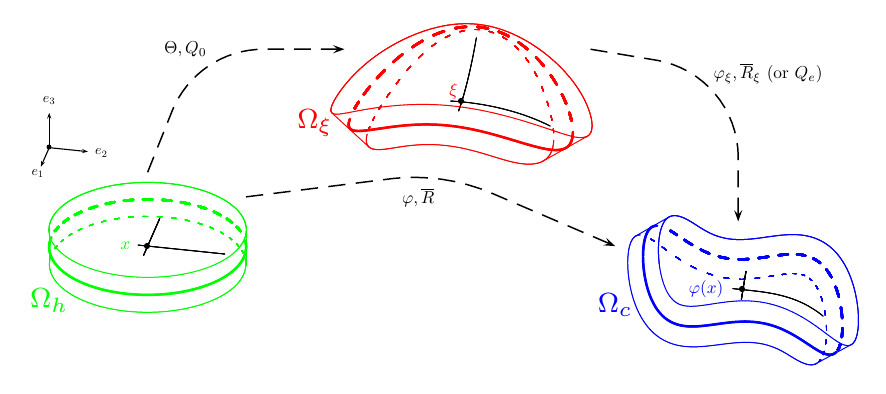}
		% % % 	\includegraphics[scale=0.75]{Fig1.eps}
		% % % 		\put(-330,45){\footnotesize{$O$}} \put(-358,40){\footnotesize{$\boldsymbol{e}_1$}}
		% % % 		\put(-305,59){\footnotesize{${e}_2$}}
		% % % 		\put(-330,85){\footnotesize{${e}_3$}}
		% % % 		\put(-290,35){\footnotesize{${x}$}}
		% % % 		%\put(-350,32){\small{$\omega$}}
		% % % 		\put(-310,5){\footnotesize{$\Omega_h$}}
		% % % 		\put(-197,104){\footnotesize{$\Omega_\xi$}}
		% % % 		\put(-185,150){\small{${\xi}$}}
		% % % 		\put(-60,14){\footnotesize{$\Omega_c$}}
		% % % 		\put(-75,48){\footnotesize{$\varphi({x})\!=\!\varphi_\xi({\xi})$}}
		% % % 		\put(-295,90){\footnotesize{$\Theta  ,  Q_0$}}
		% % % 		\put(-190,70){\footnotesize{$\varphi  , \overline{ R}$}}
		% % % 		\put(-75,120){\footnotesize{$\varphi_\xi  ,  \overline{R}_\xi\ \ (\text{or }\  Q_e)$}}
		\put(-325,50){\textcolor{green}{$\omega$}}
		\caption{\footnotesize The shell in its   initial configuration $\Omega_\xi$, the shell in the deformed configuration $\Omega_c$, and the fictitious planar Cartesian reference configuration $\Omega_h$. Here,  $\overline{R}_{\xi}  $ is the elastic rotation field, ${Q}_{0}$ is the  initial rotation from the fictitious planar Cartesian reference configuration to the initial  configuration $\Omega_\xi$, and $\overline{R}$ is the total rotation field from the fictitious planar Cartesian reference configuration to the deformed configuration $\Omega_c$.}
		\label{Fig1}       % Give a unique label
	\end{center}
\end{figure}

In what follows, we consider  the \textit{fictitious Cartesian (planar) configuration} $\Omega_h$ of the body. This parameter domain $\Omega_h\subset\mathbb{R}^3$ is a right cylinder of the form
$$\Omega_h=\left\{ (x_1,x_2,x_3) \,\Big|\,\, (x_1,x_2)\in\omega, \,\,\,-\dfrac{h}{2}\,< x_3<\, \dfrac{h}{2}\, \right\} =\,\,\dd\omega\,\times\left(-\frac{h}{2},\,\frac{h}{2}\right),$$
where  $\omega\subset\mathbb{R}^2$ is a bounded domain with Lipschitz boundary
$\partial \omega$ and the constant length $h>0$ is the \textit{thickness of the shell}.
For shell--like bodies we consider   the  domain $\Omega_h $ to be {thin}, i.e., the thickness $h$ is {small}.
We assume furthermore that there exists  a $C^1$-diffeomorphism $\Theta:\mathbb{R}^3\rightarrow\mathbb{R}^3$  in the specific form
\begin{equation}\label{defTheta}
\Theta(x_1,x_2,x_3)\,=\,y_0(x_1,x_2)+x_3\ n_0(x_1,x_2), \ \ \ \ \ n_0\,=\,\dd\frac{\partial_{x_1}y_0\times \partial_{x_2}y_0}{\lVert \partial_{x_1}y_0\times \partial_{x_2}y_0\rVert }\, ,
\end{equation}
where $y_0:\omega\to \mathbb{R}^3$ is a function of class $C^2(\omega)$,  {so that $\Theta$}  maps the fictitious planar Cartesian parameter space $\Omega_h$ onto  the initially curved reference configuration of the shell  $\Theta(\Omega_h)=\Omega_\xi$,   $\Theta(x_1,x_2,x_3)=(\xi_1,\xi_2,\xi_3)$.
The diffeomorphism
$\Theta$ maps the midsurface $\omega$ of the fictitious Cartesian  parameter space $\Omega_h$ onto the midsurface $\omega_\xi=y_0(\omega)$ of $\Omega_\xi$ and $n_0$ is the unit normal vector to $\omega_\xi$.   For simplicity and  where no confusions may arise,  we will omit subsequently  to write explicitly   the arguments $(x_1,x_2, x_3)$ of the diffeomorphism  $\Theta$ or we will specify only its dependence  on $x_3$. We use the polar decomposition \cite{neff2013grioli} of $\nabla_x \Theta(x_3)$  and write
$
\nabla_x \Theta(x_3)={Q}_0(x_3)\, U_0(x_3)\, ,$
${Q}_0(x_3)={\rm polar}{(\nabla_x \Theta)(x_3)}\in {\rm SO}(3 ),$   $U _0(x_3)\in \rm{Sym}^+(3).
$ Let us remark that
\begin{align}
\nabla_x \Theta(x_3)&\,=\,(\nabla y_0|n_0)+x_3(\nabla n_0|0) \, \  \forall\, x_3\in \left(-\frac{h}{2},\frac{h}{2}\right),
\ \
\nabla_x \Theta(0)\,=\,(\nabla y_0|\,n_0),\ \ [\nabla_x \Theta(0)]^{-T}\, e_3\,=n_0,
\end{align}
and that $\det(\nabla y_0|n_0)=\sqrt{\det[ (\nabla y_0)^T\nabla y_0]}$ represents the surface element.

In the following, we consider the  {\it Weingarten map\footnote{We identify the Weingarten map, the first fundamental form and the second fundamental form with their associated matrices in the fixed base vector $e_1,e_2,e_3$.} (or shape operator)}  {on $y_0(\omega)$} defined by  {its associated matrix}
$
{\rm L}_{y_0}\,=\, {\rm I}_{y_0}^{-1} {\rm II}_{y_0}\in \mathbb{R}^{2\times 2},
$
where ${\rm I}_{y_0}:=[{\nabla  y_0}]^T\,{\nabla  y_0}\in \mathbb{R}^{2\times 2}$ and  ${\rm II}_{y_0}:\,=\,-[{\nabla  y_0}]^T\,{\nabla  n_0}\in \mathbb{R}^{2\times 2}$ are  the matrix representations of the {\it first fundamental form (metric)} and the  {\it  second fundamental form}, respectively.
Then, the {\it Gau{\ss} curvature} ${\rm K}$ of the surface  {$y_0(\omega)$}  is determined by
$
{\rm K} :=\,{\rm det}{({\rm L}_{y_0})}\,
$
and the {\it mean curvature} $\,{\rm H}\,$ through
$
2\,{\rm H}\, :={\rm tr}({{\rm L}_{y_0}}).
$  We also need  the  tensors defined by:
\begin{align}\label{AB}
{\rm A}_{y_0}&:=(\nabla y_0|0)\,\,[\nabla_x \Theta(0)]^{-1}\in\mathbb{R}^{3\times 3}, \qquad
{\rm B}_{y_0}:=-(\nabla n_0|0)\,\,[\nabla_x \Theta(0)]^{-1}\in\mathbb{R}^{3\times 3},
\end{align}
and the so-called \textit{{alternator tensor}} ${\rm C}_{y_0}$ of the surface \cite{Zhilin06}
	\begin{align}
{\rm C}_{y_0}:=\det(\nabla_x \Theta(0))\, [	\nabla_x \Theta(0)]^{-T}\,\begin{footnotesize}\begin{pmatrix}
0&1&0 \\
-1&0&0 \\
0&0&0
\end{pmatrix}\end{footnotesize}\,  [	\nabla_x \Theta(0)]^{-1}.
\end{align}

Now, let us  define the map
$
\varphi:\Omega_h\rightarrow \Omega_c,\  \varphi(x_1,x_2,x_3)=\varphi_\xi( \Theta(x_1,x_2,x_3)).
$
We view $\varphi$ as a function which maps the fictitious  planar reference configuration $\Omega_h$ into the deformed configuration $\Omega_c$.
We also consider the \textit{elastic microrotation}
$
\overline{Q}_{e,s}:\Omega_h\rightarrow{\rm SO}(3),\  \overline{Q}_{e,s}(x_1,x_2,x_3):=\overline{R}_\xi(\Theta(x_1,x_2,x_3))\,.
$

In \cite{GhibaNeffPartI}, by   assuming that
$
\overline{Q}_{e,s}(x_1,x_2,x_3)=\overline{Q}_{e,s}(x_1,x_2)
$
and  considering an \textit{8-parameter quadratic ansatz} in the thickness direction for the reconstructed total deformation $\varphi_s:\Omega_h\subset \mathbb{R}^3\rightarrow \mathbb{R}^3$ of the shell-like body,
 we have obtained a two-dimensional minimization problem in which the energy density is expressed in terms of the  following tensor fields  on the surface $\omega\,$
\begin{align}\label{e55}
\mathcal{E}_{m,s} & :\,=\,    \overline{Q}_{e,s}^T  (\nabla  m|\overline{Q}_{e,s}\nabla_x\Theta(0)\, e_3)[\nabla_x \Theta(0)]^{-1}-\id_3\qquad \qquad \quad\ \,\, \text{({elastic shell strain tensor})} ,  \\
\mathcal{K}_{e,s} & :\,=\,  (\mathrm{axl}(\overline{Q}_{e,s}^T\,\partial_{x_1} \overline{Q}_{e,s})\,|\, \mathrm{axl}(\overline{Q}_{e,s}^T\,\partial_{x_2} \overline{Q}_{e,s})\,|0)[\nabla_x \Theta(0)]^{-1} \qquad \text{(elastic shell bending--curvature tensor)},\notag
\end{align}
where $m:\omega\subset\mathbb{R}^2\to\mathbb{R}^3$ represents the
deformation of the midsurface.  When these measures vanish, the shell undergoes a rigid body motion. Indeed, $\mathcal{K}_{e,s} {=0}$ implies $\partial_{x_1} \overline{Q}_{e,s}=0$, $\partial_{x_2} \overline{Q}_{e,s}=0$, while $\mathcal{E}_{m,s}=0$ leads to $\nabla m=\overline{Q}_{e,s} \nabla y_0$. Since $\overline{Q}_{e,s}$ is constant and $m=\overline{Q}_{e,s}\,y_0+c$, where $c$ is a constant vector field, this means that the shell is in a rigid body motion with constants translation $c$ and constant rotation $\overline{Q}_{e,s}$.

\subsection{Formulation of the minimization problem}

In   \cite{GhibaNeffPartI}, we have obtained the following two-dimensional minimization problem   for the
deformation of the midsurface $m:\omega
\,{\to}\,
\mathbb{R}^3$ and the microrotation of the shell
$\overline{Q}_{e,s}:\omega
\,{\to}\,
\textrm{SO}(3)$ solving on $\omega
\,\subset\mathbb{R}^2
$: minimize with respect to $ (m,\overline{Q}_{e,s}) $ the  functional
\begin{equation}\label{e89}
I(m,\overline{Q}_{e,s})\!=\!\! \int_{\omega}   \!\!\Big[  \,
W_{\mathrm{memb}}\big(  \mathcal{E}_{m,s}  \big) +W_{\mathrm{memb,bend}}\big(  \mathcal{E}_{m,s} ,\,  \mathcal{K}_{e,s} \big)   +
W_{\mathrm{bend,curv}}\big(  \mathcal{K}_{e,s}    \big)
\Big] \,{\rm det}(\nabla y_0|n_0)       \,  {{\rm d}a} - \overline{\Pi}(m,\overline{Q}_{e,s})\,,
\end{equation}
where the  membrane part $\,W_{\mathrm{memb}}\big(  \mathcal{E}_{m,s} \big) \,$, the membrane--bending part $\,W_{\mathrm{memb,bend}}\big(  \mathcal{E}_{m,s} ,\,  \mathcal{K}_{e,s} \big) \,$ and the bending--curvature part $\,W_{\mathrm{bend,curv}}\big(  \mathcal{K}_{e,s}    \big)\,$ of the shell energy density are given by
\begin{align}\label{e90}
W_{\mathrm{memb}}\big(  \mathcal{E}_{m,s} \big)=& \, \Big(h+{\rm K}\,\dfrac{h^3}{12}\Big)\,
W_{\mathrm{shell}}\big(    \mathcal{E}_{m,s} \big),\vspace{2.5mm}\notag\\
W_{\mathrm{memb,bend}}\big(  \mathcal{E}_{m,s} ,\,  \mathcal{K}_{e,s} \big)=& \,   \Big(\dfrac{h^3}{12}\,-{\rm K}\,\dfrac{h^5}{80}\Big)\,
W_{\mathrm{shell}}  \big(   \mathcal{E}_{m,s} \, {\rm B}_{y_0} +   {\rm C}_{y_0} \mathcal{K}_{e,s} \big)  \\&
-\dfrac{h^3}{3} \mathrm{ H}\,\mathcal{W}_{\mathrm{shell}}  \big(  \mathcal{E}_{m,s} ,
\mathcal{E}_{m,s}{\rm B}_{y_0}+{\rm C}_{y_0}\, \mathcal{K}_{e,s} \big)+
\dfrac{h^3}{6}\, \mathcal{W}_{\mathrm{shell}}  \big(  \mathcal{E}_{m,s} ,
( \mathcal{E}_{m,s}{\rm B}_{y_0}+{\rm C}_{y_0}\, \mathcal{K}_{e,s}){\rm B}_{y_0} \big)\vspace{2.5mm}\notag\\&+ \,\dfrac{h^5}{80}\,\,
W_{\mathrm{mp}} \big((  \mathcal{E}_{m,s} \, {\rm B}_{y_0} +  {\rm C}_{y_0} \mathcal{K}_{e,s} )   {\rm B}_{y_0} \,\big),  \vspace{2.5mm}\notag\\
W_{\mathrm{bend,curv}}\big(  \mathcal{K}_{e,s}    \big) =&  \,\Big(h-{\rm K}\,\dfrac{h^3}{12}\Big)\,
W_{\mathrm{curv}}\big(  \mathcal{K}_{e,s} \big)    +  \Big(\dfrac{h^3}{12}\,-{\rm K}\,\dfrac{h^5}{80}\Big)\,
W_{\mathrm{curv}}\big(  \mathcal{K}_{e,s}   {\rm B}_{y_0} \,  \big)  + \,\dfrac{h^5}{80}\,\,
W_{\mathrm{curv}}\big(  \mathcal{K}_{e,s}   {\rm B}_{y_0}^2  \big),\notag
\end{align}
with
\begin{align}\label{quadraticforms}
W_{\mathrm{shell}}(  S) & =   \mu\,\lVert  \mathrm{sym}\,S\rVert ^2 +  \mu_{\rm c}\lVert \mathrm{skew}\,S\rVert ^2 +\dfrac{\lambda\,\mu}{\lambda+2\,\mu}\,\big[ \mathrm{tr}   (S)\big]^2,\\
\mathcal{W}_{\mathrm{shell}}(  S,  T)& =   \mu\,\bigl\langle  \mathrm{sym}\,S,\,\mathrm{sym}\,   \,T \bigr\rangle   +  \mu_{\rm c}\bigl\langle \mathrm{skew}\,S,\,\mathrm{skew}\,   \,T \bigr\rangle   +\,\dfrac{\lambda\,\mu}{\lambda+2\,\mu}\,\mathrm{tr}   (S)\,\mathrm{tr}   (T),  \notag\vspace{2.5mm}\\
W_{\mathrm{mp}}(  S)&= \mu\,\lVert \mathrm{sym}\,S\rVert ^2+  \mu_{\rm c}\lVert \mathrm{skew}\,S\rVert ^2 +\,\dfrac{\lambda}{2}\,\big[  \tr(S)\,\big]^2=
\mathcal{W}_{\mathrm{shell}}(  S)+ \,\dfrac{\lambda^2}{2\,(\lambda+2\,\mu)}\,[\mathrm{tr} (S)]^2,\notag\vspace{2.5mm}\\
W_{\mathrm{curv}}(  S )&=\mu\, L_{\rm c}^2 \left( b_1\,\lVert  \dev\,\text{sym} \,S\rVert ^2+b_2\,\lVert \text{skew}\,S\rVert ^2+b_3\,
[\tr (S)]^2\right) .\notag
\end{align}

The parameters $\mu$ and $\lambda$ are the \textit{Lam\'e constants}
of classical isotropic elasticity, $\kappa=\frac{2\,\mu+3\,\lambda}{3}$ is the \textit{infinitesimal bulk modulus}, $b_1, b_2, b_3$ are \textit{non-dimensional constitutive curvature coefficients (weights)}, $\mu_{\rm c}\geq 0$ is called the \textit{{Cosserat couple modulus}} and ${L}_{\rm c}>0$ introduces an \textit{{internal length} } which is {characteristic} for the material,  {e.g.}, related to the grain size in a polycrystal. The
internal length ${L}_{\rm c}>0$ is responsible for \textit{size effects} in the sense that smaller samples are relatively stiffer than
larger samples. If not stated otherwise, we assume that $\mu>0$, $\kappa>0$, $\mu_{\rm c}>0$, $b_1>0$, $b_2>0$, $b_3> 0$. Here, $\mu\,L_{\rm c}^2$  plays the role of a {\it dimensional agreement factor}. Without loss of  generality, we may assume that $0<b_1<1$, $0<b_2<1$, $0<b_3<1$. All  constitutive coefficients  are deduced from the three-dimensional formulation, without using any a posteriori fitting of some two-dimensional constitutive coefficients.

The potential of applied external loads $ \overline{\Pi}(m,\overline{Q}_{e,s}) $ appearing in \eqref{e89} is expressed by
\begin{align}\label{e4o}
\overline{\Pi}(m,\overline{Q}_{e,s})\,=&\, \, \Pi_\omega(m,\overline{Q}_{e,s}) + \Pi_{\gamma_t}(m,\overline{Q}_{e,s})\,,\qquad\textrm{with}   \\
\Pi_\omega(m,\overline{Q}_{e,s}) \,=&\, \dd\int_{\omega}\bigl\langle  {f}, u \bigr\rangle \,  {{\rm d}a} + \Lambda_\omega(\overline{Q}_{e,s})\qquad \text{and}\qquad
\Pi_{\gamma_t}(m,\overline{Q}_{e,s})\,=\, \dd\int_{\gamma_t}\bigl\langle  {t},  u \bigr\rangle \, ds + \Lambda_{\gamma_t}(\overline{Q}_{e,s})\,,\notag
\end{align}
where $ u(x_1,x_2) \,=\, m(x_1,x_2)-y_0(x_1,x_2) $ is the displacement vector of the midsurface,   {$\Pi_\omega(m,\overline{Q}_{e,s})$ is the total potential of the external surface loads $f$ and of the external applied body couples $\Lambda_\omega(\overline{Q}_{e,s})$, while  $\Pi_{\gamma_t}(m,\overline{Q}_{e,s})$ is the total potential of the external boundary loads $t$ and of the external boundary couples $ \Lambda_{\gamma_t}(\overline{Q}_{e,s})$}.  Here, $ \gamma_t $ and $ \gamma_d $ are nonempty subsets of the boundary of $ \omega $ such that $   \gamma_t \cup \gamma_d= \partial\omega $ and $ \gamma_t \cap \gamma_d= \emptyset $\,. On $ \gamma_t $ we have considered traction boundary conditions, while on $ \gamma_d $ we have the Dirichlet-type boundary conditions: \begin{align}\label{boundary}
m\big|_{\gamma_d}&=m^*,\ \  \ \ \ \ \ \ \text{simply supported (fixed, welded)}, \qquad \qquad
 \overline{Q}_{e,s}\big|_{\gamma_d}=\overline{Q}_{e,s}^*,\ \  \ \ \ \ \ \ \text{clamped\footnotemark},
\end{align}
\footnotetext{The existence theory works also for free microrotations at the boundary since ${\rm SO}(3)$ is a compact manifold.}
where the boundary conditions are to be understood in the sense of traces.

  The functions $\Lambda_\omega\,, \Lambda_{\gamma_t} : {\rm L}^2 (\omega, \textrm{SO}(3))\rightarrow\mathbb{R} $ are expressed in terms of the loads from the three-dimensional parental variational problem, see \cite{GhibaNeffPartI}, and they are assumed to be continuous and bounded operators.

\begin{remark}
Our model \cite{GhibaNeffPartI}   is constructed  under the following assumptions upon the thickness
$$
{h}\,|{\kappa_1}|<\frac{1}{2}\qquad \text{and}\qquad  {h}\,|{\kappa_2}|<\frac{1}{2},
$$
 {where  ${\kappa_1}$ and ${\kappa_2}$ denote  the principal curvatures of the initial underformed surface}.
\end{remark}

We will consider  materials for  which the Poisson ratio  $\nu=\frac{\lambda}{2(\lambda+\mu)}$ and Young's modulus $ E=\frac{\mu(3\,\lambda+2\,\mu)}{\lambda+\mu}$ are such that
$
-\frac{1}{2}<\nu<\frac{1}{2}$ \text{and} $E>0\, .
$
This assumption implies that  $2\,\lambda+\mu>0$.
Under these assumptions on the constitutive coefficients, together with the positivity of $\mu$, $\mu_{\rm c}$, $b_1$, $b_2$ and $b_3$, and the orthogonal Cartan-decomposition  of the Lie-algebra
$
\mathfrak{gl}(3)$, with
\begin{align}\label{e78}
{W}_{\mathrm{shell}}( S)
& \,=\,   \mu\, \lVert  \mathrm{  dev \,sym} \,S\rVert ^2  +  \mu_{\rm c} \lVert  \mathrm{skew}   \,S\rVert ^2 +\,\dfrac{2\,\mu\,(2\,\lambda+\mu)}{3\,(\lambda+2\,\mu)}\,[\mathrm{tr}  (S)]^2,
\end{align}
it follows that there exists the positive constants  $c_1^+, c_2^+, C_1^+$ and $C_2^+$  such that
\begin{align}\label{pozitivdef}
C_1^+ \lVert S\rVert ^2\geq\, W_{\mathrm{shell}}(  S) \geq\, c_1^+ \lVert S\rVert ^2,\qquad \quad
 C_2^+ \lVert S \rVert ^2 \geq\, W_{\mathrm{curv}}(  S )
\geq\,  c_2^+ \lVert S \rVert ^2\qquad \forall\, S\in \mathbb{R}^{3\times 3}.
\end{align}
Hence,
we note
\begin{align}\label{mpmix}
W_{\mathrm{mp}}(  S )&=
{W}_{\mathrm{shell}}( S )+ \,\dfrac{\lambda^2}{2(\lambda+2\,\mu)}\,(\mathrm{tr}  (S ))^2\geq\, {W}_{\mathrm{shell}}(  S )\geq\, c_1^+ \lVert S \rVert ^2.
\end{align}

\section{Existence of minimizers for the Cosserat shell model of order $O(h^5)$}\setcounter{equation}{0}

In order to establish an existence result by the direct methods of the calculus of variations, we need to show the coercivity  of the  elastically stored  shell energy density.

\subsection{Coercivity and  uniform convexity in the theory of order $O(h^5)$}

\begin{proposition}\label{propcoerh5} {\rm [Coercivity in the theory including terms up to order $O(h^5)$]} For sufficiently small values of the thickness $h$ such that  $h|\kappa_1|<\frac{1}{2}$ and $h|\kappa_2|<\frac{1}{2}$
	and for constitutive coefficients  satisfying  $\mu>0, \,\mu_{\rm c}>0$, $2\,\lambda+\mu> 0$, $b_1>0$, $b_2>0$ and $b_3>0$,   the  energy density
	\begin{align}W(\mathcal{E}_{m,s}, \mathcal{K}_{e,s})=W_{\mathrm{memb}}\big(  \mathcal{E}_{m,s} \big)+W_{\mathrm{memb,bend}}\big(  \mathcal{E}_{m,s} ,\,  \mathcal{K}_{e,s} \big)+W_{\mathrm{bend,curv}}\big(  \mathcal{K}_{e,s}    \big)
	\end{align}
	is coercive in the sense that  there exists a constant   $a_1^+>0$  such that
	\begin{equation}\label{26bis}
	W(\mathcal{E}_{m,s}, \mathcal{K}_{e,s})\,\geq\, a_1^+\, \big( \lVert \mathcal{E}_{m,s}\rVert ^2 + \lVert \mathcal{K}_{e,s}\rVert ^2\,\big) ,
	\end{equation}
		where
	$a_1^+$ depends on the constitutive coefficients.
\end{proposition}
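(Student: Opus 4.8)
The plan is to reorganize $W(\mathcal{E}_{m,s},\mathcal{K}_{e,s})$ as a sum of two ``good'' positive-definite quadratic contributions---one controlling $\mathcal{E}_{m,s}$, one controlling $\mathcal{K}_{e,s}$---together with further manifestly nonnegative terms, in such a way that the coefficients of the good terms remain bounded away from zero already under the modelling hypotheses $h|\kappa_1|<\tfrac12$ and $h|\kappa_2|<\tfrac12$. As a preliminary step I would record the elementary consequences $|{\rm K}|\,h^2=(h|\kappa_1|)(h|\kappa_2|)<\tfrac14$ and $h|{\rm H}|\leq\tfrac12(h|\kappa_1|+h|\kappa_2|)<\tfrac12$, which render all thickness weights occurring in \eqref{e90} strictly positive, for instance
\[
h+{\rm K}\,\tfrac{h^3}{12}\ \geq\ \tfrac{47}{48}\,h,\qquad h-{\rm K}\,\tfrac{h^3}{12}\ \geq\ \tfrac{47}{48}\,h,\qquad \tfrac{h^3}{12}-{\rm K}\,\tfrac{h^5}{80}\ \geq\ \tfrac{77}{960}\,h^3 .
\]
I would also use that, by \eqref{pozitivdef}, $\mathcal{W}_{\mathrm{shell}}(\cdot,\cdot)$ is a genuine inner product on $\mathbb{R}^{3\times3}$ (indeed $\mathcal{W}_{\mathrm{shell}}(S,S)=W_{\mathrm{shell}}(S)\geq c_1^+\lVert S\rVert^2$), so that the Cauchy--Schwarz inequality $|\mathcal{W}_{\mathrm{shell}}(S,T)|\leq\sqrt{W_{\mathrm{shell}}(S)}\,\sqrt{W_{\mathrm{shell}}(T)}$ is available, together with $W_{\mathrm{mp}}(S)\geq W_{\mathrm{shell}}(S)$ from \eqref{mpmix}.

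The bending--curvature part is then immediate: since all three weights in $W_{\mathrm{bend,curv}}(\mathcal{K}_{e,s})$ are positive by the above, each of its summands is nonnegative, and keeping only the first one gives, via \eqref{pozitivdef}, the estimate $W_{\mathrm{bend,curv}}(\mathcal{K}_{e,s})\geq\tfrac{47}{48}\,h\,c_2^+\,\lVert\mathcal{K}_{e,s}\rVert^2$, which already provides the required coercivity in the curvature measure.

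The heart of the matter is the membrane and membrane--bending part. Writing $T:=\mathcal{E}_{m,s}\,{\rm B}_{y_0}+{\rm C}_{y_0}\,\mathcal{K}_{e,s}$, I would estimate the two indefinite bilinear terms of $W_{\mathrm{memb,bend}}$ by Cauchy--Schwarz for the inner product $\mathcal{W}_{\mathrm{shell}}$ and by $W_{\mathrm{mp}}\geq W_{\mathrm{shell}}$, so that $W_{\mathrm{memb}}+W_{\mathrm{memb,bend}}$ is bounded below by the quadratic form
\[
\alpha\,a^2+\beta\,b^2+\gamma\,c^2-p\,ab-q\,ac,\qquad a:=\sqrt{W_{\mathrm{shell}}(\mathcal{E}_{m,s})},\ \ b:=\sqrt{W_{\mathrm{shell}}(T)},\ \ c:=\sqrt{W_{\mathrm{mp}}(T\,{\rm B}_{y_0})},
\]
with $\alpha=h+{\rm K}\tfrac{h^3}{12}$, $\beta=\tfrac{h^3}{12}-{\rm K}\tfrac{h^5}{80}$, $\gamma=\tfrac{h^5}{80}$, $p=\tfrac{h^3}{3}|{\rm H}|$, $q=\tfrac{h^3}{6}$. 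Completing the square in $b$ and in $c$ leaves $\big(\alpha-\tfrac{p^2}{4\beta}-\tfrac{q^2}{4\gamma}\big)a^2$ up to nonnegative terms; using the first-step bounds one has $\tfrac{q^2}{4\gamma}=\tfrac59\,h$ and $\tfrac{p^2}{4\beta}\leq\tfrac{240}{693}\,h^3|{\rm H}|^2<\tfrac{240}{693}\cdot\tfrac{h}{4}=\tfrac{20}{231}\,h$, so that $\delta:=\alpha-\tfrac{p^2}{4\beta}-\tfrac{q^2}{4\gamma}\geq\big(\tfrac{47}{48}-\tfrac{20}{231}-\tfrac59\big)h>0$. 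Hence $W_{\mathrm{memb}}+W_{\mathrm{memb,bend}}\geq\delta\,W_{\mathrm{shell}}(\mathcal{E}_{m,s})\geq\delta\,c_1^+\lVert\mathcal{E}_{m,s}\rVert^2$, and adding the curvature estimate establishes \eqref{26bis} with $a_1^+:=\min\{\delta\,c_1^+,\ \tfrac{47}{48}\,h\,c_2^+\}>0$, which indeed depends on the constitutive coefficients through $c_1^+,c_2^+$.

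The step I expect to be the genuine obstacle is keeping $\delta>0$ \emph{without} an extra smallness assumption on $h$. This is precisely where the $O(h^5)$ term $\tfrac{h^5}{80}\,W_{\mathrm{mp}}(T\,{\rm B}_{y_0})$ is indispensable: it keeps $\tfrac{q^2}{4\gamma}$ of order $h$, comparable with the membrane coefficient $\alpha$, rather than forcing a relation between the thickness $h$ and the characteristic length $L_{\rm c}$, as happens in the $O(h^3)$ truncation. It is worth noting that the argument needs no bounds on $\lVert{\rm B}_{y_0}\rVert$ or $\lVert{\rm C}_{y_0}\rVert$, since the arguments $T$ and $T\,{\rm B}_{y_0}$ of the indefinite terms match exactly those of the positive ones; only the curvature smallness $h|\kappa_1|,h|\kappa_2|<\tfrac12$ enters the proof of coercivity.
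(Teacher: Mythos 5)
Your proof is correct and follows essentially the same strategy as the paper's: preliminary curvature bounds from $h|\kappa_i|<\tfrac12$, Cauchy--Schwarz on the two mixed terms via the inner product $\mathcal{W}_{\mathrm{shell}}$, the inequality $W_{\mathrm{mp}}\geq W_{\mathrm{shell}}$, and absorption of the cross terms into the positive diagonal terms. Your completion of the square in $b$ and $c$ is simply the optimized version of the paper's weighted arithmetic--geometric mean step (where the paper fixes $\delta=8$, $\varepsilon=2$), and both routes hinge on the presence of the $O(h^5)$ term $\tfrac{h^5}{80}\,W_{\mathrm{mp}}(T\,{\rm B}_{y_0})$ to keep the residual coefficient of $W_{\mathrm{shell}}(\mathcal{E}_{m,s})$ positive without further restricting $h$.
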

\begin{proof}
	In order to prove the coercivity note that the principal curvatures $\kappa_1,\kappa_2$  are the solutions of
	the characteristic equation of
	${\rm L}_{y_0}$, i.e., $\kappa^2-{\rm tr}({{\rm L}_{y_0}})\,\kappa+{\rm det}{({\rm L}_{y_0})}\,=\,\kappa^2-2\,{\rm H}\,\kappa+{\rm K}\,=\,0.$
Therefore,	 from the assumptions
$
	h\, |\kappa_1|<\frac{1}{2},$ $ \  h\, |\kappa_2|<\frac{1}{2},
	$
	it follows that
	\begin{align}\label{condition}
	h^2|K|=h^2\, |\kappa_1|\,|\kappa_2|<\frac{1}{4}\qquad \text{and}\qquad 2\,h\, |H|=h\, |\kappa_1+\kappa_2|<1.
	\end{align}

	Therefore,
	$
	h-{\rm K}\,\frac{h^3}{12}>0 \ \ \textrm{and}  \   \ \frac{h^3}{12}\,-{\rm K}\,\frac{h^5}{80}>0
	$ and
		\begin{align}
	W(\mathcal{E}_{m,s}, \mathcal{K}_{e,s})
	\geq\,&\Big(h+{\rm K}\,\dfrac{h^3}{12}\Big)\,
		W_{\mathrm{shell}}\big(    \mathcal{E}_{m,s} \big)+\Big(\dfrac{h^3}{12}\,-{\rm K}\,\dfrac{h^5}{80}\Big)\,
		W_{\mathrm{shell}}  \big(   \mathcal{E}_{m,s} \, {\rm B}_{y_0} +   {\rm C}_{y_0} \mathcal{K}_{e,s} \big)\notag\\&-\dfrac{h^3}{3}\,|\mathrm{H}|\,
		\,|\mathcal{W}_{\mathrm{shell}}  \big(  \mathcal{E}_{m,s} ,
		\mathcal{E}_{m,s}{\rm B}_{y_0}+{\rm C}_{y_0}\, \mathcal{K}_{e,s} \big)|-\dfrac{h^3}{12}\,\,2\,  |\mathcal{W}_{\mathrm{shell}}  \big(  \mathcal{E}_{m,s} ,
		( \mathcal{E}_{m,s}{\rm B}_{y_0}+{\rm C}_{y_0}\, \mathcal{K}_{e,s}){\rm B}_{y_0} \big)|\notag\\&+ \,\dfrac{h^5}{80}\,\,
		W_{\mathrm{mp}} \big((  \mathcal{E}_{m,s} \, {\rm B}_{y_0} +  {\rm C}_{y_0} \mathcal{K}_{e,s} )   {\rm B}_{y_0} \,\big)+\Big(h-{\rm K}\,\dfrac{h^3}{12}\Big)\,
		W_{\mathrm{curv}}\big(  \mathcal{K}_{e,s} \big).
		\end{align}
		Using the Cauchy--Schwarz inequality we deduce
		\begin{align}
		W(\mathcal{E}_{m,s}, \mathcal{K}_{e,s})
			\geq\,&\Big(h+{\rm K}\,\dfrac{h^3}{12}\Big)\,
		W_{\mathrm{shell}}\big(    \mathcal{E}_{m,s} \big)-
		\dfrac{1}{3}\,|\mathrm{H}|\,  \left[h^2\,{W}_{\mathrm{shell}}  \big(  \mathcal{E}_{m,s}\big)\right]^{\frac{1}{2}}\, \left[h^4\,{W}_{\mathrm{shell}}  \big(
		\mathcal{E}_{m,s}{\rm B}_{y_0}+{\rm C}_{y_0}\, \mathcal{K}_{e,s}  \big)\right]^{\frac{1}{2}}\notag\\&+\Big(\dfrac{h^3}{12}\,-{\rm K}\,\dfrac{h^5}{80}\Big)\,
		W_{\mathrm{shell}}  \big(   \mathcal{E}_{m,s} \, {\rm B}_{y_0} +   {\rm C}_{y_0} \mathcal{K}_{e,s} \big)\notag\\&-\dfrac{1}{6}\, \Big[h\,{W}_{\mathrm{shell}}  \big(  \mathcal{E}_{m,s}\big)]^{\frac{1}{2}}
		\, \Big[h^5{W}_{\mathrm{shell}}  \big( ( \mathcal{E}_{m,s}{\rm B}_{y_0}+{\rm C}_{y_0}\, \mathcal{K}_{e,s}){\rm B}_{y_0} \big)]^{\frac{1}{2}}
	\\&+ \,\dfrac{h^5}{80}\,\,
		W_{\mathrm{mp}} \big((  \mathcal{E}_{m,s} \, {\rm B}_{y_0} +  {\rm C}_{y_0} \mathcal{K}_{e,s} )   {\rm B}_{y_0} \,\big)+\Big(h-{\rm K}\,\dfrac{h^3}{12}\Big)\,
		W_{\mathrm{curv}}\big(  \mathcal{K}_{e,s} \big).\notag
	\end{align}
The	arithmetic-geometric mean inequality leads to the estimate
	\begin{align}
W(\mathcal{E}_{m,s}, \mathcal{K}_{e,s})
			\geq\,&\Big(h+{\rm K}\,\dfrac{h^3}{12}-
		\dfrac{h^2}{6}\varepsilon\,|\mathrm{H}|\Big)\,  {W}_{\mathrm{shell}}  \big(  \mathcal{E}_{m,s}\big)+\Big(\dfrac{h^3}{12}\,-{\rm K}\,\dfrac{h^5}{80}-\dfrac{h^4}{6\, \varepsilon}\,\,|\mathrm{H}|\Big)\, {W}_{\mathrm{shell}}  \big(
		\mathcal{E}_{m,s}{\rm B}_{y_0}+{\rm C}_{y_0}\, \mathcal{K}_{e,s}  \big)\notag\\&-\dfrac{h}{12}\delta\,  {W}_{\mathrm{shell}}  \big(  \mathcal{E}_{m,s}\big)-
		\dfrac{h^5}{12\, \delta} {W}_{\mathrm{shell}}  \big( ( \mathcal{E}_{m,s}{\rm B}_{y_0}+{\rm C}_{y_0}\, \mathcal{K}_{e,s}){\rm B}_{y_0} \big)\\&+ \,\dfrac{h^5}{80}\,\,
		W_{\mathrm{mp}} \big((  \mathcal{E}_{m,s} \, {\rm B}_{y_0} +  {\rm C}_{y_0} \mathcal{K}_{e,s} )   {\rm B}_{y_0} \,\big)+\Big(h-{\rm K}\,\dfrac{h^3}{12}\Big)\,
		W_{\mathrm{curv}}\big(  \mathcal{K}_{e,s} \big)\notag
	\quad \forall\, \varepsilon>0\  \text{and} \ \delta>0.
	\end{align}
	Using \eqref{mpmix}, we obtain
	\begin{align}
W(\mathcal{E}_{m,s}, \mathcal{K}_{e,s})
\geq\,&
	\Big(h-\dfrac{h}{12}\delta+{\rm K}\,\dfrac{h^3}{12}-
	\dfrac{h^2}{6}\varepsilon\,|\mathrm{H}|\Big)\,{W}_{\mathrm{shell}}  \big(  \mathcal{E}_{m,s}\big)\notag\\&+\Big(\dfrac{h^3}{12}\,-{\rm K}\,\dfrac{h^5}{80}-\dfrac{h^4}{6\, \varepsilon}\,\,|\mathrm{H}|\Big)\, {W}_{\mathrm{shell}}  \big(
	\mathcal{E}_{m,s}{\rm B}_{y_0}+{\rm C}_{y_0}\, \mathcal{K}_{e,s}  \big)\notag\\&+ \Big(\,\dfrac{h^5}{80}-
	\dfrac{h^5}{12\, \delta}\Big)\,\,
	W_{\mathrm{shell}} \big((  \mathcal{E}_{m,s} \, {\rm B}_{y_0} +  {\rm C}_{y_0} \mathcal{K}_{e,s} )   {\rm B}_{y_0} \,\big)\\&+\Big(h-{\rm K}\,\dfrac{h^3}{12}\Big)\,
	W_{\mathrm{curv}}\big(  \mathcal{K}_{e,s} \big)\qquad \  \forall\, \varepsilon>0\  \text{ and } \ \  \delta>0\notag.
	\end{align}
Taking $\delta=8$ and	 and $\varepsilon=2$ we get\footnote{This step cannot be repeated in the proof of the coercivity up to order $O(h^3)$, since  $W_{\mathrm{shell}} \big((  \mathcal{E}_{m,s} \, {\rm B}_{y_0} +  {\rm C}_{y_0} \mathcal{K}_{e,s} )   {\rm B}_{y_0} \,\big)$ cannot be skipped. This is the reason why we  have to choose another strategy  to obtain the desired estimates.} that
	\begin{align}
	W(\mathcal{E}_{m,s}, \mathcal{K}_{e,s})
	\geq\,&h\,\Big[\dfrac{1}{3}-{\rm K}\,\dfrac{h^2}{12}-
	\dfrac{h}{3}\,|\mathrm{H}|\Big]\,  {W}_{\mathrm{shell}}  \big(  \mathcal{E}_{m,s}\big)+\dfrac{h^3}{12}\Big(1\,-|{\rm K}|\,\dfrac{12\,h^2}{80}-h\,|\mathrm{H}|\Big)\, {W}_{\mathrm{shell}}  \big(
	\mathcal{E}_{m,s}{\rm B}_{y_0}+{\rm C}_{y_0}\, \mathcal{K}_{e,s}  \big)\notag\\&+\Big(h-|{\rm K}|\,\dfrac{h^3}{12}\Big)\,
	W_{\mathrm{curv}}\big(  \mathcal{K}_{e,s} \big).
	\end{align}
	In view of \eqref{condition} and \eqref{pozitivdef}, we deduce
	\begin{align}
W(\mathcal{E}_{m,s}, \mathcal{K}_{e,s})
\geq\,&h\,\dfrac{7}{48}\,  {W}_{\mathrm{shell}}  \big(  \mathcal{E}_{m,s}\big)+\dfrac{h^3}{12}\,\dfrac{37}{80}\, {W}_{\mathrm{shell}}  \big(
	\mathcal{E}_{m,s}{\rm B}_{y_0}+{\rm C}_{y_0}\, \mathcal{K}_{e,s}  \big)+h\,\dfrac{47}{48}\,
	W_{\mathrm{curv}}\big(  \mathcal{K}_{e,s} \big)\notag\\
	\geq \,&h\,\dfrac{7}{48}\, c_1^+\, \lVert   \mathcal{E}_{m,s}\rVert ^2+\dfrac{h^3}{12}\,\dfrac{37}{80}\,c_1^+\, \lVert
	\mathcal{E}_{m,s}{\rm B}_{y_0}+{\rm C}_{y_0}\, \mathcal{K}_{e,s}  \rVert ^2+h\,\dfrac{47}{48}\,  { c_2^+}
\lVert   \mathcal{K}_{e,s} \rVert ^2.
	\end{align}
The desired constant  $a_1^+$ from the conclusion can be chosen as $a_1^+=\min\big\{h\,\dfrac{7}{48}\, c_1^+,h\,\dfrac{47}{48}\,  {c_2^+} \big\}$.
\end{proof}

\begin{corollary}\label{corconh5}{\rm [Uniform convexity in the theory including terms up to order $O(h^5)$]} For sufficiently small values of the thickness $h$ such that  $h|\kappa_1|<\frac{1}{2}$ and $h|\kappa_2|<\frac{1}{2}$
	and for constitutive coefficients  such that $\mu>0, \,\mu_{\rm c}>0$, $2\,\lambda+\mu> 0$, $b_1>0$, $b_2>0$ and $b_3>0$,   the  energy density
	\begin{align}W(\mathcal{E}_{m,s}, \mathcal{K}_{e,s})=W_{\mathrm{memb}}\big(  \mathcal{E}_{m,s} \big)+W_{\mathrm{memb,bend}}\big(  \mathcal{E}_{m,s} ,\,  \mathcal{K}_{e,s} \big)+W_{\mathrm{bend,curv}}\big(  \mathcal{K}_{e,s}    \big)
	\end{align}
	is uniformly convex in $(\mathcal{E}_{m,s}, \mathcal{K}_{e,s})$, i.e., there exists a constant $a_1^+>0$ such that
	\begin{align}
	D^2\,W(\mathcal{E}_{m,s},\mathcal{K}_{e,s}).\,[(H_1,H_2),(H_1,H_2)]\geq a_1^+(\lVert H_1\rVert^2+\lVert H_2\rVert^2) \qquad \forall\, H_1,H_2\in \mathbb{R}^{3\times 3}.
	\end{align}
\end{corollary}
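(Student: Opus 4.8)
The plan is to deduce uniform convexity directly from the coercivity already established in Proposition~\ref{propcoerh5}, exploiting the fact that the energy density $W(\mathcal{E}_{m,s},\mathcal{K}_{e,s})$ is a \emph{quadratic form} in the pair $(\mathcal{E}_{m,s},\mathcal{K}_{e,s})$. Indeed, each constituent $W_{\mathrm{shell}}$, $W_{\mathrm{mp}}$, $W_{\mathrm{curv}}$ and the bilinear form $\mathcal{W}_{\mathrm{shell}}$ is (bi)quadratic in its arguments, and the arguments appearing in \eqref{e90} — namely $\mathcal{E}_{m,s}$, $\mathcal{E}_{m,s}{\rm B}_{y_0}+{\rm C}_{y_0}\mathcal{K}_{e,s}$, $(\mathcal{E}_{m,s}{\rm B}_{y_0}+{\rm C}_{y_0}\mathcal{K}_{e,s}){\rm B}_{y_0}$, $\mathcal{K}_{e,s}$, $\mathcal{K}_{e,s}{\rm B}_{y_0}$, $\mathcal{K}_{e,s}{\rm B}_{y_0}^2$ — are all \emph{linear} in $(\mathcal{E}_{m,s},\mathcal{K}_{e,s})$ (the tensors ${\rm B}_{y_0}$, ${\rm C}_{y_0}$ being fixed, depending only on the initial geometry). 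Hence $W$ is a homogeneous quadratic form $W(\mathcal{E},\mathcal{K})=\mathcal{Q}[(\mathcal{E},\mathcal{K}),(\mathcal{E},\mathcal{K})]$ for a fixed symmetric bilinear form $\mathcal{Q}$ on $\mathbb{R}^{3\times 3}\times\mathbb{R}^{3\times 3}$.

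First I would record the elementary fact that for a homogeneous quadratic form $\mathcal{Q}[z,z]$ on a finite-dimensional space one has $D^2(\mathcal{Q}[z,z]).[v,v] = 2\,\mathcal{Q}[v,v]$, i.e.\ the Hessian is constant and equals (twice) the form itself, independently of the base point $z$. Therefore the statement
\begin{align*}
D^2 W(\mathcal{E}_{m,s},\mathcal{K}_{e,s}).[(H_1,H_2),(H_1,H_2)] \;\geq\; a_1^+\big(\lVert H_1\rVert^2+\lVert H_2\rVert^2\big)
\end{align*}
is, up to the harmless factor $2$ that can be absorbed into the constant, \emph{equivalent} to the coercivity inequality \eqref{26bis} applied to the pair $(H_1,H_2)$ in place of $(\mathcal{E}_{m,s},\mathcal{K}_{e,s})$. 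Concretely: since $W$ is quadratic, $\tfrac12 D^2 W(\cdot).[(H_1,H_2),(H_1,H_2)] = W(H_1,H_2) \geq a_1^+(\lVert H_1\rVert^2+\lVert H_2\rVert^2)$ by Proposition~\ref{propcoerh5}, because $(H_1,H_2)$ is an arbitrary pair of $3\times3$ matrices and the hypotheses on $h$ and the constitutive coefficients are exactly those of the proposition. This yields the claim with the very same constant $a_1^+$ (or $2a_1^+$, according to bookkeeping).

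The only point that genuinely needs to be verified — and which I would spell out carefully — is that all the maps $(\mathcal{E},\mathcal{K})\mapsto \mathcal{E}{\rm B}_{y_0}+{\rm C}_{y_0}\mathcal{K}$, etc., are linear and that the coefficients $h+{\rm K}\,h^3/12$, $h^3/12-{\rm K}\,h^5/80$, $h^5/80$, $h^3/3\,{\rm H}$, $h^3/6$, $h-{\rm K}\,h^3/12$ are constants on $\omega$ in the pointwise sense (they depend on $x_1,x_2$ only through the fixed initial geometry), so that $W$ really is, at each point of $\omega$, a fixed quadratic form in the two tensor variables over which we differentiate. Once this is observed the computation of $D^2 W$ is immediate. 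I do not expect any serious obstacle here; the content of the corollary is essentially a reformulation of Proposition~\ref{propcoerh5}, the mild subtlety being only the (standard) identification of coercivity of a quadratic energy with uniform convexity of that energy. I would therefore present the proof as a short two-line argument invoking the quadratic structure of \eqref{e90}--\eqref{quadraticforms} and citing Proposition~\ref{propcoerh5} verbatim.
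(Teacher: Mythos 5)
Your argument is correct and matches the paper's proof in substance: both observe that $W$ is a homogeneous quadratic form in $(\mathcal{E}_{m,s},\mathcal{K}_{e,s})$, so that $D^2 W(\cdot).[(H_1,H_2),(H_1,H_2)]=2\,W(H_1,H_2)$, and then invoke the coercivity bound from Proposition~\ref{propcoerh5} with $(H_1,H_2)$ in place of $(\mathcal{E}_{m,s},\mathcal{K}_{e,s})$. The only cosmetic difference is that the paper verifies the identity $D^2W=2W$ explicitly on two representative terms, whereas you state it as a general fact about quadratic forms; both are valid and equivalent.
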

\begin{proof}
	For a bilinear expression   $W(\mathcal{E}_{m,s}, \mathcal{K}_{e,s})$    in terms of $\mathcal{E}_{m,s}$ and $\mathcal{K}_{e,s}$, the second derivative  with respect
	to these argument variables coincides with the function itself, modulo a scalar multiplication.
	We will prove this known fact  only for two terms of the energy and we show that
\begin{align}\label{dervu}
& D^2(\lVert \sym\, \mathcal{E}_{m,s} \rVert^2).\,[(H_1,H_2),(H_1,H_2)]=2\,\lVert \sym\, H_1 \rVert^2
\qquad \text{and}\\
& D^2(\bigl\langle \sym\, \mathcal{E}_{m,s},\sym(\mathcal{E}_{m,s}{\rm B}_{y_0}+{\rm C}_{y_0}\, \mathcal{K}_{e,s})\bigr\rangle).\,[(H_1,H_2),(H_1,H_2)]=2\,\bigl\langle \sym\, H_1,\sym(H_1\,{\rm B}_{y_0}+{\rm C}_{y_0}\, H_2)\bigr\rangle.\notag
\end{align}
	Indeed, on the one hand, we have $D_F(\lVert F \rVert^2).\, H=2\,\langle\, F, H\,\rangle$, $\bigl\langle D^2_F(\lVert F \rVert^2).\, H,H\bigr\rangle=2\,\lVert  H\rVert^2$. Useful in our calculation is  that
	\begin{align}
		D^2\,W(\mathcal{E}_{m,s}, \mathcal{K}_{e,s}).\,[(H_1,H_2),(H_1,H_2)]=\,& D^2_{\mathcal{E}_{m,s}, \mathcal{E}_{m,s}}\,W(\mathcal{E}_{m,s}, \mathcal{K}_{e,s}).\,(H_1,H_1)\notag\\&+2\,  D_{ \mathcal{K}_{e,s}}[D_{\mathcal{E}_{m,s}}\,W(\mathcal{E}_{m,s}, \mathcal{K}_{e,s}).(H_1,H_1)].(H_2,H_2)\notag\\ &+ D^2_{\mathcal{K}_{e,s}, \mathcal{K}_{e,s}}\,W(\mathcal{E}_{m,s}, \mathcal{K}_{e,s}).\,(H_2,H_2)\notag.
	\end{align}
Since $\sym:\mathbb{R}^{3\times 3}\to {\rm Sym}(3)$ is a linear operator, we obtain
\begin{align}
&D^2(\lVert \sym\, \mathcal{E}_{m,s} \rVert^2).\,[(H_1,H_2),(H_1,H_2)]= D^2_{\mathcal{E}_{m,s}, \mathcal{E}_{m,s}}\,(\lVert \sym\, \mathcal{E}_{m,s} \rVert^2).\,(H_1,H_1)=2\,\lVert \sym\, H_1 \rVert^2,
\end{align}
which proves \eqref{dervu}$_1$.
 On the other hand,
	it holds
	\begin{align}
 D^2_{\mathcal{E}_{m,s}, \mathcal{E}_{m,s}}(\bigl\langle &\sym\, \mathcal{E}_{m,s},\sym(\mathcal{E}_{m,s}{\rm B}_{y_0}+{\rm C}_{y_0}\, \mathcal{K}_{e,s})\bigr\rangle).\,(H_1,H_1)=2\,
\bigl\langle \sym\, H_1,H_1{\rm B}_{y_0}\bigr\rangle,\notag\\
 D^2_{\mathcal{K}_{e,s}, \mathcal{K}_{e,s}}(\bigl\langle &\sym\, \mathcal{E}_{m,s},\sym(\mathcal{E}_{m,s}{\rm B}_{y_0}+{\rm C}_{y_0}\, \mathcal{K}_{e,s})\bigr\rangle).\,(H_2,H_2)=0,\\
D_{ \mathcal{K}_{e,s}}[D_{\mathcal{E}_{m,s}}\,(\bigl\langle &\sym\, \mathcal{E}_{m,s},\sym(\mathcal{E}_{m,s}{\rm B}_{y_0}+{\rm C}_{y_0}\, \mathcal{K}_{e,s})\bigr\rangle).(H_1,H_1)].(H_2,H_2)=\bigl\langle \sym\, H_1,{\rm C}_{y_0}\, H_2\bigr\rangle.\notag
\end{align}
	Therefore
\begin{align}
&D^2(\bigl\langle \sym\, \mathcal{E}_{m,s},\sym(\mathcal{E}_{m,s}{\rm B}_{y_0}+{\rm C}_{y_0}\, \mathcal{K}_{e,s})\bigr\rangle).\,[(H_1,H_2),(H_1,H_2)]=2\,
\bigl\langle \sym\, H_1,\sym(H_1{\rm B}_{y_0}+{\rm C}_{y_0}\, H_2)\bigr\rangle,
\end{align} which proves \eqref{dervu}$_2$.
In conclusion, after making similar calculations as above for the other terms appearing in the expression of $W(\mathcal{E}_{m,s},\mathcal{K}_{e,s})$, we obtain
\begin{align}
 D^2\,W(\mathcal{E}_{m,s},\mathcal{K}_{e,s}).\,[(H_1,H_2),(H_1,H_2)]=2\,W(H_1,H_2).
\end{align}
	Bounding the function $W(\mathcal{E}_{m,s},\mathcal{K}_{e,s})$ for all $\mathcal{E}_{m,s},\mathcal{K}_{e,s}\in \mathbb{R}^{3\times 3}$ away from zero amounts therefore to showing that
	$D^2W(\mathcal{E}_{m,s}, \mathcal{K}_{e,s})$
	is positive definite. Hence, the coercivity  of $W(\mathcal{E}_{m,s}, \mathcal{K}_{e,s})$ expressed by Proposition \ref{propcoerh5} implies uniform convexity in the chosen
	variables.
\end{proof}

\subsection{The existence result in the theory of order $O(h^5)$}

In this section, we prove the first main result of our paper.
The  admissible set $\mathcal{A}$ of solutions is defined by
\begin{equation}\label{21}
\mathcal{A}=\big\{(m,\overline{Q}_{e,s})\in{\rm H}^1(\omega, \mathbb{R}^3)\times{\rm H}^1(\omega, {\rm SO}(3))\,\,\big|\,\,\,  m\big|_{ \gamma_d}=m^*, \,\,\overline{Q}_{e,s}\big|_{ \gamma_d}=\overline{Q}_{e,s}^* \big\},
\end{equation}
where the boundary conditions are to be understood in the sense of traces.
\begin{theorem}\label{th1}{\rm [Existence result for the theory including terms up to order $O(h^5)$]}
	Assume that the external loads satisfy the conditions
	\begin{equation}\label{24}
	{f}\in\textrm{\rm L}^2(\omega,\mathbb{R}^3),\qquad  t\in \textrm{\rm L}^2(\gamma_t,\mathbb{R}^3),
	\end{equation}
	and the boundary data satisfy the conditions
	\begin{equation}\label{25}
	{m}^*\in{\rm H}^1(\omega ,\mathbb{R}^3),\qquad \overline{Q}_{e,s}^*\in{\rm H}^1(\omega, {\rm SO}(3)).
	\end{equation}
	Assume that the following conditions concerning the initial configuration are satisfied: $\,y_0:\omega\subset\mathbb{R}^2\rightarrow\mathbb{R}^3$ is a continuous injective mapping and
	\begin{align}\label{26}
{y}_0&\in{\rm H}^1(\omega ,\mathbb{R}^3),\qquad   {Q}_{0}(0)\in{\rm H}^1(\omega, {\rm SO}(3)),\qquad
	\nabla_x\Theta(0)\in {\rm L}^\infty(\omega ,\mathbb{R}^{3\times 3}),\qquad \det[\nabla_x\Theta(0)] \geq\, a_0 >0\,,
\end{align}
	where $a_0$ is a constant.
	Then, for sufficiently small values of the thickness $h$ such that  $h|\kappa_1|<\frac{1}{2}$ and $h|\kappa_2|<\frac{1}{2}$
	and for constitutive coefficients  such that $\mu>0, \,\mu_{\rm c}>0$, $2\,\lambda+\mu> 0$, $b_1>0$, $b_2>0$ and $b_3>0$, the minimization problem \eqref{e89}--\eqref{boundary} admits at least one minimizing solution pair
	$(m,\overline{Q}_{e,s})\in  \mathcal{A}$.
\end{theorem}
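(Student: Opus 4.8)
The plan is to apply the direct method of the calculus of variations. First I would observe that the admissible set $\mathcal{A}$ is nonempty (it contains, e.g., the reference configuration $(y_0,Q_0(0))$ restricted appropriately, given the hypotheses \eqref{26}) and that the functional $I$ is bounded below on $\mathcal{A}$: by Proposition \ref{propcoerh5} the stored energy controls $\lVert\mathcal{E}_{m,s}\rVert^2+\lVert\mathcal{K}_{e,s}\rVert^2$ from below, while the load potential $\overline{\Pi}$ is controlled linearly. The key coercivity estimate I would extract is that, modulo the boundary data, $I(m,\overline{Q}_{e,s})$ bounds $\lVert\nabla m\rVert_{{\rm L}^2}^2$ and $\lVert\nabla\overline{Q}_{e,s}\rVert_{{\rm L}^2}^2$ from below: since $\det[\nabla_x\Theta(0)]\geq a_0>0$ and $\nabla_x\Theta(0)\in{\rm L}^\infty$, the matrix $[\nabla_x\Theta(0)]^{-1}$ is bounded, hence $\lVert\mathcal{K}_{e,s}\rVert^2\geq c\,\lVert(\mathrm{axl}(\overline{Q}_{e,s}^T\partial_{x_1}\overline{Q}_{e,s})|\mathrm{axl}(\overline{Q}_{e,s}^T\partial_{x_2}\overline{Q}_{e,s})|0)\rVert^2$, and since $\overline{Q}_{e,s}\in{\rm SO}(3)$ is bounded this is comparable to $\lVert\nabla\overline{Q}_{e,s}\rVert^2$; similarly $\lVert\mathcal{E}_{m,s}\rVert^2$ together with the already-obtained bound on $\nabla\overline{Q}_{e,s}$ controls $\lVert\nabla m\rVert^2$. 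Combined with a Poincaré/Friedrichs inequality using the Dirichlet condition on $\gamma_d$, this gives an a priori ${\rm H}^1\times{\rm H}^1$ bound along any minimizing sequence.

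Next I would take a minimizing sequence $(m^{(k)},\overline{Q}_{e,s}^{(k)})\in\mathcal{A}$. By the coercivity just described and the boundedness of ${\rm SO}(3)$, the sequence is bounded in ${\rm H}^1(\omega,\mathbb{R}^3)\times{\rm H}^1(\omega,\mathbb{R}^{3\times3})$, so by reflexivity we may extract a subsequence with $m^{(k)}\rightharpoonup m$ in ${\rm H}^1$ and $\overline{Q}_{e,s}^{(k)}\rightharpoonup\overline{Q}_{e,s}$ in ${\rm H}^1$, with strong ${\rm L}^2$ (and, along a further subsequence, pointwise a.e.) convergence by Rellich. The pointwise convergence together with the closedness of ${\rm SO}(3)$ shows $\overline{Q}_{e,s}(x)\in{\rm SO}(3)$ a.e., so the limit rotation field lies in ${\rm H}^1(\omega,{\rm SO}(3))$; the traces are preserved by weak ${\rm H}^1$-convergence, so $(m,\overline{Q}_{e,s})\in\mathcal{A}$.

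The crucial step is weak lower semicontinuity of $I$ at the limit. I would write $\mathcal{E}_{m,s}$ and $\mathcal{K}_{e,s}$ as the sum of a term that is \emph{linear} in the gradients $(\nabla m,\nabla\overline{Q}_{e,s})$ multiplied by bounded coefficient fields, plus lower-order terms that depend only on $\overline{Q}_{e,s}$ itself (and $y_0$). Along the subsequence, the lower-order terms converge strongly in ${\rm L}^2$ by dominated convergence (using boundedness of $\overline{Q}_{e,s}^{(k)}$ and pointwise convergence), while the gradient-linear parts converge only weakly in ${\rm L}^2$; hence $\mathcal{E}_{m,s}^{(k)}\rightharpoonup\mathcal{E}_{m,s}$ and $\mathcal{K}_{e,s}^{(k)}\rightharpoonup\mathcal{K}_{e,s}$ weakly in ${\rm L}^2$. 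Since, by Corollary \ref{corconh5}, the integrand $W(\cdot,\cdot)$ is a (uniformly) convex quadratic form in $(\mathcal{E}_{m,s},\mathcal{K}_{e,s})$, the functional $\int_\omega W\,\det(\nabla y_0|n_0)\,{\rm d}a$ is convex and strongly continuous on ${\rm L}^2$, hence weakly lower semicontinuous; the load potential $\overline{\Pi}$ is weakly continuous because $f,t$ are ${\rm L}^2$ and $\Lambda_\omega,\Lambda_{\gamma_t}$ are continuous bounded operators on ${\rm L}^2(\omega,{\rm SO}(3))$ composed with the strongly convergent $\overline{Q}_{e,s}^{(k)}$, plus the trace theorem for the boundary term. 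Therefore $I(m,\overline{Q}_{e,s})\leq\liminf_k I(m^{(k)},\overline{Q}_{e,s}^{(k)})=\inf_{\mathcal{A}}I$, so the limit is a minimizer. The main obstacle I anticipate is the bookkeeping in this last step: one must verify carefully that every occurrence of $\nabla\overline{Q}_{e,s}$ in $\mathcal{E}_{m,s}$, $\mathcal{K}_{e,s}$ and their products ${\rm B}_{y_0},{\rm C}_{y_0}$-compositions enters only linearly with ${\rm L}^\infty$ coefficients (so that weak ${\rm L}^2$-convergence of the strain measures genuinely holds) and that the nonlinear dependence on $\overline{Q}_{e,s}$ without derivatives is handled by strong convergence — this is exactly where the Cosserat structure, which makes $\mathcal{E}_{m,s}$ linear in $\nabla m$ at fixed rotation, is essential and where a Kirchhoff--Love or Reissner--Mindlin model would fail.
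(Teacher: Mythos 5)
Your proposal is correct and follows essentially the same architecture as the paper's proof: coercivity and convexity from Proposition~\ref{propcoerh5} and Corollary~\ref{corconh5}, an a~priori ${\rm H}^1$ bound on an infimizing sequence, weak compactness, identification of the limits of the strain measures, and convexity-based weak lower semicontinuity. The main difference is in how you obtain the ${\rm H}^1$ bound on the rotation field. You argue directly: since $\overline{Q}_{e,s}^T\partial_{x_\alpha}\overline{Q}_{e,s}\in\mathfrak{so}(3)$ a.e.\ and $\lVert\mathrm{axl}(A)\rVert^2=\tfrac{1}{2}\lVert A\rVert^2$ for $A\in\mathfrak{so}(3)$, one has $\lVert\mathrm{axl}(\overline{Q}_{e,s}^T\partial_{x_\alpha}\overline{Q}_{e,s})\rVert^2=\tfrac{1}{2}\lVert\partial_{x_\alpha}\overline{Q}_{e,s}\rVert^2$, so that $\lVert\mathcal{K}_{e,s}\rVert^2$ controls $\lVert\nabla\overline{Q}_{e,s}\rVert^2$ pointwise up to a constant determined by $[\nabla_x\Theta(0)]^{-1}$. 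This is shorter than the route the paper takes, which passes from the boundedness of $\mathrm{axl}(\overline{Q}_k^T\partial_{x_\alpha}\overline{Q}_k)$ through the wryness tensor $\Gamma_k$, Nye's formula $\alpha_k=\overline{Q}_k^T\Curl\overline{Q}_k=-\Gamma_k^T+\tr(\Gamma_k)\,\id_3$, and the cited fact that the $\Curl$-operator bounds the gradient on ${\rm SO}(3)$. Both give the same conclusion; your route is more elementary and purely pointwise, while the paper's route, by invoking the dislocation density tensor, stays closer to the differential-geometric and micropolar-mechanical interpretation of the curvature measure and would also work in settings where only $\Curl$-control is available. You also establish that the limit takes values in ${\rm SO}(3)$ via pointwise a.e.\ convergence and closedness of ${\rm SO}(3)$, whereas the paper does so by an ${\rm L}^1$/${\rm L}^2$ product argument showing $\widehat{\overline{Q}}_{e,s}\widehat{\overline{Q}}_{e,s}^T=\id_3$; these are equivalent. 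The one place where your proposal is lighter on detail than the paper is the identification of the weak limits of $\mathcal{E}_{m,s}^{(k)}$ and $\mathcal{K}_{e,s}^{(k)}$: you correctly flag that one needs the weak-times-strong product structure (weak ${\rm L}^2$ convergence of the gradient, strong ${\rm L}^2$ convergence of the zeroth-order rotation, uniform ${\rm L}^\infty$ bound from compactness of ${\rm SO}(3)$), but the paper carries out the pairing against ${\rm C}_0^\infty$ test functions explicitly, which is the content that turns your remark about ``bookkeeping'' into a proof. Those computations are mechanical and match what you sketched, so there is no gap.
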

\begin{proof}
We employ the direct methods of the calculus of variations, similar to \cite{NeffBirsan13,neff2014existence,Neff_Habil04}. However, in comparison to \cite{NeffBirsan13}, due to the fact that we use only matrix notation, some steps   are shortened.   In Proposition \ref{propcoerh5}  and Corollary \ref{corconh5}, we have shown  that the strain energy density $W(\mathcal{E}_{m,s}, \mathcal{K}_{e,s})$ is  a quadratic convex and coercive function of $(\mathcal{E}_{m,s}, \mathcal{K}_{e,s})$.

The hypothesis \eqref{24} and the boundedness of $\,\,\Pi_{S^0}\,$ and $ \,\Pi_{\partial S^0_f}\,$ imply that there exists a constant $C>0$ such that\footnote{ {By $C$ and $C_i$, $i\in\mathbb{N}$, we will denote (positive) constants that may vary from estimate to estimate but will remain independent of $m$, $\nabla m$ and $\overline{Q}_{e,s}$.}}
\begin{equation*}
\begin{array}{l}
|\overline{\Pi}(m,\overline{Q}_{e,s})|\,\leq\,
C\,\Big(\lVert m-y_0\rVert_{{\rm L}^2(\omega)} + \lVert m-y_0\rVert_{{\rm L}^2(\gamma_t)} +\lVert \overline{Q}_{e,s}\rVert_{{\rm L}^2(\omega)}\Big)\, \quad \forall \, (m,\overline{Q}_{e,s})\in  {\rm H}^1(\omega, \mathbb{R}^3)\times{\rm H}^1(\omega, {\rm SO}(3)).
\end{array}
\end{equation*}

We have $
\label{R3}\lVert \overline{Q}_{e,s}\rVert ^2=\text{tr}(\overline{Q}_{e,s}  \overline{Q}_{e,s} ^T)=\tr(\id_3)=3, \  \,\forall\,\overline{Q}_{e,s}\in {\rm SO}(3).
$
 Hence,  there exists a constant $C>0$ such that
\begin{equation}\label{27}
|\,\overline{\Pi}(m,\overline{Q}_{e,s})\,|\,\leq\, \,\,C\,\big(\,\lVert m\rVert_{{\rm H}^1(\omega)}+1\big),\quad\forall\,(m,\overline{Q}_{e,s})\in  {\rm H}^1(\omega, \mathbb{R}^3)\times{\rm H}^1(\omega, {\rm SO}(3)).
\end{equation}
Considering
\begin{equation}
\overline{R}_s(x_1,x_2) =\overline{Q}_{e,s}(x_1,x_2)\,Q_0(x_1,x_2,0)\in{\rm SO}(3),
\end{equation}
we observe that
\begin{align}
\mathcal{E}_{m,s}  \,=& \,   Q_0  [\overline{R}_s^T (\nabla  m|\overline{Q}_{e,s}\nabla_x\Theta(0)\, e_3)- Q_0^T (\nabla  y_0|n_0)][\nabla_x \Theta(0)]^{-1}=\,Q_0( \overline{R}_s^T\,\nabla m -Q_0^T\nabla y_0 |0)[\nabla_x \Theta(0)]^{-1}.
\end{align}
The lifted quantity
$
\widehat{\rm I}_{y_0}\:\,=\,({\nabla  y_0}|n_0)^T({\nabla  y_0}|n_0)\in {\rm Sym}(3)$ is positive definite and  also it's inverse is positive definite. Using the above relation we obtain
\begin{align}\label{Elambda}
\lVert \mathcal{E}_{m,s}\rVert ^2& =\bigl\langle\,  Q_0^T\,Q_0( \overline{R}_s^T\,\nabla m -Q_0^T\nabla y_0 |0), ( \overline{R}_s^T\,\nabla m -Q_0^T\nabla y_0 |0)\widehat{\rm I}^{-1}_{y_0} \bigr\rangle \notag\\
& =\bigl\langle\,  \widehat{\rm I}^{-1}_{y_0}( \overline{R}_s^T\,\nabla m -Q_0^T\nabla y_0 |0)^T, ( \overline{R}_s^T\,\nabla m -Q_0^T\nabla y_0 |0)^T \bigr\rangle
\geq\, \lambda_0^2\,\lVert ( \overline{R}_s^T\,\nabla m -Q_0^T\nabla y_0 |0)\rVert ^2,
\end{align}
where $\lambda_0$ is the smallest eigenvalue of the positive definite matrix $\widehat{\rm I}^{-1}_{y_0}$.
Similarly, we deduce that
\begin{align}\lVert \mathcal{K}_{e,s} \rVert ^2
 &=\bigl\langle (\mathrm{axl}(\overline{Q}_{e,s}^T\,\partial_{x_1} \overline{Q}_{e,s})\,|\, \mathrm{axl}(\overline{Q}_{e,s}^T\,\partial_{x_2} \overline{Q}_{e,s})\,|0), (\mathrm{axl}(\overline{Q}_{e,s}^T\,\partial_{x_1} \overline{Q}_{e,s})\,|\, \mathrm{axl}(\overline{Q}_{e,s}^T\,\partial_{x_2} \overline{Q}_{e,s})\,|0)\,\widehat{\rm I}^{-1}_{y_0}\bigr\rangle\notag\\
 &=\bigl\langle \widehat{\rm I}^{-1}_{y_0}(\mathrm{axl}(\overline{Q}_{e,s}^T\,\partial_{x_1} \overline{Q}_{e,s})\,|\, \mathrm{axl}(\overline{Q}_{e,s}^T\,\partial_{x_2} \overline{Q}_{e,s})\,|0)^T, (\mathrm{axl}(\overline{Q}_{e,s}^T\,\partial_{x_1} \overline{Q}_{e,s})\,|\, \mathrm{axl}(\overline{Q}_{e,s}^T\,\partial_{x_2} \overline{Q}_{e,s})\,|0)^T\bigr\rangle\\
 &\geq \, \lambda_0^2\, \lVert (\mathrm{axl}(\overline{Q}_{e,s}^T\,\partial_{x_1} \overline{Q}_{e,s})\,|\, \mathrm{axl}(\overline{Q}_{e,s}^T\,\partial_{x_2} \overline{Q}_{e,s}))\rVert ^2.\notag
\end{align}
  From \eqref{Elambda} we have
\begin{align}
\lVert \mathcal{E}_{m,s}\rVert ^2&
\geq\, \lambda_0^2\Big[\lVert \overline{R}_s^T\nabla m\rVert ^2 -2\,\bigl\langle \overline{R}_s^T\,\nabla m ,\,Q_0^T\nabla y_0 \bigr\rangle + \lVert  Q_0^T\nabla y_0 \rVert ^2\Big].
\end{align}
Since $\lVert \overline{R}_s^T\nabla m\rVert ^2=\lVert\nabla m\rVert ^2$ and $\lVert  Q_0^T\nabla y_0 \rVert ^2=\lVert  \nabla y_0 \rVert ^2$,   after  integrating over $\omega$, using \eqref{R3}, the Cauchy--Schwarz inequality  and  the hypothesis upon $y_0$,   gives us the estimate
\begin{align}\lVert \mathcal{E}_{m,s}\rVert ^2_{{\rm L}^2(\omega)} &\geq\,
\lambda_0^2\Big[\lVert \nabla m\rVert ^2_{{\rm L}^2(\omega)} -2\,\lVert \nabla m \rVert_{{\rm L}^2(\omega)}\lVert \nabla y_0\rVert_{{\rm L}^2(\omega)}+ \lVert \nabla y_0\rVert ^2_{{\rm L}^2(\omega)}\Big]\notag\\
&\geq\,
\lambda_0^2\lVert \nabla m\rVert ^2_{{\rm L}^2(\omega)}- C_1\,\lVert \nabla m \rVert_{{\rm L}^2(\omega)}+ C_2,
\end{align}
for some positive constants ${C}_1>0$, ${C}_2>0$.

 By virtue of the coercivity of the internal energy and \eqref{26}, \eqref{27} and \eqref{Elambda}, the functional $I(m,\overline{Q}_{e,s})$ is bounded from below
\begin{align}\label{IC}
I(m,\overline{Q}_{e,s})&\geq\, C_1 \!\!\int_\omega \lVert \mathcal{E}_{m,s}\rVert ^2\,\det[\nabla_x\Theta(0)]\,  {{\rm d}a} - \overline{\Pi}(m,\overline{Q}_{e,s}) \geq\, C_2\,a_0 \lVert \mathcal{E}_{m,s}\rVert ^2_{{\rm L}^2(\omega)}  - C_3\,\big(\,\lVert m\rVert_{{\rm H}^1(\omega)}+1\big)\notag\\&\geq\,  C_4\, \lVert \nabla m\rVert ^2_{{\rm L}^2(\omega)} - C_5\,\lVert m\rVert_{{\rm H}^1(\omega)}-{C}_6\, \quad \forall \, (m,\overline{Q}_{e,s})\in  {\rm H}^1(\omega, \mathbb{R}^3)\times{\rm H}^1(\omega, {\rm SO}(3)),
\end{align}
with $C_i>0$, $i=1,2,...,6$.
We also obtain, applying the Poincar\'e--inequality, that  there exists  a constant $C>0$ such that
\begin{align}
\lVert \nabla m\rVert ^2_{{\rm L}^2(\omega)}&\geq\, (\lVert \nabla (m-m^*)\rVert_{{\rm L}^2(\omega)}-\lVert \nabla m^*\rVert_{{\rm L}^2(\omega)})^2\notag\\&\geq\, C\,\lVert  m-m^*\rVert _{{\rm H}^1(\omega)}^2-2\,\lVert  m-m^*\rVert _{{\rm H}^1(\omega)} \lVert \nabla m^*\rVert_{{\rm L}^2(\omega)}+\lVert \nabla m^*\rVert_{{\rm L}^2(\omega)}^2 \notag\\&\geq\, C\,\lVert  m-m^*\rVert _{{\rm H}^1(\omega)}^2-\frac{1}{\varepsilon}\,\lVert  m-m^*\rVert ^2_{{\rm H}^1(\omega)}-\varepsilon \lVert \nabla m^*\rVert ^2_{{\rm L}^2(\omega)}+\lVert \nabla m^*\rVert_{{\rm L}^2(\omega)}^2 \,\qquad \forall \, \varepsilon>0.
\end{align}
Therefore, by choosing $\varepsilon>0$ small enough,  \eqref{IC} ensures the existence of  constants $C_1>0$ and $C {_2}\in\mathbb{R}$ such that
\begin{align}\label{36}
I(m,\overline{Q}_{e,s})&\geq\, C_1\lVert  m-m^*\rVert_{{\rm H}^1(\omega)}^2 +C_2
\, \quad \forall \, (m,\overline{Q}_{e,s})\in  {\rm H}^1(\omega, \mathbb{R}^3)\times{\rm H}^1(\omega, {\rm SO}(3)),
\end{align}
i.e.,  the functional $I(m,\overline{Q}_{e,s})$ is bounded from below on $\mathcal{A}$.

Hence, there exists an infimizing sequence $\big\{(m_k,\overline{Q}_{k})\big\}_{k=1}^\infty$ in $  \mathcal{A}$, such that
\begin{equation}\label{37}
\lim_{k\rightarrow \infty} I(m_k,\overline{Q}_{k}) = \,\inf\, \big\{I(m,\overline{Q}_{e,s})\, \big|\,  (m,\overline{Q}_{e,s})\in  \mathcal{A}\big\}.
\end{equation}
Since we have $I(m^*,\overline{Q}_{e,s}^*)<\infty$,  in view of the conditions \eqref{25}, the infimizing sequence $\big\{(m_k,\overline{Q}_{k})\big\}_{k=1}^\infty$ can be chosen such that
\begin{equation}\label{38}
I(m_k,\overline{Q}_{k})\,\leq \,I(m^*,\overline{Q}_{e,s}^*)\,< \infty\,, \qquad \forall\,k\geq\, 1.
\end{equation}
Taking into account \eqref{36} and \eqref{38} we see that the sequence $\big\{m_k \big\}_{k=1}^\infty$ is bounded in ${\rm H}^1(\omega,\mathbb{R}^3)$. Then, we can extract a subsequence of $\big\{m_k \big\}_{k=1}^\infty$  (not relabeled) which converges weakly in  ${\rm H}^1(\omega,\mathbb{R}^3)$ and moreover, according to Rellich's selection principle, it converges strongly in ${\rm L}^2(\omega,\mathbb{R}^{3})$, i.e., ~there exists an element $\widehat{m}\in{\rm H}^1(\omega,\mathbb{R}^3)$ such that
\begin{equation}\label{39}
m_k  \rightharpoonup \widehat{m} \quad\mathrm{in}\quad {\rm H}^1(\omega, \mathbb{R}^3),\qquad \mathrm{and}\qquad m_k \rightarrow\widehat{ m} \quad\mathrm{in}\quad {\rm L}^2(\omega, \mathbb{R}^3).
\end{equation}
Corresponding to the fields $(m_k,\overline{Q}_{k})$ we consider the strain measures   $\mathcal{E}_{m,s}^{(k)},\,\mathcal{K}_{e,s}^{(k)}\in {\rm L}^2(\omega,\mathbb{R}^{3\times 3})$. From the coercivity of the internal energy, \eqref{27} and \eqref{38} we get
$$ C_1\,\lVert \mathcal{K}_{e,s}^{(k)}\rVert ^2_{{\rm L}^2(\omega)} \leq
\int_\omega W(\mathcal{E}_{m,s}^{(k)},\mathcal{K}_{e,s}^{(k)})\,\det[\nabla_x\Theta(0)] \,  {{\rm d}a}\leq I(m^*,\overline{Q}_{e,s}^*)+ C_2\,\big(\,\lVert m_k\rVert_{{\rm H}^1(\omega)}+1\big),
$$
where $C_1,C_2$ are positive constants.

Since $\big\{ m_k \big\}_{k=1}^\infty$ is bounded in ${\rm H}^1(\omega,\mathbb{R}^3)$, it follows from the last inequalities that $\big\{ \mathcal{K}_{e,s}^{(k)} \big\}_{k=1}^\infty$ is bounded in ${\rm L}^2(\omega,\mathbb{R}^{3\times 3})$.

  For tensor fields $P$ with rows in ${\rm H}({\rm curl}\,; \Omega)$, i.e., 
$
P=\begin{pmatrix}
P^T.e_1\,|\,
P^T.e_2\,|\,
P^T\, e_3
\end{pmatrix}^T$ with $(P^T.e_i)^T\in {\rm H}({\rm curl}\,; \Omega)$, $i=1,2,3$,
we define
$
{\rm Curl}\,P:=\begin{pmatrix}
{\rm curl}\, (P^T.e_1)^T\,|\,
{\rm curl}\, (P^T.e_2)^T\,|\,
{\rm curl}\, (P^T\, e_3)^T
\end{pmatrix}^T
.$
Since $\big\{\mathcal{K}_{e,s}^{(k)}\}_{k=1}^\infty$ is bounded, so is   $\big\{\text{axl}(\overline{Q}_{k}^{T}\partial_{x_\alpha} \overline{Q}_{k})\big\}_{k=1}^\infty$, $\alpha=1,2$, in ${\rm L}^2(\omega,\mathbb{R}^{3})$ and it follows that $\overline{Q}_{k}^T\,\Curl \,\overline{Q}_{k}$ is bounded.
 Indeed, using the
so-called \textit{{wryness tensor}} (second order tensor) \cite{Neff_curl06,Pietraszkiewicz04}
\begin{align}
\Gamma_k&:= \Big(\mathrm{axl}(\overline{Q}_{k}^T\,\partial_{x_1} \overline{Q}_{k})\,|\, \mathrm{axl}(\overline{Q}_{k}^T\,\partial_{x_2} \overline{Q}_{k})\,|\,0\,\Big)\in \mathbb{R}^{3\times 3},
\end{align}
we have (see \cite{Neff_curl06})  the following close relationship (Nye's formula) between the wryness tensor
and the dislocation density tensor
\begin{align}\label{Nye1}
\alpha_k\,:=\overline{Q}_{k}^T\,\Curl\, \overline{Q}_{k}=\,-\Gamma_k^T+\tr(\Gamma_k)\, \id_3\,, \qquad\textrm{or equivalently},\qquad \Gamma_k\,=\,-\alpha_k^T+\frac{1}{2}\tr(\alpha_
k)\, \id_3\,,
\end{align}
because $\overline{Q}_{k}=\overline{Q}_{k}(x_1,x_2)$. Hence, $\big\{\text{axl}(\overline{Q}_{k}^{T}\partial_{x_\alpha} \overline{Q}_{k})\big\}_{k=1}^\infty$ is bounded if and only if $\overline{Q}_{k}^T\,\Curl \,\overline{Q}_{k}$ is bounded.
Writing $\Curl \,\overline{Q}_{k}=\overline{Q}_{k}\,\overline{Q}_{k}^T\,\Curl \,\overline{Q}_{k}$ and using $\lVert \overline{Q}_{k}\rVert ^2=3$, we deduce that the boundedness of $\overline{Q}_{k}^T\,\Curl \,\overline{Q}_{k}$  implies that  $\Curl \,\overline{Q}_{k}$ is bounded.   Since the $\Curl$-operator bounds the gradient operator in ${\rm SO}(3)$, see \cite{Neff_curl06}, it follows  that
$ \big\{ \partial_{x_\alpha} \overline{Q}_{k}\big\}_{k=1}^\infty$ is bounded in ${\rm L}^2(\omega,\mathbb{R}^{3\times 3})$, for $\alpha=1,2$. Since $\overline{Q}_{k}\in {\rm SO}(3)$ we have $\lVert \overline{Q}_{k}\rVert ^2=3$ and thus we can infer that the sequence $ \big\{ \overline{Q}_{k}\big\}_{k=1}^\infty$
is bounded in ${\rm H}^1(\omega,\mathbb{R}^{3\times 3})$. Hence, there exists a subsequence of $ \big\{ \overline{Q}_{k}\big\}_{k=1}^\infty$ (not relabeled) and an element $\widehat{\overline{Q}}_{e,s}\in {\rm H}^1(\omega,\mathbb{R}^{3\times 3})$ with
\begin{equation}\label{40}
\overline{Q}_{k} \rightharpoonup       \widehat{\overline{Q}}_{e,s}    \quad\mathrm{in}\quad {\rm H}^1(\omega, \mathbb{R}^{3\times3}) , \qquad\mathrm{and}\qquad         \overline{Q}_{k}  \rightarrow   \widehat{\overline{Q}}_{e,s} \quad\mathrm{in}\quad {\rm L}^2(\omega, \mathbb{R}^{3\times3}).
\end{equation}
Since $ \overline{Q}_{k} \in {\rm SO}(3)$ we have
$$\lVert  \overline{Q}_{k}  \widehat{\overline{Q}}_{e,s} ^T -\id_3\rVert_{{\rm L}^2(\omega)}= \lVert  \overline{Q}_{k}  ( \widehat{\overline{Q}}_{e,s} ^T -\overline{Q}_{k}^T)\rVert_{{\rm L}^2(\omega)}=  \lVert   \widehat{\overline{Q}}_{e,s}  - \overline{Q}_{k} \rVert_{{\rm L}^2(\omega)} \rightarrow 0,
$$
i.e.,  $\overline{Q}_{k}\widehat{\overline{Q}}_{e,s}^T\rightarrow\id_3$ in ${\rm L}^2(\omega,\mathbb{R}^{3\times 3})$. On the other hand, we can write
$$\lVert \overline{Q}_{k} \widehat{\overline{Q}}_{e,s}^T -\widehat{\overline{Q}}_{e,s} \widehat{\overline{Q}}_{e,s}^T\rVert_{{\rm L}^1(\omega)}= \lVert  ( \overline{Q}_{k} -\widehat{\overline{Q}}_{e,s})\widehat{\overline{Q}}_{e,s}^T\rVert_{L^1(\omega)} \leq 3  \, \lVert    \overline{Q}_{k}  -\widehat{\overline{Q}}_{e,s} \rVert_{{\rm L}^2(\omega)}\,  \lVert   \widehat{\overline{Q}}_{e,s} \rVert_{{\rm L}^2(\omega)} \rightarrow 0,
$$
which means that $\overline{Q}_{k} \widehat{\overline{Q}}_{e,s}^T\rightarrow\widehat{\overline{Q}}_{e,s}\widehat{\overline{Q}}_{e,s}^T$ in ${\rm L}^1(\omega,\mathbb{R}^{3\times 3})$. Consequently, we find $\widehat{\overline{Q}}_{e,s}\widehat{\overline{Q}}_{e,s}^T=\id_3$ so that $\widehat{\overline{Q}}_{e,s}$ belongs to ${\rm H}^1(\omega,{\rm SO}(3))$.

By virtue of the relations  $(m_k,\overline{Q}_{k})\in \mathcal{A}$ and \eqref{39}, \eqref{40},   we derive that $\widehat{m}={m}^*$ on $\gamma_d$ and $\widehat{\overline{Q}}_{e,s}=\overline{Q}_{e,s}^*$ on $\gamma_d\,$ in the sense of traces. Hence, we obtain that the limit pair satisfies $(\widehat{m},\widehat{\overline{Q}}_{e,s})\in \mathcal{A}$.

Let us next construct the limit strain and curvature measures
\begin{align}\label{e55}
\widehat{\mathcal{E}}_{m,s}  :&=\,    \widehat{\overline{Q}}_{e,s}^T  (\nabla   {\widehat{m}}|\widehat{\overline{Q}}_{e,s}\nabla_x\Theta(0)\, e_3)[\nabla_x \Theta(0)]^{-1}-\id_3=Q_0( \widehat{\overline{R}}_s^T\,\nabla  {\widehat{m}} -Q_0^T\nabla y_0 |0)[\nabla_x \Theta(0)]^{-1} , \notag \\
\widehat{\mathcal{K}}_{e,s}  :&=\,  (\mathrm{axl}(\widehat{\overline{Q}}_{e,s}^T\,\partial_{x_1} \widehat{\overline{Q}}_{e,s})\,|\, \mathrm{axl}(\widehat{\overline{Q}}_{e,s}^T\,\partial_{x_2} \widehat{\overline{Q}}_{e,s})\,|0)[\nabla_x \Theta(0)]^{-1}\\&= Q_0\big(\,\text{axl}(\widehat{\overline{R}}_s^T\partial_{x_1} \widehat{\overline{R}}_s)\!-\!\text{axl}(Q^{T}_0\partial_{x_1} Q_0) \,\,\big|\,\, \text{axl}(\widehat{\overline{R}}_s^T\partial_{x_2} \widehat{\overline{R}}_s)\! -\!\text{axl}(Q^{T}_0\partial_{x_2} Q_0)\,\,\big|\, \,0\,\big)[\nabla_x \Theta(0)]^{-1},\notag
\end{align}
where
\begin{equation}
\widehat{\overline{R}}_s(x_1,x_2) :=\widehat{\overline{Q}}_{e,s}(x_1,x_2)\,Q_0(x_1,x_2,0)\in{\rm SO}(3).
\end{equation}

As shown above, the sequence $ \big\{ m_k\big\}_{k=1}^\infty$ is bounded in ${\rm H}^1(\omega,\mathbb{R}^{3 })$. It follows that $ \big\{(\nabla m_k|0)\big\}_{k=1}^\infty$ is bounded in ${\rm L}^2(\omega,\mathbb{R}^{3\times 3})$. We define
\begin{align}
\overline{R}_k :=\overline{Q}_k\,Q_0\in{\rm SO}(3).
\end{align}
Then, the sequence $ \big\{ \overline{R}_k^T(\nabla m_k|0)\big\}_{k=1}^\infty$ is bounded in ${\rm L}^2(\omega,\mathbb{R}^{3\times 3 })$, since $ \overline{R}_k\in {\rm SO}(3)$. Consequently, there exists a subsequence (not relabeled) and an element $\xi\in {\rm L}^2(\omega,\mathbb{R}^{3\times 3})$ such that
\begin{equation}\label{43}
\overline{R}_k^T(\nabla m_k|0) \,\,\rightharpoonup \,\, \xi\qquad\text{in} \quad {\rm L}^2(\omega,\mathbb{R}^{3\times 3}).
\end{equation}
On the other hand, let ${\Phi}\in  {\rm C}_0^\infty(\omega,\mathbb{R}^{3\times 3})$ be an arbitrary test function. Then, using the properties of the scalar product  we deduce
\begin{align}
\displaystyle{\int_\omega}\bigl\langle\overline{R}_k^T(\nabla m_k|0) - \widehat{\overline{R}}_s^T(\nabla \widehat{m}|0),\,\Phi \bigr\rangle\, {{\rm d}a}&=
\displaystyle{\int_\omega} \bigl\langle \widehat{\overline{R}}_s^T \big( (\nabla m_k|0)-(\nabla \widehat{m}|0)\big) ,{\Phi}\bigr\rangle \, {{\rm d}a}
+\displaystyle{\int_\omega}  \bigl\langle\big( \overline{R}_k^T- \widehat{\overline{R}}_s^T\big) (\nabla m_k|0) ,{\Phi} \bigr\rangle\, {{\rm d}a}\notag
\\
&   =\displaystyle{\int_\omega}  \bigl\langle (\nabla m_k|0)-(\nabla \widehat{m}|0),\widehat{\overline{R}}_s{\Phi} \bigr\rangle\,  {{\rm d}a}  +
\displaystyle{\int_\omega} \!\! \bigl\langle \overline{R}_{k}\!\!-\! \widehat{\overline{R}}_s\,,(\nabla m_k|0)\,{\Phi}^T \bigr\rangle  {{\rm d}a}
\\
& \leq\lVert \overline{R}_{k}\!-\! \widehat{\overline{R}}_s\rVert_{{\rm L}^2(\omega)}\lVert (\nabla m_k|0)\,{\Phi}^T\rVert_{{\rm L}^2(\omega)} \!+\!\! \displaystyle{\int_\omega}  \bigl\langle(\nabla m_k|0)-(\nabla \widehat{m}|0),\widehat{\overline{R}}_s{\Phi} \bigr\rangle\,  {{\rm d}a\, .}\notag
\end{align}
 {S}ince the relations \eqref{39}, \eqref{40} and $\widehat{\overline{R}}_s{\Phi}\in{\rm L}^2(\omega,\mathbb{R}^{3\times 3})$ hold, and $\lVert(\nabla m_k|0)\,{\Phi}^T\rVert$ is bounded{, } we get
\begin{equation}\label{44}
\displaystyle{\int_\omega}\bigl\langle\overline{R}_k^T (\nabla m_k|0) ,{\Phi} \bigr\rangle\, {{\rm d}a} \rightarrow
\displaystyle{\int_\omega}\bigl\langle \widehat{\overline{R}}_s^T (\nabla \widehat{m}|0),{\Phi} \bigr\rangle\, {{\rm d}a},\quad\forall\, {\Phi}\in  {\rm C}_0^\infty(\omega,\mathbb{R}^{3\times 3}).
\end{equation}
By comparison of \eqref{43} and \eqref{44} we find ${\xi}= \widehat{\overline{R}}_s^{T}(\nabla \widehat{m}|0)$, which means that
$\overline{R}_k^T (\nabla m_k|0) \rightharpoonup
\widehat{\overline{R}}_s^T (\nabla \widehat{m}|0)$ in $ {\rm L}^2(\omega,\mathbb{R}^{3\times 3})$,
or equivalently
\begin{equation}\label{45}
\overline{R}_k^T (\nabla m_k|0)  - Q^{T}_0(\nabla y_0|0) \quad \rightharpoonup\quad
\overline{R}_k^T (\nabla \widehat{m}|0)  - Q^{T}_0(\nabla y_0|0)\quad\mathrm{in}\quad {\rm L}^2(\omega,\mathbb{R}^{3\times 3}).
\end{equation}
Taking into account  the hypotheses, we obtain from \eqref{45} that \begin{align}\label{convE}
\mathcal{E}_{m,s}^{(k)}  :&=Q_0( \overline{R}_k^T\,\nabla m_k -Q_0^T\nabla y_0 |0)[\nabla_x \Theta(0)]^{-1}\rightharpoonup\widehat{\mathcal{E}}_{m,s}
\end{align} in ${\rm L}^2(\omega,\mathbb{R}^{3})$.

We use now the fact that the sequence $ \big\{\text{axl}( \overline{R}_k^T\partial_{x_\alpha} \overline{R}_k)\big\}_{k=1}^\infty$, $\alpha=1,2$, is bounded in ${\rm L}^2(\omega,\mathbb{R}^{3 })$, since  we proved previously that $ \overline{R}_k^T\partial_{x_\alpha} \overline{R}_k$ is bounded in ${\rm L}^2(\omega,\mathbb{R}^{3 \times 3})$. Then, there exists a subsequence (not relabeled) and an element ${ \zeta}_\alpha\in {\rm L}^2(\omega,\mathbb{R}^{3})$, $\alpha=1,2$,  such that
\begin{equation}\label{46}
\text{axl}(\overline{R}_k^T\partial_{x_\alpha} \overline{R}_k) \rightharpoonup { \zeta}_\alpha\qquad\text{in} \quad {\rm L}^2(\omega,\mathbb{R}^{3}).
\end{equation}
On the other hand, for any test function ${{\phi}}\in  {\rm C}_0^\infty(\omega,\mathbb{R}^{3})$ we can write
\begin{align}
\displaystyle{\int_\omega}& \bigl\langle \text{axl}(\overline{R}_k^T \partial_{x_\alpha} \overline{R}_{k} - \widehat{\overline{R}}_s^T \partial_{x_\alpha} \widehat{\overline{R}}_s)  \,,\, {\phi}  \bigr\rangle_{\mathbb{R}^3}  \, {{\rm d}a}=\frac{1}{2}\,\displaystyle{\int_\omega} \bigl\langle \overline{R}_k^T \partial_{x_\alpha} \overline{R}_{k} - \widehat{\overline{R}}_s^T \partial_{x_\alpha} \widehat{\overline{R}}_s  \,,\,{\rm anti} ({\phi})  \bigr\rangle _{\mathbb{R}^{3\times 3}}   \, {{\rm d}a}
\notag\\&=\frac{1}{2}
\displaystyle{\int_\omega} \bigl\langle   \widehat{\overline{R}}_s^T \big(\partial_{x_\alpha}  \overline{R}_{k} - \partial_{x_\alpha} \widehat{\overline{R}}_s\big) \,,\, {\rm anti} ({\phi}) \bigr\rangle  _{\mathbb{R}^{3\times 3}}  \, {{\rm d}a} +
\frac{1}{2}\displaystyle{\int_\omega} \bigl\langle \big( \overline{R}_k^T- \widehat{\overline{R}}_s^T\big) \partial_{x_\alpha} \overline{R}_{k}  \,,\, {\rm anti} ({\phi}) \bigr\rangle_{\mathbb{R}^{3\times 3}}    \, {{\rm d}a}\\
&\leq
\frac{1}{2}\displaystyle{\int_\omega}
 \bigl\langle \partial_{x_\alpha} \overline{R}_{k} - \partial_{x_\alpha} \widehat{\overline{R}}_s  \,,\, \widehat{\overline{R}}_s\,{\rm anti} ({\phi}) \bigr\rangle _{\mathbb{R}^{3\times 3}}  \, {{\rm d}a}
+
\frac{1}{2}\lVert \overline{R}_{k}- \widehat{\overline{R}}_s\rVert_{{\rm L}^2(\omega)}\,\lVert  \partial_{x_\alpha} \overline{R}_k\, [{\rm anti} ({\phi})]^T \, \rVert_{{\rm L}^2(\omega)} \rightarrow 0,\notag
\end{align}
since $\widehat{\overline{R}}_s\, {\rm anti} ({\phi})\in{\rm L}^2(\omega,\mathbb{R}^{3\times 3})$, $\lVert \partial_{x_\alpha} \overline{R}_k \,[{\rm anti} ({\phi})]^T\rVert $ is bounded, and relations \eqref{40} hold. Consequently, we have
\begin{align}\displaystyle{\int_\omega} \bigl\langle \text{axl}(\overline{R}_k^T \partial_{x_\alpha} \overline{R}_{k})\,,\, \phi\bigr\rangle _{\mathbb{R}^{3}}  \, {{\rm d}a} \rightarrow
\displaystyle{\int_\omega} \bigl\langle  \text{axl}(\widehat{\overline{R}}_s^T \partial_{x_\alpha} \widehat{\overline{R}}_s)\,,\, \phi \bigr\rangle _{\mathbb{R}^{3}}  \, {{\rm d}a},\quad\forall \,\phi\in  {\rm C}_0^\infty(\omega,\mathbb{R}^{ {3}}), \end{align}
and by comparison with \eqref{46} we deduce that ${ \zeta}_\alpha=\text{axl}( \widehat{\overline{R}}_s{}^{T}\partial_{x_\alpha} \widehat{\overline{R}}_s)\,$, i.e., 
\begin{align}\text{axl}(\overline{R}_k^T \partial_{x_\alpha} \overline{R}_k) - \text{axl}(Q_0 \partial_{x_\alpha} Q_0)
\,\,\,\rightharpoonup \,\,\,
\text{axl}(\widehat{\overline{R}}_s{}^{T}\partial_{x_\alpha} \widehat{\overline{R}}_s)- \text{axl}(Q_0 \partial_{x_\alpha} Q_0)
\quad\text{in}\,\, {\rm L}^2(\omega,\mathbb{R}^{ {3}}),\end{align}
Hence, from  \eqref{26} we derive  the convergence
\begin{align}\label{convK}
\mathcal{K}_{e,s}^{(k)}  :&= Q_0\big(\,\text{axl}({\overline{R}}_k^T\partial_{x_1} {\overline{R}}_k)\!-\!\text{axl}(Q^{T}_0\partial_{x_1} Q_0) \,\,\big|\,\, \text{axl}({\overline{R}}_k^T\partial_{x_2}{\overline{R}}_k)\! -\!\text{axl}(Q^{T}_0\partial_{x_2} Q_0)\,\,\big|\, \,0\,\big)[\nabla_x \Theta(0)]^{-1} \rightharpoonup\widehat{\mathcal{K}}_{e,s}.
\end{align}

In the last step of the proof we use the convexity of the strain energy density $W$. In view of \eqref{convE} and \eqref{convK}, we have
\begin{equation}\label{47}
\int_\omega W(\widehat{\mathcal{E}}_{m,s},\widehat{\mathcal{K}}_{e,s})\,\det[\nabla_x\Theta(0)] \, {{\rm d}a}\,\leq \, \liminf_{n\to\infty}  \int_\omega W(\mathcal{E}_{m,s}^{(k)},\mathcal{K}_{e,s}^{(k)})\,\det[\nabla_x\Theta(0)] \, {{\rm d}a}.
\end{equation}
since $W$ is convex in $({\mathcal{E}}_{m,s},{\mathcal{K}}_{e,s})$. Taking into account the hypotheses \eqref{24}, the continuity of the load potential functions,    and the convergence relations \eqref{39}$_2$ and \eqref{40}$_2\,$, we deduce
\begin{equation}\label{48}
 {\overline{\Pi}}(\widehat{{m}}, \widehat{\overline{Q}}_{e,s})=  \lim_{n\to\infty}  {\overline{\Pi}}(m_k, \overline{Q}_k).
\end{equation}
From \eqref{47} and \eqref{48} we get
\begin{equation}\label{49}
I(\widehat{{m}},\widehat{\overline{Q}}_{e,s})\,\leq\,  \liminf_{n\to\infty} \, I(m_k,\overline{Q}_k)\,.
\end{equation}
Finally, the relations \eqref{37} and \eqref{49} show that
$$ I(\widehat{{m}},\widehat{\overline{Q}}_{e,s})\,=\,
\,\inf\, \big\{I({m},\overline{Q}_{e,s})\, \big|\,  ({m},\overline{Q}_{e,s})\in \mathcal{A}\big\}.
$$
Since $(\widehat{{m}},\widehat{\overline{Q}}_{e,s})\in\mathcal{A}$, we conclude that $(\widehat{{m}},\widehat{\overline{Q}}_{e,s})$ is a minimizing solution pair of our minimization problem.
\end{proof}
The boundary condition on $\overline{Q}_{e,s}$ is not essential in the proof of the above theorem, and that one can prove the existence of minimizers for the minimization problem over a larger admissible set:
\begin{corollary}\label{th1nD}{\rm [Existence result for the theory including terms up to order $O(h^5)$ without boundary condition on the microrotation field]}
Under the hypotheses of Theorem \ref{th1},  the minimization problem \eqref{e89}--\eqref{boundary} admits at least one minimizing solution pair
	\begin{align}(m,\overline{Q}_{e,s})\in
	\mathcal{A}=\big\{(m,\overline{Q}_{e,s})\in{\rm H}^1(\omega, \mathbb{R}^3)\times{\rm H}^1(\omega, {\rm SO}(3))\,\,\big|\,\,\,  m\big|_{ \gamma_d}=m^* \big\}\,.
\end{align}
\end{corollary}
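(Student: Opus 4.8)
The plan is to re-run the direct-method argument of the proof of Theorem~\ref{th1}, observing that the Dirichlet condition $\overline{Q}_{e,s}\big|_{\gamma_d}=\overline{Q}_{e,s}^*$ was never used in an essential way, so that the same reasoning applies over the enlarged admissible set. First I would note that the competitor $(m^*,\overline{Q}_{e,s}^*)$ already used in the proof of Theorem~\ref{th1} still lies in the new set $\mathcal{A}$, since now only $m^*\big|_{\gamma_d}=m^*$ is required, and it still has finite energy by \eqref{27} and the finiteness of the associated strain and curvature measures; hence $\inf_{\mathcal{A}}I<\infty$. Moreover, the lower bound \eqref{36} was derived using only $m\big|_{\gamma_d}=m^*$, the coercivity of Proposition~\ref{propcoerh5}, and the Poincar\'e inequality, with no input on the rotation field; thus it holds verbatim on $\mathcal{A}$, the functional $I$ is bounded below there, and every infimizing sequence $\{(m_k,\overline{Q}_k)\}$ has $\{m_k\}$ bounded in ${\rm H}^1(\omega,\mathbb{R}^3)$.

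Next I would reproduce the rotation-compactness step unchanged. From \eqref{38} and Proposition~\ref{propcoerh5}, the sequence $\{\mathcal{K}_{e,s}^{(k)}\}$ is bounded in ${\rm L}^2(\omega,\mathbb{R}^{3\times 3})$; Nye's formula \eqref{Nye1} converts this into a bound on $\overline{Q}_k^T\,\Curl\,\overline{Q}_k$, hence on $\Curl\,\overline{Q}_k$ because $\lVert\overline{Q}_k\rVert^2=3$; since the $\Curl$-operator controls the full gradient of an ${\rm SO}(3)$-valued map, $\{\partial_{x_\alpha}\overline{Q}_k\}$ ($\alpha=1,2$) is bounded in ${\rm L}^2$, and together with $\lVert\overline{Q}_k\rVert^2=3$ this gives a bound in ${\rm H}^1(\omega,\mathbb{R}^{3\times 3})$. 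This chain of estimates is purely interior and uses no boundary data. Extracting weakly convergent subsequences exactly as in \eqref{39}--\eqref{40}, with the ${\rm SO}(3)$-constraint passing to the limit through the ${\rm L}^1$-argument of the proof of Theorem~\ref{th1}, yields a limit pair with $\widehat m\in{\rm H}^1(\omega,\mathbb{R}^3)$ and $\widehat{\overline{Q}}_{e,s}\in{\rm H}^1(\omega,{\rm SO}(3))$. The only use of the rotation boundary condition in the original proof was the trace identification $\widehat{\overline{Q}}_{e,s}=\overline{Q}_{e,s}^*$ on $\gamma_d$, which is now simply omitted; the trace identity $\widehat m=m^*$ on $\gamma_d$ still follows from \eqref{39}, so $(\widehat m,\widehat{\overline{Q}}_{e,s})\in\mathcal{A}$.

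Finally, the weak convergences $\mathcal{E}_{m,s}^{(k)}\rightharpoonup\widehat{\mathcal{E}}_{m,s}$ and $\mathcal{K}_{e,s}^{(k)}\rightharpoonup\widehat{\mathcal{K}}_{e,s}$ in ${\rm L}^2(\omega,\mathbb{R}^{3\times 3})$ are obtained precisely as in \eqref{convE} and \eqref{convK} --- these steps involve only $\{m_k\}$, $\{\overline{Q}_k\}$ and compactly supported test functions --- and the convexity of $W$ (Corollary~\ref{corconh5}) together with the continuity and boundedness of $\overline{\Pi}$ give $I(\widehat m,\widehat{\overline{Q}}_{e,s})\le\liminf_k I(m_k,\overline{Q}_k)$. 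Combined with $\lim_k I(m_k,\overline{Q}_k)=\inf_{\mathcal{A}}I$, this shows that $(\widehat m,\widehat{\overline{Q}}_{e,s})$ realizes the infimum over $\mathcal{A}$. There is no genuinely new analytic difficulty here; the only point requiring care is the bookkeeping --- checking that every occurrence of $\overline{Q}_{e,s}^*$ in the proof of Theorem~\ref{th1} is either retained merely as a fixed ${\rm H}^1$ rotation competitor or else deleted, and in particular that the ${\rm H}^1$-compactness of $\{\overline{Q}_k\}$ genuinely originates from the coercive curvature term $W_{\mathrm{curv}}$ and not from any boundary control of the rotations.
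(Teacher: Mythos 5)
Your proposal is correct and takes exactly the approach the paper intends: the paper itself states (immediately before the corollary) that "the boundary condition on $\overline{Q}_{e,s}$ is not essential in the proof of the above theorem," leaving the verification to the reader, and your line-by-line audit — finite infimum via the competitor $(m^*,\overline{Q}_{e,s}^*)$, coercive lower bound from the $m$-trace alone, $H^1$-compactness of $\{\overline{Q}_k\}$ coming from the curvature energy via Nye's formula rather than from any boundary control, passage to the limit of the ${\rm SO}(3)$-constraint, and omission of only the single trace identification $\widehat{\overline{Q}}_{e,s}=\overline{Q}_{e,s}^*$ — is precisely the bookkeeping the paper has in mind. This also matches the paper's footnote noting that the existence theory works for free microrotations at the boundary because ${\rm SO}(3)$ is compact.
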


\section{Existence of minimizers for the Cosserat shell model of order $O(h^3)$}\setcounter{equation}{0}

In this section we consider only terms up to order $O(h^3)$ in the expression of the energy density. Therefore, we obtain the following two-dimensional minimization problem   for the
deformation of the midsurface $m:\omega {\to}\mathbb{R}^3$ and the microrotation of the shell
$\overline{Q}_{e,s}:\omega {\to} \textrm{SO}(3)$ solving on $\omega\subset\mathbb{R}^2$: minimize with respect to $ (m,\overline{Q}_{e,s}) $ the following functional
\begin{equation}\label{eh3}
I(m,\overline{Q}_{e,s})\!=\!\! \int_{\omega}
W^{(h^3)}\big(  \mathcal{E}_{m,s} ,\,  \mathcal{K}_{e,s} \big)
 \,{\rm det}(\nabla y_0|n_0)       \,  {{\rm d}a} - \overline{\Pi}(m,\overline{Q}_{e,s})\,,
\end{equation}
where
	the shell energy density $W^{(h^3)}(\mathcal{E}_{m,s}, \mathcal{K}_{e,s})$ is given by
\begin{align}\label{h3energy} W^{(h^3)}(\mathcal{E}_{m,s}, \mathcal{K}_{e,s})=&\,  \Big(h+{\rm K}\,\dfrac{h^3}{12}\Big)\,
W_{\mathrm{shell}}\big(    \mathcal{E}_{m,s} \big)+  \dfrac{h^3}{12}\,
W_{\mathrm{shell}}  \big(   \mathcal{E}_{m,s} \, {\rm B}_{y_0} +   {\rm C}_{y_0} \mathcal{K}_{e,s} \big) \notag \\&
-\dfrac{h^3}{3} \mathrm{ H}\,\mathcal{W}_{\mathrm{shell}}  \big(  \mathcal{E}_{m,s} ,
\mathcal{E}_{m,s}{\rm B}_{y_0}+{\rm C}_{y_0}\, \mathcal{K}_{e,s} \big)+
\dfrac{h^3}{6}\, \mathcal{W}_{\mathrm{shell}}  \big(  \mathcal{E}_{m,s} ,
( \mathcal{E}_{m,s}{\rm B}_{y_0}+{\rm C}_{y_0}\, \mathcal{K}_{e,s}){\rm B}_{y_0} \big)\vspace{2.5mm}\\
&+  \Big(h-{\rm K}\,\dfrac{h^3}{12}\Big)\,
W_{\mathrm{curv}}\big(  \mathcal{K}_{e,s} \big)    +  \dfrac{h^3}{12}
W_{\mathrm{curv}}\big(  \mathcal{K}_{e,s}   {\rm B}_{y_0} \,  \big),  \notag
\end{align}
with all the other quantities having the same expressions and  interpretations as in the theory up to order $O(h^5)$.

In \cite{GhibaNeffPartI} we have presented a comparison with the the general 6-parameter shell model \cite{Eremeyev06}.
While in the previous approaches \cite{Eremeyev06,Pietraszkiewicz-book04,Pietraszkiewicz10,NeffBirsan13} the dependence of the coefficients upon the curved initial shell configuration is not specified, in  our shell model, the constitutive coefficients are deduced from the three-dimensional formulation, while the influence of the curved initial shell configuration appears explicitly  in the expression of the coefficients  of the energies for the reduced two-dimensional variational problem.
Another major difference between our model and the previously considered   general 6-parameter shell model is that, even in the case of a  simplified theory of order $O(h^3)$, additional mixed terms like the membrane--bending part $-\dfrac{h^3}{3} \mathrm{ H}\,\mathcal{W}_{\mathrm{shell}}  \big(  \mathcal{E}_{m,s} ,
\mathcal{E}_{m,s}{\rm B}_{y_0}+{\rm C}_{y_0}\, \mathcal{K}_{e,s} \big)$ and
$\dfrac{h^3}{6}\, \mathcal{W}_{\mathrm{shell}}  \big(  \mathcal{E}_{m,s} ,
( \mathcal{E}_{m,s}{\rm B}_{y_0}+{\rm C}_{y_0}\, \mathcal{K}_{e,s}){\rm B}_{y_0} \big)$, as well as  $W_{\mathrm{curv}}\big(  \mathcal{K}_{e,s}   {\rm B}_{y_0} \,  \big) $, are included, which are otherwise difficult to guess. Therefore, an existence proof for the new $O(h^3)$-model is of independent interest.
First, we will show
\begin{proposition}\label{coerh3}{\rm [Coercivity in the theory including terms up to order $O(h^3)$]} Assume that the constitutive coefficients are  such that $\mu>0, \,\mu_{\rm c}>0$, $2\,\lambda+\mu> 0$, $b_1>0$, $b_2>0$ and $b_3>0$ and let $c_2^+$  denotes the smallest eigenvalue  of
	$
	W_{\mathrm{curv}}(  S ),
	$
	and $c_1^+$ and $ C_1^+>0$ denote the smallest and the largest eigenvalues of the quadratic form $W_{\mathrm{shell}}(  S)$.
	If the thickness $h$ satisfies  one of the following conditions:
	\begin{enumerate}
		\item[i)] $h|\kappa_1|<\frac{1}{2},\qquad h|\kappa_2|<\frac{1}{2}$ \quad and \quad
		$h^2<	\left(\dfrac{47}{4}\right)^2(5-2 \sqrt{6})\frac{ {c_2^+}}{C_1^+}$;
		\item[ii)] $h|\kappa_1|<\frac{1}{a}$ ,\qquad $h|\kappa_2|<\frac{1}{a}$ \quad with\quad  $a>\max\Big\{1 + \frac{\sqrt{2}}{2},\frac{1+\sqrt{1+3\frac{C_1^+}{c_1^+}}}{2}\Big\}$,
	\end{enumerate}
	then $W^{(h^3)}(\mathcal{E}_{m,s}, \mathcal{K}_{e,s})$
	is coercive, in the sense that  there exists   a constant $a_1^+>0$ such that
	\begin{equation}\label{26bis}
	W^{(h^3)}(\mathcal{E}_{m,s}, \mathcal{K}_{e,s})\,\geq\, a_1^+\, \big(  \lVert \mathcal{E}_{m,s}\rVert ^2 + \lVert \mathcal{K}_{e,s}\rVert ^2\,\big) ,
	\end{equation}
	where
	$a_1^+$ depends on the constitutive coefficients.
\end{proposition}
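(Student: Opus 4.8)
The plan is to argue exactly as in Proposition~\ref{propcoerh5}: at every point of $\omega$ the density $W^{(h^3)}(\mathcal{E}_{m,s},\mathcal{K}_{e,s})$ is a quadratic form in the eighteen entries of $(\mathcal{E}_{m,s},\mathcal{K}_{e,s})\in\mathbb{R}^{3\times3}\times\mathbb{R}^{3\times3}$ whose coefficients are built from the bounded geometric data ${\rm H},{\rm K},{\rm B}_{y_0},{\rm C}_{y_0}$, so it suffices to prove the pointwise bound $W^{(h^3)}(\mathcal{E}_{m,s},\mathcal{K}_{e,s})\geq a_1^+(\lVert\mathcal{E}_{m,s}\rVert^2+\lVert\mathcal{K}_{e,s}\rVert^2)$ with a uniform $a_1^+$. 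First I would retain the four manifestly non-negative pieces $\big(h+{\rm K}\frac{h^3}{12}\big)W_{\mathrm{shell}}(\mathcal{E}_{m,s})$, $\frac{h^3}{12}W_{\mathrm{shell}}(\mathcal{E}_{m,s}{\rm B}_{y_0}+{\rm C}_{y_0}\mathcal{K}_{e,s})$, $\big(h-{\rm K}\frac{h^3}{12}\big)W_{\mathrm{curv}}(\mathcal{K}_{e,s})$, $\frac{h^3}{12}W_{\mathrm{curv}}(\mathcal{K}_{e,s}{\rm B}_{y_0})$ (the first two coefficients being positive because $h-{\rm K}\frac{h^3}{12}>0$ under the thickness hypotheses, hence also $h+{\rm K}\frac{h^3}{12}>0$), and then dominate the two sign-indefinite cross terms $-\frac{h^3}{3}{\rm H}\,\mathcal{W}_{\mathrm{shell}}\big(\mathcal{E}_{m,s},\mathcal{E}_{m,s}{\rm B}_{y_0}+{\rm C}_{y_0}\mathcal{K}_{e,s}\big)$ and $\frac{h^3}{6}\mathcal{W}_{\mathrm{shell}}\big(\mathcal{E}_{m,s},(\mathcal{E}_{m,s}{\rm B}_{y_0}+{\rm C}_{y_0}\mathcal{K}_{e,s}){\rm B}_{y_0}\big)$ by the Cauchy--Schwarz and arithmetic--geometric mean inequalities, now without the $O(h^5)$-term $\frac{h^5}{80}W_{\mathrm{mp}}$ available to absorb anything.

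As in Proposition~\ref{propcoerh5} the thickness assumptions are read off the characteristic equation $\kappa^2-2{\rm H}\kappa+{\rm K}=0$: under~i) this gives $h^2|{\rm K}|<\frac14$ and $2h|{\rm H}|<1$ (so $h\pm{\rm K}\frac{h^3}{12}\geq\frac{47}{48}h$, as before), while under~ii) it gives $h^2|{\rm K}|<\frac{1}{a^2}$, $h|{\rm H}|<\frac1a$, together with $h\lVert{\rm B}_{y_0}\rVert<\frac1a$ (the eigenvalues of ${\rm B}_{y_0}$ being $\kappa_1,\kappa_2,0$). The first cross term is then handled verbatim as in Proposition~\ref{propcoerh5}: Cauchy--Schwarz for the bilinear form $\mathcal{W}_{\mathrm{shell}}$ followed by the arithmetic--geometric mean inequality moves $-\frac{h^3}{3}{\rm H}\,\mathcal{W}_{\mathrm{shell}}(\mathcal{E}_{m,s},\mathcal{E}_{m,s}{\rm B}_{y_0}+{\rm C}_{y_0}\mathcal{K}_{e,s})$ into the coefficients of $W_{\mathrm{shell}}(\mathcal{E}_{m,s})$ and of $W_{\mathrm{shell}}(\mathcal{E}_{m,s}{\rm B}_{y_0}+{\rm C}_{y_0}\mathcal{K}_{e,s})$ with weights proportional to $h|{\rm H}|$, hence controllably small.

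The crux, and the reason the $O(h^3)$ case is genuinely harder, is the second cross term $\frac{h^3}{6}\mathcal{W}_{\mathrm{shell}}(\mathcal{E}_{m,s},(\mathcal{E}_{m,s}{\rm B}_{y_0}+{\rm C}_{y_0}\mathcal{K}_{e,s}){\rm B}_{y_0})$, which in Proposition~\ref{propcoerh5} was absorbed into $\frac{h^5}{80}W_{\mathrm{mp}}((\mathcal{E}_{m,s}{\rm B}_{y_0}+{\rm C}_{y_0}\mathcal{K}_{e,s}){\rm B}_{y_0})$, a term absent here (cf.\ the footnote after that proof). Using Cauchy--Schwarz, $\lVert X{\rm B}_{y_0}\rVert\leq\lVert{\rm B}_{y_0}\rVert\,\lVert X\rVert$ and the upper bound in \eqref{pozitivdef}, this term is at most $\frac{h^3}{6}\sqrt{C_1^+}\,\lVert{\rm B}_{y_0}\rVert\,\lVert\mathcal{E}_{m,s}{\rm B}_{y_0}+{\rm C}_{y_0}\mathcal{K}_{e,s}\rVert\,W_{\mathrm{shell}}(\mathcal{E}_{m,s})^{1/2}$, and since $h^2\lVert{\rm B}_{y_0}\rVert^2$ is small a spare factor of order $h^2$ is available with which to re-distribute it. In case~ii) one spends it on $W_{\mathrm{shell}}(\mathcal{E}_{m,s})$ and on $W_{\mathrm{shell}}(\mathcal{E}_{m,s}{\rm B}_{y_0}+{\rm C}_{y_0}\mathcal{K}_{e,s})$ (using the lower bound in \eqref{pozitivdef} to pass from $\lVert\mathcal{E}_{m,s}{\rm B}_{y_0}+{\rm C}_{y_0}\mathcal{K}_{e,s}\rVert$ back to $W_{\mathrm{shell}}(\mathcal{E}_{m,s}{\rm B}_{y_0}+{\rm C}_{y_0}\mathcal{K}_{e,s})^{1/2}$); the leftover ratio $C_1^+/c_1^+$ is dominated by taking $a$ large, and requiring that, after subtracting \emph{both} cross-term contributions, the coefficients of $W_{\mathrm{shell}}(\mathcal{E}_{m,s})$ and of $W_{\mathrm{shell}}(\mathcal{E}_{m,s}{\rm B}_{y_0}+{\rm C}_{y_0}\mathcal{K}_{e,s})$ stay strictly positive is what yields the two thresholds $a>1+\frac{\sqrt2}{2}$ and $a>\frac12\big(1+\sqrt{1+3C_1^+/c_1^+}\big)$. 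In case~i) -- where only $h|\kappa_i|<\frac12$ holds and $C_1^+/c_1^+$ cannot be absorbed -- one instead distributes the second cross term between $W_{\mathrm{shell}}(\mathcal{E}_{m,s})$ and $W_{\mathrm{curv}}(\mathcal{K}_{e,s})$ (whose coefficient is $\geq\frac{47}{48}h\,c_2^+$), estimating $\lVert(\mathcal{E}_{m,s}{\rm B}_{y_0}+{\rm C}_{y_0}\mathcal{K}_{e,s}){\rm B}_{y_0}\rVert$ in terms of $\lVert\mathcal{E}_{m,s}\rVert$ and $\lVert\mathcal{K}_{e,s}\rVert$; asking that the resulting $2\times2$ symmetric coefficient matrix in $(\lVert\mathcal{E}_{m,s}\rVert,\lVert\mathcal{K}_{e,s}\rVert)$ stay positive definite is a determinant inequality whose solution, after optimizing the free arithmetic--geometric mean parameter, is precisely $h^2<\big(\frac{47}{4}\big)^2(5-2\sqrt6)\frac{c_2^+}{C_1^+}$ (the number $5-2\sqrt6=(\sqrt3-\sqrt2)^2$ being the value of that optimization).

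Once both cross terms are absorbed, in either case one has $W^{(h^3)}(\mathcal{E}_{m,s},\mathcal{K}_{e,s})\geq\alpha\,h\,W_{\mathrm{shell}}(\mathcal{E}_{m,s})+\beta\,h\,W_{\mathrm{curv}}(\mathcal{K}_{e,s})$ plus non-negative remainders with $\alpha,\beta>0$, and the lower bounds in \eqref{pozitivdef} then give the claim with $a_1^+=\min\{\alpha h c_1^+,\beta h c_2^+\}$. The main obstacle is exactly the honest control of $\frac{h^3}{6}\mathcal{W}_{\mathrm{shell}}(\mathcal{E}_{m,s},(\mathcal{E}_{m,s}{\rm B}_{y_0}+{\rm C}_{y_0}\mathcal{K}_{e,s}){\rm B}_{y_0})$: without the $W_{\mathrm{mp}}$-reservoir of the $O(h^5)$ model it must be paid for out of the membrane and curvature terms, and keeping their coefficients positive is what forces either the strengthened constitutive assumption (case~ii, through $a$) or the relation between thickness and characteristic length (case~i).
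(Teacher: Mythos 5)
Your plan for case~ii) essentially matches the paper's strategy: keep the non-negative blocks, apply Cauchy--Schwarz and the arithmetic--geometric mean inequality with the same $\delta=8,\ \varepsilon=2$ as in Proposition~\ref{propcoerh5}, and then use the sub-multiplicativity of the Frobenius norm together with $h^2\lVert{\rm B}_{y_0}\rVert^2<6/a^2$ to absorb the leftover $\frac{h^5}{96}W_{\mathrm{shell}}\big((\mathcal{E}_{m,s}{\rm B}_{y_0}+{\rm C}_{y_0}\mathcal{K}_{e,s}){\rm B}_{y_0}\big)$ into $\frac{h^3}{12}\frac{a-1}{a}\,c_1^+\lVert\mathcal{E}_{m,s}{\rm B}_{y_0}+{\rm C}_{y_0}\mathcal{K}_{e,s}\rVert^2$, which is exactly where the ratio $C_1^+/c_1^+$ and hence the threshold $a>\frac12(1+\sqrt{1+3C_1^+/c_1^+})$ appear. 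So far so good.

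For case~i) there is a genuine gap. You keep the two mixed terms in their original form and propose to dominate them directly by Cauchy--Schwarz, reducing to positivity of a $2\times 2$ coefficient matrix in $(\lVert\mathcal{E}_{m,s}\rVert,\lVert\mathcal{K}_{e,s}\rVert)$. But the second mixed term $\frac{h^3}{6}\mathcal{W}_{\mathrm{shell}}\big(\mathcal{E}_{m,s},(\mathcal{E}_{m,s}{\rm B}_{y_0}+{\rm C}_{y_0}\mathcal{K}_{e,s}){\rm B}_{y_0}\big)$ contains the contribution $\frac{h^3}{6}\mathcal{W}_{\mathrm{shell}}(\mathcal{E}_{m,s},\mathcal{E}_{m,s}{\rm B}_{y_0}^2)$, a pure $\mathcal{E}_{m,s}$--$\mathcal{E}_{m,s}$ interaction. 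Estimated by Cauchy--Schwarz and $\lVert{\rm B}_{y_0}\rVert^2=4{\rm H}^2-2{\rm K}$, this produces a term of size roughly $\frac{h^3}{6}C_1^+\lVert{\rm B}_{y_0}\rVert^2\lVert\mathcal{E}_{m,s}\rVert^2\sim h\,C_1^+\lVert\mathcal{E}_{m,s}\rVert^2$ (since under~i) one only has $h^2\lVert{\rm B}_{y_0}\rVert^2\leq \tfrac32$), which would have to be absorbed by the membrane term of size $h\,c_1^+\lVert\mathcal{E}_{m,s}\rVert^2$. This forces a constraint of the form $c_1^+/C_1^+$ bounded below, which is \emph{not} part of hypothesis~i) and cannot be made to disappear by tuning the free AGM parameters: condition~i) involves only $c_2^+/C_1^+$. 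The paper avoids this by first using the Cayley--Hamilton type identity ${\rm B}_{y_0}^2-2{\rm H}{\rm B}_{y_0}+{\rm K}{\rm A}_{y_0}=0$ together with $\mathcal{E}_{m,s}{\rm A}_{y_0}=\mathcal{E}_{m,s}$ to rewrite
\begin{align*}
\Big(h+{\rm K}\tfrac{h^3}{12}\Big)W_{\mathrm{shell}}(\mathcal{E}_{m,s})
&-\tfrac{h^3}{3}{\rm H}\,\mathcal{W}_{\mathrm{shell}}\big(\mathcal{E}_{m,s},\mathcal{E}_{m,s}{\rm B}_{y_0}+{\rm C}_{y_0}\mathcal{K}_{e,s}\big)
+\tfrac{h^3}{6}\mathcal{W}_{\mathrm{shell}}\big(\mathcal{E}_{m,s},(\mathcal{E}_{m,s}{\rm B}_{y_0}+{\rm C}_{y_0}\mathcal{K}_{e,s}){\rm B}_{y_0}\big)\\
&=\Big(h-{\rm K}\tfrac{h^3}{12}\Big)W_{\mathrm{shell}}(\mathcal{E}_{m,s})
-\tfrac{h^3}{3}{\rm H}\,\mathcal{W}_{\mathrm{shell}}(\mathcal{E}_{m,s},{\rm C}_{y_0}\mathcal{K}_{e,s})
+\tfrac{h^3}{6}\mathcal{W}_{\mathrm{shell}}(\mathcal{E}_{m,s},{\rm C}_{y_0}\mathcal{K}_{e,s}{\rm B}_{y_0}),
\end{align*}
so that the $\mathcal{E}_{m,s}$--$\mathcal{E}_{m,s}{\rm B}_{y_0}$ contributions cancel exactly (no inequality is paid for them) and the remaining cross terms pair $\mathcal{E}_{m,s}$ only against ${\rm C}_{y_0}\mathcal{K}_{e,s}$ and ${\rm C}_{y_0}\mathcal{K}_{e,s}{\rm B}_{y_0}$; these are then pushed onto the curvature term $\frac{47}{48}h\,c_2^+\lVert\mathcal{K}_{e,s}\rVert^2$, which is why the final threshold is expressed via $c_2^+/C_1^+$ and why the optimization of $\gamma\mapsto\frac{\gamma}{(2+3\gamma)(1+\gamma)}$ yields $5-2\sqrt6$. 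Without this algebraic cancellation your case~i) argument, as sketched, does not prove coercivity under the stated hypotheses.
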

\begin{proof}Using the properties presented in the Appendix,  since ${\rm B}_{y_0}^2-2\,{\rm H}\, {\rm B}_{y_0}+{\rm K}\, {\rm A}_{y_0}\,=\,0_3$ and $\mathcal{E}_{m,s}{\rm A}_{y_0}=\mathcal{E}_{m,s}$, it follows
	\begin{align}
	( \mathcal{E}_{m,s}{\rm B}_{y_0}+{\rm C}_{y_0}\, \mathcal{K}_{e,s}){\rm B}_{y_0}	=2\,{\rm H}\,
	\mathcal{E}_{m,s} {\rm B}_{y_0}-{\rm K}\, \mathcal{E}_{m,s}+{\rm C}_{y_0}\, \mathcal{K}_{e,s}{\rm B}_{y_0}.
	\end{align}
	Hence, we have
	\begin{align}\label{rescriere}
	\Big(h&+{\rm K}\,\dfrac{h^3}{12}\Big)\,
	W_{\mathrm{shell}}\big(    \mathcal{E}_{m,s} \big)
	-\dfrac{h^3}{3} \mathrm{ H}\,\mathcal{W}_{\mathrm{shell}}  \big(  \mathcal{E}_{m,s} ,
	\mathcal{E}_{m,s}{\rm B}_{y_0}+{\rm C}_{y_0}\, \mathcal{K}_{e,s} \big)+
	\dfrac{h^3}{6}\, \mathcal{W}_{\mathrm{shell}}  \big(  \mathcal{E}_{m,s} ,
	( \mathcal{E}_{m,s}{\rm B}_{y_0}+{\rm C}_{y_0}\, \mathcal{K}_{e,s}){\rm B}_{y_0} \big)\notag\\
	=& \Big(h+{\rm K}\,\dfrac{h^3}{12}\Big)\,
	W_{\mathrm{shell}}\big(    \mathcal{E}_{m,s} \big)
	-\dfrac{h^3}{3} \mathrm{ H}\,\mathcal{W}_{\mathrm{shell}}  \big(  \mathcal{E}_{m,s} ,
	\mathcal{E}_{m,s}{\rm B}_{y_0}\big)-\dfrac{h^3}{3} \mathrm{ H}\,\mathcal{W}_{\mathrm{shell}}  \big(  \mathcal{E}_{m,s} ,{\rm C}_{y_0}\, \mathcal{K}_{e,s} \big)\\&+
	\,{\rm H}\,\dfrac{h^3}{3}\, \mathcal{W}_{\mathrm{shell}}  \big(  \mathcal{E}_{m,s} ,
	\mathcal{E}_{m,s} {\rm B}_{y_0}\big)-{\rm K}\, \dfrac{h^3}{6}\, \mathcal{W}_{\mathrm{shell}}  \big(  \mathcal{E}_{m,s},\mathcal{E}_{m,s}\big)+\dfrac{h^3}{6}\, \mathcal{W}_{\mathrm{shell}}  \big(  \mathcal{E}_{m,s},{\rm C}_{y_0}\, \mathcal{K}_{e,s}{\rm B}_{y_0} \big)\notag\\
	=& \Big(h-{\rm K}\,\dfrac{h^3}{12}\Big)\,
	W_{\mathrm{shell}}\big(    \mathcal{E}_{m,s} \big)
	-\dfrac{h^3}{3} \mathrm{ H}\,\mathcal{W}_{\mathrm{shell}}  \big(  \mathcal{E}_{m,s} ,{\rm C}_{y_0}\, \mathcal{K}_{e,s} \big)+\dfrac{h^3}{6}\, \mathcal{W}_{\mathrm{shell}}  \big(  \mathcal{E}_{m,s},{\rm C}_{y_0}\, \mathcal{K}_{e,s}{\rm B}_{y_0} \big).\notag
	\end{align}
	Using \eqref{rescriere} and the positive definiteness of the quadratic forms \eqref{quadraticforms} and  the Cauchy--Schwarz  inequality we obtain
	\begin{align}
		W^{(h^3)}(\mathcal{E}_{m,s}, \mathcal{K}_{e,s})\geq \, &\Big(h-{\rm K}\,\dfrac{h^3}{12}\Big)
	W_{\mathrm{shell}}\big(    \mathcal{E}_{m,s} \big)+\dfrac{h^3}{12}\,
	W_{\mathrm{shell}}  \big(   \mathcal{E}_{m,s} \, {\rm B}_{y_0} +   {\rm C}_{y_0} \mathcal{K}_{e,s} \big)\notag\\& -
	\dfrac{h^3}{6}\,  |\mathcal{W}_{\mathrm{shell}}  \big(  \mathcal{E}_{m,s} ,\, {\rm C}_{y_0} \mathcal{K}_{e,s} {\rm B}_{y_0}\big)| -
	\dfrac{h^3}{3}\,|{\rm H}|\,  |\mathcal{W}_{\mathrm{shell}}  \big(  \mathcal{E}_{m,s} ,  \,   {\rm C}_{y_0} \mathcal{K}_{e,s} \big)|\notag\\&+\Big(h-{\rm K}\,\dfrac{h^3}{12}\Big)\,
	W_{\mathrm{curv}}\big(  \mathcal{K}_{e,s} \big)    +  \dfrac{h^3}{12}\,
	W_{\mathrm{curv}}\big(  \mathcal{K}_{e,s}   {\rm B}_{y_0} \,  \big)  \notag
	\\
	\geq \, &\Big(h-{\rm K}\,\dfrac{h^3}{12}\Big)\,
	W_{\mathrm{shell}}\big(    \mathcal{E}_{m,s} \big)-
	\dfrac{1}{6}\,  \left[h\,{W}_{\mathrm{shell}}  \big(  \mathcal{E}_{m,s}\big)\right]^{\frac{1}{2}}\, \left[h^5\,{W}_{\mathrm{shell}}  \big(
	{\rm C}_{y_0}\, \mathcal{K}_{e,s} {\rm B}_{y_0} \big)\right]^{\frac{1}{2}}\\&-
	\dfrac{1}{3}\,|{\rm H}|\,  \left[h^2\,{W}_{\mathrm{shell}}  \big(  \mathcal{E}_{m,s}\big)\right]^{\frac{1}{2}}\, \left[h^4\,{W}_{\mathrm{shell}}  \big(
	{\rm C}_{y_0}\, \mathcal{K}_{e,s}  \big)\right]^{\frac{1}{2}}+\dfrac{h^3}{12}\,
	W_{\mathrm{shell}}  \big(   \mathcal{E}_{m,s} \, {\rm B}_{y_0} +   {\rm C}_{y_0} \mathcal{K}_{e,s} \big)
	\notag\\&+\Big(h-{\rm K}\,\dfrac{h^3}{12}\Big)\,
	W_{\mathrm{curv}}\big(  \mathcal{K}_{e,s} \big) +  \dfrac{h^3}{12}\,
	W_{\mathrm{curv}}\big(  \mathcal{K}_{e,s}   {\rm B}_{y_0} \,  \big). \notag
	\end{align}
	In view of the arithmetic-geometric mean inequality it follows
	\begin{align}
	W^{(h^3)}(\mathcal{E}_{m,s}, \mathcal{K}_{e,s})	\geq \, &\Big(h-{\rm K}\,\dfrac{h^3}{12}\Big)\,
	W_{\mathrm{shell}}\big(    \mathcal{E}_{m,s} \big)-
	\dfrac{1}{12}\, \varepsilon \, h\,{W}_{\mathrm{shell}}  \big(  \mathcal{E}_{m,s}\big)-\dfrac{1}{12\,\varepsilon}\,  h^5\,{W}_{\mathrm{shell}}  \big(
	{\rm C}_{y_0}\, \mathcal{K}_{e,s} {\rm B}_{y_0} \big)\notag\\&-
	\dfrac{1}{6}\,\delta \,|{\rm H}|\,  h^2\,{W}_{\mathrm{shell}}  \big(  \mathcal{E}_{m,s}\big)-\dfrac{1}{6\,\delta} \,|{\rm H}|\,  h^4\,{W}_{\mathrm{shell}}  \big(
	{\rm C}_{y_0}\, \mathcal{K}_{e,s}  \big)+\dfrac{h^3}{12}\,
	W_{\mathrm{shell}}  \big(   \mathcal{E}_{m,s} \, {\rm B}_{y_0} +   {\rm C}_{y_0} \mathcal{K}_{e,s} \big)
	\notag\\&+\Big(h-{\rm K}\,\dfrac{h^3}{12}\Big)\,
	W_{\mathrm{curv}}\big(  \mathcal{K}_{e,s} \big) +  \dfrac{h^3}{12}\,
	W_{\mathrm{curv}}\big(  \mathcal{K}_{e,s}   {\rm B}_{y_0} \,  \big)
	\quad \forall\, \varepsilon>0\  \text{and} \ \delta>0.
	\end{align}

	Using the inequalities $-h^2 |K|>-\frac{1}{4}$ and $-h\, |H|>-\frac{1}{2}$, we obtain
	\begin{align}
		W^{(h^3)}(\mathcal{E}_{m,s}, \mathcal{K}_{e,s})\geq \, &
	\dfrac{h}{12}	\Big(\dfrac{47}{4}-\delta-
	\varepsilon\Big)\, {W}_{\mathrm{shell}}  \big(  \mathcal{E}_{m,s}\big)+\dfrac{h^3}{12}\, {W}_{\mathrm{shell}}  \big(
	\mathcal{E}_{m,s}{\rm B}_{y_0}+{\rm C}_{y_0}\, \mathcal{K}_{e,s}  \big)\notag\\&-\dfrac{1}{12\,\delta} \, h^3\,{W}_{\mathrm{shell}}  \big(
	{\rm C}_{y_0}\, \mathcal{K}_{e,s}  \big)-\dfrac{1}{12\,\varepsilon}\,  h^5\,{W}_{\mathrm{shell}}  \big(
	{\rm C}_{y_0}\, \mathcal{K}_{e,s} {\rm B}_{y_0} \big)\\&+\dfrac{47\,h}{48}\,
	W_{\mathrm{curv}}\big(  \mathcal{K}_{e,s} \big) +  \dfrac{h^3}{12}\,
	W_{\mathrm{curv}}\big(  \mathcal{K}_{e,s}   {\rm B}_{y_0} \,  \big) \quad \forall\, \varepsilon>0\  \text{and} \ \delta>0.\notag
	\end{align}
	In view of  \eqref{condition} and \eqref{pozitivdef} and 	since the Frobenius norm is sub-multiplicative, we deduce
	\begin{align}\label{dupac}
	W^{(h^3)}(\mathcal{E}_{m,s}, \mathcal{K}_{e,s})\geq\, &
	\dfrac{h}{12}	\Big(\dfrac{47}{4}-\delta-
	\varepsilon\Big)\,  c_1^+ \lVert \mathcal{E}_{m,s}\rVert ^2+\dfrac{h^3}{12}\, c_1^+ \lVert
	\mathcal{E}_{m,s}{\rm B}_{y_0}+{\rm C}_{y_0}\, \mathcal{K}_{e,s}  \rVert ^2\notag\\&-\dfrac{1}{12\,\delta} \, h^3\,C_1^+\lVert
	{\rm C}_{y_0}\rVert ^2\, \lVert \mathcal{K}_{e,s}\rVert ^2-\dfrac{1}{12\,\varepsilon }\,  h^5\,C_1^+\lVert
	{\rm C}_{y_0}\rVert ^2\, \lVert \mathcal{K}_{e,s}\,{\rm B}_{y_0} \rVert ^2\\&+\dfrac{47\,h}{48}\,c_2^+\lVert  \mathcal{K}_{e,s}\rVert ^2 +  \dfrac{h^3}{12}\,c_2^+\lVert    \mathcal{K}_{e,s}   {\rm B}_{y_0}\rVert ^2 \quad \forall\, \varepsilon>0\  \text{and} \ \delta>0 \ \text{such that}\  \dfrac{47}{4}>\delta+
	\varepsilon.\notag
	\end{align}

Since $
	\lVert C_{y_0}\rVert ^2=2
	$,  the estimate \eqref{dupac} becomes
	\begin{align}\label{oldineq}
	W^{(h^3)}(\mathcal{E}_{m,s}, \mathcal{K}_{e,s})\geq\,&
	\dfrac{h}{12} \Big(\dfrac{47}{4}-\delta-
	\varepsilon \Big)\,\,\,  c_1^+ \lVert \mathcal{E}_{m,s}\rVert ^2+\dfrac{h^3}{12}\, c_1^+ \lVert
	\mathcal{E}_{m,s}{\rm B}_{y_0}+{\rm C}_{y_0}\, \mathcal{K}_{e,s}  \rVert ^2 \notag\\&+ C_1^+\frac{ h^3}{6}\Big(\dfrac{47}{8}\,\frac{1}{h^2}\,\frac{c_2^+}{C_1^+}-\dfrac{1}{\delta} \,\Big)\lVert  \mathcal{K}_{e,s}\rVert ^2+ C_1^+ \frac{h^5}{6}\,\Big(\dfrac{1}{2}\,
	\frac{1}{h^2}\,\frac{c_2^+}{C_1^+}-\dfrac{1}{\varepsilon }\Big)\lVert    \mathcal{K}_{e,s}   {\rm B}_{y_0}\rVert ^2,
	\end{align}
	for all $\varepsilon>0\  \text{and} \ \delta>0$ \ \text{such that}\  $\dfrac{47}{4}>\delta+
	\varepsilon$.
	According to the properties presented in the Appendix, we have
	\begin{align}\label{B2}
	\lVert B_{y_0}\rVert ^2&=\bigl\langle  B_{y_0}^2, \id \bigr\rangle =2\, {\rm H}\,\bigl\langle   B_{y_0},\id  \bigr\rangle -{\rm K} \, \bigl\langle  A_{y_0}, \id \bigr\rangle =4\, {\rm H}^2-2\,{\rm K}.
	\end{align}
	Therefore, from \eqref{oldineq} it follows
	\begin{align}
W^{(h^3)}(\mathcal{E}_{m,s}, \mathcal{K}_{e,s})\geq\,  &
	\dfrac{h}{12} \Big(\dfrac{47}{4}-\delta-
	\varepsilon \Big)\,  c_1^+ \lVert \mathcal{E}_{m,s}\rVert ^2+\dfrac{h^3}{12}\, c_1^+ \lVert
	\mathcal{E}_{m,s}{\rm B}_{y_0}+{\rm C}_{y_0}\, \mathcal{K}_{e,s}  \rVert ^2\\&+h\,\Big[\dfrac{47}{48}\,c_2^+-\dfrac{1}{6\,\delta} \, h^2\,C_1^+\, -\dfrac{1}{6\,\varepsilon}\,  h^4\,C_1^+\,(4\, {\rm H}^2-2\,{\rm K})\Big]\lVert  \mathcal{K}_{e,s}\rVert ^2 +  h^3\dfrac{1}{12}\,c_2^+\lVert    \mathcal{K}_{e,s}   {\rm B}_{y_0}\rVert ^2.\notag
	\end{align}
	Using again that $h$ is small,
	we obtain $-h^2\,(4\, {\rm H}^2-2\,{\rm K})>-\frac{3}{2}$ and
	\begin{align}
W^{(h^3)}(\mathcal{E}_{m,s}, \mathcal{K}_{e,s})\geq\,&
	\dfrac{h}{12} \Big(\dfrac{47}{4}-\delta-
	\varepsilon \Big)\,  c_1^+ \lVert \mathcal{E}_{m,s}\rVert ^2+\dfrac{h^3}{12}\, c_1^+ \lVert
	\mathcal{E}_{m,s}{\rm B}_{y_0}+{\rm C}_{y_0}\, \mathcal{K}_{e,s}  \rVert ^2\\&+\frac{h^3}{12}\,C_1^+\,\Big[\dfrac{47}{4}\frac{c_2^+}{h^2\,C_1^+}-\dfrac{2}{\delta} \, \, -\dfrac{3}{\varepsilon}\,  \Big]\lVert  \mathcal{K}_{e,s}\rVert ^2 +  h^{3}\dfrac{1}{12}\,c_2^+\lVert    \mathcal{K}_{e,s}   {\rm B}_{y_0}\rVert ^2\notag\\
	\geq\,&
	\dfrac{h}{12} \Big(\dfrac{47}{4}-\delta-
	\varepsilon \Big)\,  c_1^+ \lVert \mathcal{E}_{m,s}\rVert ^2+\frac{h^3}{12}\,C_1^+\,\Big[\dfrac{47}{4}\frac{\,c_2^+}{h^2\,C_1^+}-\dfrac{2}{\delta} \, \, -\dfrac{3}{\varepsilon}\,  \Big]\lVert  \mathcal{K}_{e,s}\rVert ^2 .\notag
	\end{align}
	We consider $\delta=\gamma \, \varepsilon$ and we choose $\delta>0$ and $\gamma>0$ such that
	$
	\frac{47}{4\,(1+\gamma)}>\varepsilon>\frac{2+3\, \gamma}{\gamma}\frac{4}{47}\frac{h^2\,C_1^+}{c_2^+}.
	$
	This choice of the variable $\varepsilon$ is possible  if and
	only if
	$
	\left(\frac{47}{4}\right)^2\frac{\gamma}{(2+3\, \gamma)\,(1+\gamma)}>\frac{h^2\,C_1^+}{c_2^+}.
	$
	At this point we use that
	$
	\max_{\gamma>0}\frac{\gamma}{(2+3\, \gamma)\,(1+\gamma)}=5-2 \sqrt{6},
	$
	and we take $\gamma=\sqrt{\frac{2}{3}}$. Note that the considered values for $\gamma$ and $\varepsilon$ assure that the condition  $\dfrac{47}{4}>\delta+
	\varepsilon$ is automatically satisfied. Hence, we arrive at the following condition on the thickness $h$:
	\begin{align}
	h^2<\left(\frac{47}{4}\right)^2(5-2 \sqrt{6})\frac{ {c_2^+}}{C_1^+}\approx13.94\frac{ {c_2^+}}{C_1^+},
	\end{align}
	which proves the coercivity if the condition i) is satisfied.

	Next, we consider coercivity for condition ii). Under the hypotheses of the theorem, using also the positive definiteness of the quadratic forms \eqref{quadraticforms} and  the Cauchy--Schwarz  inequality,  we have
	\begin{align}
	W^{(h^3)}(\mathcal{E}_{m,s}, \mathcal{K}_{e,s})
	\geq\,&\Big(h+{\rm K}\,\dfrac{h^3}{12}\Big)\,
	W_{\mathrm{shell}}\big(    \mathcal{E}_{m,s} \big)-
	\dfrac{1}{3}\,|\mathrm{H}|\,  \left[h^2\,{W}_{\mathrm{shell}}  \big(  \mathcal{E}_{m,s}\big)\right]^{\frac{1}{2}}\, \left[h^4\,{W}_{\mathrm{shell}}  \big(
	\mathcal{E}_{m,s}{\rm B}_{y_0}+{\rm C}_{y_0}\, \mathcal{K}_{e,s}  \big)\right]^{\frac{1}{2}}\notag\\&+\dfrac{h^3}{12}\,
	W_{\mathrm{shell}}  \big(   \mathcal{E}_{m,s} \, {\rm B}_{y_0} +   {\rm C}_{y_0} \mathcal{K}_{e,s} \big)-\dfrac{1}{6}\,  \Big[h{W}_{\mathrm{shell}}  \big(  \mathcal{E}_{m,s}\big)]^{\frac{1}{2}}
	\, \Big[h^5{W}_{\mathrm{shell}}  \big((  \mathcal{E}_{m,s}{\rm B}_{y_0}+{\rm C}_{y_0}\, \mathcal{K}_{e,s}){\rm B}_{y_0} \big)]^{\frac{1}{2}}
	\notag\\&+\Big(h-{\rm K}\,\dfrac{h^3}{12}\Big)\,
	W_{\mathrm{curv}}\big(  \mathcal{K}_{e,s} \big).
	\end{align}
	Using the arithmetic-geometric mean inequality in the previous estimate, it follows
	\begin{align}
		W^{(h^3)}(\mathcal{E}_{m,s}, \mathcal{K}_{e,s})\geq\,&\Big(h+{\rm K}\,\dfrac{h^3}{12}-
	\dfrac{h^2}{6}\varepsilon\,|\mathrm{H}|\Big)\,\,\,  {W}_{\mathrm{shell}}  \big(  \mathcal{E}_{m,s}\big)+\Big(\dfrac{h^3}{12}\,-\dfrac{h^4}{6\, \varepsilon}\,\,|\mathrm{H}|\Big)\, {W}_{\mathrm{shell}}  \big(
	\mathcal{E}_{m,s}{\rm B}_{y_0}+{\rm C}_{y_0}\, \mathcal{K}_{e,s}  \big)\notag\\&-\dfrac{h}{12}\delta\,  {W}_{\mathrm{shell}}  \big(  \mathcal{E}_{m,s}\big)-
	\dfrac{h^5}{12\, \delta}{W}_{\mathrm{shell}}  \big(  \mathcal{E}_{m,s}{\rm B}_{y_0}+{\rm C}_{y_0}\, \mathcal{K}_{e,s}){\rm B}_{y_0} \big)\\&+\Big(h-{\rm K}\,\dfrac{h^3}{12}\Big)\,
	W_{\mathrm{curv}}\big(  \mathcal{K}_{e,s} \big)\notag.
	\end{align}
	We choose $\delta=8$ and $\varepsilon=2$ to obtain that
	\begin{align}
	W^{(h^3)}(\mathcal{E}_{m,s}, \mathcal{K}_{e,s})
	\geq\,&h\,\Big[\dfrac{1}{3}-{\rm K}\,\dfrac{h^2}{12}-
	\dfrac{h}{3}\,|\mathrm{H}|\Big]\,  {W}_{\mathrm{shell}}  \big(  \mathcal{E}_{m,s}\big)+\dfrac{h^3}{12}\Big(1\,-h\,|\mathrm{H}|\Big)\, {W}_{\mathrm{shell}}  \big(
	\mathcal{E}_{m,s}{\rm B}_{y_0}+{\rm C}_{y_0}\, \mathcal{K}_{e,s}  \big)\notag\\&-
	\dfrac{h^5}{96}\,\,
	W_{\mathrm{shell}} \big((  \mathcal{E}_{m,s} \, {\rm B}_{y_0} +  {\rm C}_{y_0} \mathcal{K}_{e,s} )   {\rm B}_{y_0} \,\big)+\Big(h-|{\rm K}|\,\dfrac{h^3}{12}\Big)\,
	W_{\mathrm{curv}}\big(  \mathcal{K}_{e,s} \big).
	\end{align}

	Let us consider $a>0$ and impose  $h\,|H|<\frac{1}{a}$,  $h^2\,|K|<\frac{1}{a^2}$. Therefore, using   \eqref{pozitivdef} and since the Frobenius norm is sub-multiplicative we deduce
	\begin{align}
W^{(h^3)}(\mathcal{E}_{m,s}, \mathcal{K}_{e,s})
	\geq\,&h\,\dfrac{4\, a^2-4\, a-1}{12\, a^2}\,  c_1^+ \lVert \mathcal{E}_{m,s}\rVert ^2+h\,\dfrac{12\, a^2-1}{12\,a^2}\,
	c_2^+  \lVert \mathcal{K}_{e,s} \rVert ^2\\
	&+\dfrac{h^3}{12}\dfrac{a-1}{a}\, c_1^+ \lVert
	\mathcal{E}_{m,s}{\rm B}_{y_0}+{\rm C}_{y_0}\, \mathcal{K}_{e,s} \rVert ^2-
	\dfrac{h^5}{96}\,\,
	C_1^+\lVert   \mathcal{E}_{m,s} \, {\rm B}_{y_0} +  {\rm C}_{y_0} \mathcal{K}_{e,s} \rVert ^2\,\lVert    {\rm B}_{y_0} \rVert ^2.\notag
	\end{align}
	Moreover, using \eqref{B2} we deduce $-h^2\lVert B_{y_0}\rVert^2>-\frac{6}{a^2}$ and the inequality
	\begin{align}\label{vreaM}
	W^{(h^3)}(\mathcal{E}_{m,s}, \mathcal{K}_{e,s})
	\geq\,&h\,\dfrac{4\, a^2-4\, a-1}{12\, a^2}\,  c_1^+ \lVert \mathcal{E}_{m,s}\rVert ^2+h\,\dfrac{12\, a^2-1}{12\,a^2}\,
	c_2^+  \lVert \mathcal{K}_{e,s} \rVert ^2\\
	&+\dfrac{h^3}{12}	C_1^+\dfrac{a-1}{a}\,\left[\frac{ c_1^+}{	C_1^+} -\frac{3}{4\,a (a-1)}\,\right]\lVert   \mathcal{E}_{m,s} \, {\rm B}_{y_0} +  {\rm C}_{y_0} \mathcal{K}_{e,s} \rVert ^2\,.\notag
	\end{align}
	Hence, choosing  $a>1 + \frac{\sqrt{2}}{2}$ we assure that
	$
	\frac{4\, a^2-4\, a-1}{12\, a^2}\, >0,$   $\frac{a-1}{a}>0$  \text{and}   $\frac{12\, a^2-1}{12\,a^2}>0.
	$
	A suitable  $a>1 + \frac{\sqrt{2}}{2}$ should  satisfy
	$
	\frac{ c_1^+}{	C_1^+} -\frac{3}{4\,a (a-1)}>0,
	$
	which is true if $a$ is such that
	$
	a>\frac{1+\sqrt{1+3\frac{C_1^+}{c_1^+}}}{2}>1.
	$
	Therefore, if $a>\max\Big\{1 + \frac{\sqrt{2}}{2},\frac{1+\sqrt{1+3\frac{C_1^+}{c_1^+}}}{2}\Big\}$,
	then  inequality \eqref{vreaM}  yields
	\begin{align}
	W^{(h^3)}(\mathcal{E}_{m,s}, \mathcal{K}_{e,s})
	\geq\,&h\,\dfrac{4\, a^2-4\, a-1}{12\, a^2}\,  c_1^+ \lVert \mathcal{E}_{m,s}\rVert ^2+h\,\dfrac{12\, a^2-1}{12\,a^2}\,
	c_2^+  \lVert \mathcal{K}_{e,s} \rVert ^2.\notag\qedhere
	\end{align}

\end{proof}

\begin{corollary}{\rm [Uniform convexity for the theory including terms up to order $O(h^3)$]}\label{corconh3} Under the hypotheses of Proposition \ref{coerh3},   the  energy density
$	W^{(h^3)}(\mathcal{E}_{m,s}, \mathcal{K}_{e,s})$
	is uniformly convex in $(\mathcal{E}_{m,s}, \mathcal{K}_{e,s})$, i.e.,  there exists a constant $a_1^+>0$ such that
	\begin{align}
	D^2\,	W^{(h^3)}(\mathcal{E}_{m,s}, \mathcal{K}_{e,s}).\,[(H_1,H_2),(H_1,H_2)]\geq a_1^+(\lVert H_1\rVert^2+\lVert H_2\rVert^2) \qquad \forall\, H_1,H_2\in \mathbb{R}^{3\times 3}.
	\end{align}
\end{corollary}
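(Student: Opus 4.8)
The plan is to mimic the proof of Corollary~\ref{corconh5}. The first step is to observe that the energy density $W^{(h^3)}(\mathcal{E}_{m,s},\mathcal{K}_{e,s})$ given in \eqref{h3energy} is a homogeneous quadratic form in the pair $(\mathcal{E}_{m,s},\mathcal{K}_{e,s})\in\mathbb{R}^{3\times3}\times\mathbb{R}^{3\times3}$. Indeed, each summand is either one of the quadratic forms $W_{\mathrm{shell}}$ or $W_{\mathrm{curv}}$ from \eqref{quadraticforms} evaluated at an argument that is \emph{linear} in $(\mathcal{E}_{m,s},\mathcal{K}_{e,s})$ — namely at $\mathcal{E}_{m,s}$, at $\mathcal{E}_{m,s}{\rm B}_{y_0}+{\rm C}_{y_0}\mathcal{K}_{e,s}$, at $\mathcal{K}_{e,s}$, or at $\mathcal{K}_{e,s}{\rm B}_{y_0}$ — or one of the symmetric bilinear forms $\mathcal{W}_{\mathrm{shell}}(\cdot,\cdot)$ evaluated at a pair of such linear arguments; using the rewriting \eqref{rescriere}, the argument $(\mathcal{E}_{m,s}{\rm B}_{y_0}+{\rm C}_{y_0}\mathcal{K}_{e,s}){\rm B}_{y_0}$ is likewise linear in the pair, since ${\rm A}_{y_0},{\rm B}_{y_0},{\rm B}_{y_0}^2,{\rm C}_{y_0}$ are fixed matrices determined by the initial configuration. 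Composing a quadratic or bilinear form with a linear map again yields a quadratic or bilinear form, so $W^{(h^3)}$ is quadratic in $(\mathcal{E}_{m,s},\mathcal{K}_{e,s})$.

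Next I would invoke the elementary differentiation identity already established for representative terms in the proof of Corollary~\ref{corconh5}: for a quadratic form the Hessian reproduces the form up to the factor two. Term by term, using only $D_F(\lVert F\rVert^2).H=2\langle F,H\rangle$, the bilinearity of $\langle\cdot,\cdot\rangle$, and the linearity of $\sym$, $\skw$, $\dev$, $\tr$ and of the maps $S\mapsto S{\rm B}_{y_0}$ and $S\mapsto{\rm C}_{y_0}S$ — exactly as in the computations leading to \eqref{dervu} — one obtains, for all $H_1,H_2\in\mathbb{R}^{3\times3}$,
\[
D^2\,W^{(h^3)}(\mathcal{E}_{m,s},\mathcal{K}_{e,s}).\,[(H_1,H_2),(H_1,H_2)]=2\,W^{(h^3)}(H_1,H_2).
\]

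Finally, under hypothesis i) or ii) on the thickness $h$, Proposition~\ref{coerh3} supplies the coercivity estimate $W^{(h^3)}(H_1,H_2)\ge a_1^+\big(\lVert H_1\rVert^2+\lVert H_2\rVert^2\big)$ valid for all $H_1,H_2\in\mathbb{R}^{3\times3}$ (the coercivity inequality in Proposition~\ref{coerh3} holds pointwise in the matrix arguments). Combining this with the Hessian identity of the previous step gives
\[
D^2\,W^{(h^3)}(\mathcal{E}_{m,s},\mathcal{K}_{e,s}).\,[(H_1,H_2),(H_1,H_2)]\ge 2\,a_1^+\big(\lVert H_1\rVert^2+\lVert H_2\rVert^2\big),
\]
which is the asserted uniform convexity (after the harmless relabeling of $2a_1^+$ as $a_1^+$). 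I do not expect a genuine obstacle here: the only point requiring a little care is the bookkeeping in the first step, confirming that the mixed terms $-\tfrac{h^3}{3}\,{\rm H}\,\mathcal{W}_{\mathrm{shell}}\big(\mathcal{E}_{m,s},\mathcal{E}_{m,s}{\rm B}_{y_0}+{\rm C}_{y_0}\mathcal{K}_{e,s}\big)$ and $\tfrac{h^3}{6}\,\mathcal{W}_{\mathrm{shell}}\big(\mathcal{E}_{m,s},(\mathcal{E}_{m,s}{\rm B}_{y_0}+{\rm C}_{y_0}\mathcal{K}_{e,s}){\rm B}_{y_0}\big)$ remain bilinear in $(\mathcal{E}_{m,s},\mathcal{K}_{e,s})$, which they are; with that in hand the argument of Corollary~\ref{corconh5} transfers essentially verbatim.
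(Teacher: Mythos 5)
Your argument is correct and is precisely what the paper intends when it writes ``See Corollary \ref{corconh5}'': since $W^{(h^3)}$ is a quadratic form in $(\mathcal{E}_{m,s},\mathcal{K}_{e,s})$, its Hessian equals $2\,W^{(h^3)}$, and then the pointwise coercivity estimate of Proposition \ref{coerh3} upgrades directly to uniform convexity. You have spelled out the bookkeeping (linearity of the inner arguments, bilinearity of the mixed $\mathcal{W}_{\mathrm{shell}}$ terms) that the paper leaves implicit, but the route is the same.
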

\begin{proof}
	See Corollary \ref{corconh5}.
\end{proof}
Therefore, an existence result similar to Theorem \ref{th1} holds true for the theory including terms up to order $O (h^3)$:
\begin{theorem}\label{th11}{\rm [Existence result for the theory including terms up to order $O(h^3)$]}
	Assume that the external loads satisfy the conditions
	\begin{equation}\label{24a}
	{f}\in\textrm{\rm L}^2(\omega,\mathbb{R}^3),\qquad  t\in \textrm{\rm L}^2(\gamma_t,\mathbb{R}^3),
	\end{equation}
 the boundary data satisfy the conditions
	\begin{equation}\label{25a}
	{m}^*\in{\rm H}^1(\omega ,\mathbb{R}^3),\qquad \overline{Q}_{e,s}^*\in{\rm H}^1(\omega, {\rm SO}(3)),
	\end{equation}
and that the following conditions concerning the initial configuration are fulfilled: $\,y_0:\omega\subset\mathbb{R}^2\rightarrow\mathbb{R}^3$ is a continuous injective mapping and
	\begin{align}\label{26a}
	{y}_0&\in{\rm H}^1(\omega ,\mathbb{R}^3),\qquad   {Q}_{0}(0)\in{\rm H}^1(\omega, {\rm SO}(3)),\qquad
	\nabla_x\Theta(0)\in {\rm L}^\infty(\omega ,\mathbb{R}^{3\times 3}),\qquad \det[\nabla_x\Theta(0)] \geq\, a_0 >0\,,
	\end{align}
	where $a_0$ is a constant.
	 Assume that the constitutive coefficients are  such that $\mu>0, \,\mu_{\rm c}>0$, $2\,\lambda+\mu> 0$, $b_1>0$, $b_2>0$ and $b_3>0$.
	Then, if the thickness $h$ satisfies at least one of the following conditions:
	\begin{enumerate}
		\item[i)] $h|\kappa_1|<\frac{1}{2},\qquad h|\kappa_2|<\frac{1}{2}$ \quad and \quad
		$h^2<	\left(\dfrac{47}{4}\right)^2(5-2 \sqrt{6})\frac{ {c_2^+}}{C_1^+}$;
		\item[ii)] $h|\kappa_1|<\frac{1}{a}$ ,\qquad $h|\kappa_2|<\frac{1}{a}$ \quad with\quad  $a>\max\Big\{1 + \frac{\sqrt{2}}{2},\frac{1+\sqrt{1+3\frac{C_1^+}{c_1^+}}}{2}\Big\}$,
	\end{enumerate}
where  $c_2^+$  denotes the smallest eigenvalue  of
$
W_{\mathrm{curv}}(  S ),
$
and $c_1^+$ and $ C_1^+>0$ denote the smallest and the biggest eigenvalues of the quadratic form $W_{\mathrm{shell}}(  S)$,
	 the minimization problem corresponding to the energy density defined by \eqref{eh3} and \eqref{h3energy}  admits at least one minimizing solution pair
	$(m,\overline{Q}_{e,s})\in \mathcal{A}$.
\end{theorem}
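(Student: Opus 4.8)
The plan is to follow the direct method of the calculus of variations exactly as in the proof of Theorem~\ref{th1}, the only structural change being that the coercivity and convexity statements for the $O(h^5)$ density are replaced by their $O(h^3)$ counterparts, i.e.\ Proposition~\ref{coerh3} and Corollary~\ref{corconh3}, whose hypotheses are precisely the thickness conditions i) and ii) assumed here. Thus the first step is to record that, under either i) or ii), one has $W^{(h^3)}(\mathcal{E}_{m,s},\mathcal{K}_{e,s})\geq a_1^+(\lVert\mathcal{E}_{m,s}\rVert^2+\lVert\mathcal{K}_{e,s}\rVert^2)$ and that $D^2 W^{(h^3)}$ is positive definite, so that $W^{(h^3)}$ is a quadratic, convex and coercive function of the pair of strain measures.

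Next I would bound the potential of the external loads: from \eqref{24a}, the boundedness of the operators $\Lambda_\omega$ and $\Lambda_{\gamma_t}$, and $\lVert\overline{Q}_{e,s}\rVert^2=3$ on ${\rm SO}(3)$, one obtains $\lvert\overline{\Pi}(m,\overline{Q}_{e,s})\rvert\leq C\,(\lVert m\rVert_{{\rm H}^1(\omega)}+1)$ exactly as in \eqref{27}. Combining this with the coercivity, with the lower bound $\lVert\mathcal{E}_{m,s}\rVert^2_{{\rm L}^2(\omega)}\geq\lambda_0^2\lVert\nabla m\rVert^2_{{\rm L}^2(\omega)}-C_1\lVert\nabla m\rVert_{{\rm L}^2(\omega)}+C_2$ coming from the polar decomposition of $\nabla_x\Theta(0)$ and the hypotheses \eqref{26a}, with $\det[\nabla_x\Theta(0)]\geq a_0>0$, and with the Poincar\'e inequality, one arrives at $I(m,\overline{Q}_{e,s})\geq C_1\lVert m-m^*\rVert^2_{{\rm H}^1(\omega)}+C_2$; hence $I$ is bounded below on $\mathcal{A}$. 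I then pick an infimizing sequence $(m_k,\overline{Q}_k)$ with $I(m_k,\overline{Q}_k)\leq I(m^*,\overline{Q}_{e,s}^*)<\infty$, the right-hand side being finite by \eqref{25a}; the last estimate makes $\{m_k\}$ bounded in ${\rm H}^1(\omega,\mathbb{R}^3)$, so after passing to a subsequence $m_k\rightharpoonup\widehat m$ in ${\rm H}^1$ and, by Rellich, $m_k\to\widehat m$ in ${\rm L}^2$.

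For the rotations I would again use that coercivity together with the load bound force the sequence $\{\mathcal{K}_{e,s}^{(k)}\}$ to be bounded in ${\rm L}^2(\omega,\mathbb{R}^{3\times3})$; via Nye's formula \eqref{Nye1} relating the wryness tensor to the dislocation density tensor $\overline{Q}_k^T\,\Curl\,\overline{Q}_k$, and the fact that the $\Curl$-operator controls the full gradient on ${\rm SO}(3)$, this gives $\{\overline{Q}_k\}$ bounded in ${\rm H}^1(\omega,\mathbb{R}^{3\times3})$; extracting a subsequence, $\overline{Q}_k\rightharpoonup\widehat{\overline{Q}}_{e,s}$ in ${\rm H}^1$ and strongly in ${\rm L}^2$, and the usual ${\rm L}^1$-argument shows $\widehat{\overline{Q}}_{e,s}\widehat{\overline{Q}}_{e,s}^T=\id_3$, so $\widehat{\overline{Q}}_{e,s}\in{\rm H}^1(\omega,{\rm SO}(3))$; the traces give $\widehat m=m^*$ and $\widehat{\overline{Q}}_{e,s}=\overline{Q}_{e,s}^*$ on $\gamma_d$, i.e.\ $(\widehat m,\widehat{\overline{Q}}_{e,s})\in\mathcal{A}$. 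Then, verbatim as in \eqref{43}--\eqref{convK}, testing against ${\rm C}_0^\infty$ functions and exploiting the strong ${\rm L}^2$-convergence of $\overline{Q}_k$ (hence of $\overline{R}_k=\overline{Q}_k Q_0$), one identifies the weak ${\rm L}^2$-limits of $\mathcal{E}_{m,s}^{(k)}$ and $\mathcal{K}_{e,s}^{(k)}$ as $\widehat{\mathcal{E}}_{m,s}$ and $\widehat{\mathcal{K}}_{e,s}$.

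Finally, since $W^{(h^3)}$ is convex in $(\mathcal{E}_{m,s},\mathcal{K}_{e,s})$, weak lower semicontinuity of the nonnegative bulk term gives $\int_\omega W^{(h^3)}(\widehat{\mathcal{E}}_{m,s},\widehat{\mathcal{K}}_{e,s})\,\det[\nabla_x\Theta(0)]\,{\rm d}a\leq\liminf_k\int_\omega W^{(h^3)}(\mathcal{E}_{m,s}^{(k)},\mathcal{K}_{e,s}^{(k)})\,\det[\nabla_x\Theta(0)]\,{\rm d}a$, while the continuity of the load functionals together with \eqref{39} and \eqref{40} gives $\overline{\Pi}(m_k,\overline{Q}_k)\to\overline{\Pi}(\widehat m,\widehat{\overline{Q}}_{e,s})$; hence $I(\widehat m,\widehat{\overline{Q}}_{e,s})\leq\liminf_k I(m_k,\overline{Q}_k)=\inf_{\mathcal{A}}I$, and $(\widehat m,\widehat{\overline{Q}}_{e,s})$ is the desired minimizing pair. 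I expect no genuinely new obstacle here: the only delicate point, namely obtaining coercivity of $W^{(h^3)}$ --- which fails for arbitrary thickness and needs either a stronger assumption on the constitutive constants or a relation linking $h$ to the characteristic length --- has already been settled in Proposition~\ref{coerh3}, so the remainder is essentially a line-by-line repetition of the argument proving Theorem~\ref{th1}.
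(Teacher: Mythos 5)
Your proposal is correct and takes the same approach the paper intends: the paper omits the proof of Theorem~\ref{th11}, noting only that it follows \emph{verbatim} from the proof of Theorem~\ref{th1} once Proposition~\ref{propcoerh5} and Corollary~\ref{corconh5} are replaced by Proposition~\ref{coerh3} and Corollary~\ref{corconh3}, which is exactly the substitution you carry out and justify step by step. You correctly observe that all the remaining ingredients (the load estimate, the polar-decomposition lower bound, the Poincar\'e inequality, the Nye-formula/Curl argument for the rotations, the identification of weak limits, and lower semicontinuity via convexity) are independent of the specific form of the density and so transfer without change.
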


\begin{corollary}\label{th1nD}{\rm [Existence result for the theory including terms up to order $O(h^3)$ without boundary condition on the microrotation field]}
Under the hypotheses of Theorem \ref{th11}, the minimization problem corresponding to the energy density defined by \eqref{eh3} and \eqref{h3energy}  admits at least one minimizing solution pair
	\begin{align}(m,\overline{Q}_{e,s})\in
	\mathcal{A}=\big\{(m,\overline{Q}_{e,s})\in{\rm H}^1(\omega, \mathbb{R}^3)\times{\rm H}^1(\omega, {\rm SO}(3))\,\,\big|\,\,\,  m\big|_{ \gamma_d}=m^* \big\}\,.
	\end{align}
\end{corollary}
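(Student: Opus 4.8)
The plan is to run the proof of Theorem~\ref{th11} (which itself follows the pattern of Theorem~\ref{th1}) essentially verbatim, and to observe that the clamping condition $\overline{Q}_{e,s}\big|_{\gamma_d}=\overline{Q}_{e,s}^*$ entered that proof at exactly one place: to identify the trace of the weak limit $\widehat{\overline{Q}}_{e,s}$ on $\gamma_d$. Once the admissible class $\mathcal{A}$ no longer records that constraint, this identification step is simply omitted, and everything else carries over. Concretely, by Proposition~\ref{coerh3} and Corollary~\ref{corconh3} the density $W^{(h^3)}(\mathcal{E}_{m,s},\mathcal{K}_{e,s})$ is, under the stated hypotheses on $h$ and the constitutive coefficients, a uniformly convex and coercive quadratic form in $(\mathcal{E}_{m,s},\mathcal{K}_{e,s})$; together with $\det[\nabla_x\Theta(0)]\ge a_0>0$ and the estimates \eqref{27} and \eqref{Elambda} this yields, exactly as in \eqref{36}, a bound $I(m,\overline{Q}_{e,s})\ge C_1\lVert m-m^*\rVert_{{\rm H}^1(\omega)}^2+C_2$ valid on all of ${\rm H}^1(\omega,\mathbb{R}^3)\times{\rm H}^1(\omega,{\rm SO}(3))$, in particular on the enlarged $\mathcal{A}$. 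Since $(m^*,\overline{Q}_{e,s}^*)$ belongs to the enlarged $\mathcal{A}$ and has finite energy by \eqref{25a}, the infimum of $I$ over $\mathcal{A}$ is finite, and one extracts a minimizing sequence $\{(m_k,\overline{Q}_k)\}_{k\ge 1}\subset\mathcal{A}$ with $I(m_k,\overline{Q}_k)\le I(m^*,\overline{Q}_{e,s}^*)<\infty$.

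Next I would reproduce the a priori bounds. Coercivity forces $\{m_k\}$ bounded in ${\rm H}^1(\omega,\mathbb{R}^3)$ and $\{\mathcal{K}_{e,s}^{(k)}\}$ bounded in ${\rm L}^2(\omega,\mathbb{R}^{3\times 3})$; then Nye's formula \eqref{Nye1}, the identity $\lVert\overline{Q}_k\rVert^2=3$, and the fact that the $\Curl$-operator controls the gradient on ${\rm SO}(3)$ give $\{\overline{Q}_k\}$ bounded in ${\rm H}^1(\omega,\mathbb{R}^{3\times3})$ --- none of these steps invokes any boundary condition on the rotation. Passing to subsequences (not relabeled) one obtains $m_k\rightharpoonup\widehat{m}$ in ${\rm H}^1(\omega,\mathbb{R}^3)$, $m_k\to\widehat{m}$ in ${\rm L}^2$, $\overline{Q}_k\rightharpoonup\widehat{\overline{Q}}_{e,s}$ in ${\rm H}^1(\omega,\mathbb{R}^{3\times3})$, $\overline{Q}_k\to\widehat{\overline{Q}}_{e,s}$ in ${\rm L}^2$, and, exactly as in Theorem~\ref{th1}, $\widehat{\overline{Q}}_{e,s}\in{\rm H}^1(\omega,{\rm SO}(3))$. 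Continuity of the trace operator under weak ${\rm H}^1$-convergence gives $\widehat{m}=m^*$ on $\gamma_d$, hence $(\widehat{m},\widehat{\overline{Q}}_{e,s})$ lies in the enlarged $\mathcal{A}$; the only difference with Theorem~\ref{th11} is that we no longer need, and do not assert, $\widehat{\overline{Q}}_{e,s}=\overline{Q}_{e,s}^*$ on $\gamma_d$.

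Finally, weak lower semicontinuity is obtained as in the proof of Theorem~\ref{th1}: the strain measures converge weakly, $\mathcal{E}_{m,s}^{(k)}\rightharpoonup\widehat{\mathcal{E}}_{m,s}$ and $\mathcal{K}_{e,s}^{(k)}\rightharpoonup\widehat{\mathcal{K}}_{e,s}$ in ${\rm L}^2(\omega,\mathbb{R}^{3\times3})$, so convexity of $W^{(h^3)}$ (Corollary~\ref{corconh3}) yields $\int_\omega W^{(h^3)}(\widehat{\mathcal{E}}_{m,s},\widehat{\mathcal{K}}_{e,s})\,\det[\nabla_x\Theta(0)]\,{\rm d}a\le\liminf_{k\to\infty}\int_\omega W^{(h^3)}(\mathcal{E}_{m,s}^{(k)},\mathcal{K}_{e,s}^{(k)})\,\det[\nabla_x\Theta(0)]\,{\rm d}a$, while continuity of the load potentials together with \eqref{24a} and the strong ${\rm L}^2$-convergences gives $\overline{\Pi}(\widehat{m},\widehat{\overline{Q}}_{e,s})=\lim_{k\to\infty}\overline{\Pi}(m_k,\overline{Q}_k)$. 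Therefore $I(\widehat{m},\widehat{\overline{Q}}_{e,s})\le\liminf_{k\to\infty}I(m_k,\overline{Q}_k)=\inf\{I(m,\overline{Q}_{e,s})\mid(m,\overline{Q}_{e,s})\in\mathcal{A}\}$, and since $(\widehat{m},\widehat{\overline{Q}}_{e,s})\in\mathcal{A}$ this pair is a minimizer.

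There is no genuine obstacle here: the corollary merely records what the footnote to \eqref{boundary} already anticipates, namely that ${\rm SO}(3)$ is a compact manifold, so the Dirichlet datum on the microrotation plays no role in the compactness step. The only point demanding a moment's care is to verify that the chain "$\{\mathcal{K}_{e,s}^{(k)}\}$ bounded $\Rightarrow$ wryness bounded $\Rightarrow$ $\{\Curl\,\overline{Q}_k\}$ bounded $\Rightarrow$ $\{\nabla\overline{Q}_k\}$ bounded" is entirely independent of boundary data, which it is.
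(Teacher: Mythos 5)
Your proposal is correct and matches the approach the paper intends: the paper states this corollary without proof, with the remark (before its $O(h^5)$ analogue and in the footnote to \eqref{boundary}) that the clamping condition on $\overline{Q}_{e,s}$ plays no role in the compactness argument because ${\rm SO}(3)$ is compact, and your rerun of the direct-methods proof of Theorem \ref{th11}, dropping only the trace identification for the rotation, is precisely the argument being invoked.
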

\section{Final comments}\label{Conclusion}\setcounter{equation}{0}
Having the
deformation of the midsurface $m:\omega\subset\mathbb{R}^2\to\mathbb{R}^3$ and the microrotation of the shell
$\overline{Q}_{e,s}:\omega\subset\mathbb{R}^2\to \textrm{SO}(3)$ solving on $\omega$ the minimization (two-dimensional) problem, we get the approximation of the deformation of the initial  three-dimensional body using  the following \textit{6-parameter quadratic ansatz} in the thickness direction for the reconstructed total deformation $\varphi_s:\Omega_h\subset \mathbb{R}^3\rightarrow \mathbb{R}^3$ of the shell-like structure \cite{GhibaNeffPartI}
\begin{align}\label{ansatz}
\varphi_s(x_1,x_2,x_3)\,=\,&m(x_1,x_2)+\bigg(x_3\varrho_m(x_1,x_2)+\dd\frac{x_3^2}{2}\varrho_b(x_1,x_2)\bigg)\overline{Q}_{e,s}(x_1,x_2)\nabla_x\Theta(x_1,x_2,0)\, e_3\, ,
\end{align}
  where $\varrho_m,\,\varrho_b:\omega\subset\mathbb{R}^2\to \mathbb{R}$ allow in principal for symmetric thickness stretch  ($\varrho_m\neq1$) and asymmetric thickness stretch ($\varrho_b\neq 0$) about the midsurface and they are given by
\begin{align}\label{final_rho}
\varrho_m\,=\,&1-\frac{\lambda}{\lambda+2\,\mu}[\bigl\langle  \overline{Q}_{e,s}^T(\nabla m|0)[\nabla_x\Theta(0)]^{-1},\id_3 \bigr\rangle -2]
\,=\,1-\frac{\lambda}{\lambda+2\mu}\tr( \mathcal{E}_{m,s} ) \;,\notag \\
\dd\varrho_b\,=\,&-\frac{\lambda}{\lambda+2\,\mu}\bigl\langle  \overline{Q}_{e,s}^T(\nabla (\,\overline{Q}_{e,s}\nabla_x\Theta(0)\, e_3)|0)[\nabla_x\Theta(0)]^{-1},\id_3 \bigr\rangle   \\
&+\frac{\lambda}{\lambda+2\,\mu}\bigl\langle  \overline{Q}_{e,s}^T(\nabla m|0)[\nabla_x\Theta(0)]^{-1}(\nabla n_0|0)[\nabla_x\Theta(0)]^{-1},\id_3 \bigr\rangle
\,=\,-\frac{\lambda}{\lambda+2\mu}{\rm tr} [{\rm C}_{y_0}\mathcal{K}_{e,s} +\mathcal{E}_{m,s} {\rm B}_{y_0}] .\notag
\end{align}

Obviously,  if we know the total microrotation $\overline{R}_s(x_1,x_2) =\overline{Q}_{e,s}(x_1,x_2)\,Q_0(x_1,x_2,0)\in{\rm SO}(3)$, then we know  the microrotation $\overline{R}_\xi$ of the parental three-dimensional problem, since we assume it is independent of $x_3$.

It is noteworthy that the existence result in the $O(h^3)$-model is not simply the truncated version of the existence result for the $O(h^5)$-model. Both existence results require uniformly positive constitutive parameters, in particular    $\mu_{\rm c}>0$.  {However, the existence result for the $O(h^5)$-model needs less stringent
assumptions on the plate thickness versus the initial curvature. This may indicate
that the $O(h^5)$-model is intrinsically more stable than the $O(h^3)$-model. We will
illuminate this possible feature in future computational researches.
Since classical shell models do not have any drill contribution, it is interesting
to consider the no drill case generated by $\mu_{\rm c}=0$ (free drill, but no energy
contribution associated to this deformation mode).} In this interesting limit
case, we would need new generalized Korn's inequalities \cite{Neff00b,Pompe03,Pompe10,neff2014counterexamples}, which couple the smoothness of the rotation field $\overline{R}_s$ with the coercivity with respect to the deformation $m$, in the sense that
\begin{align}
\lVert \overline{R}_s^T \,(\nabla m|0)+(\nabla m|0)^T\overline{R}_s\rVert^2_{{\rm L}^2(\omega)}\geq c^+\lVert m\rVert^2_{{\rm H}^1(\omega)}\,.
\end{align}
However,  such an estimate is currently only known for $ \overline{R}_s\in C(\overline{\omega},{\rm SO}(3))$, but the elastic shell energy only assures $ \overline{R}_s\in {\rm H}^1(\omega,{\rm SO}(3))$. More research is needed in this direction.

\bigskip

\begin{footnotesize}
	\noindent{\bf Acknowledgements:}   This research has been funded by the Deutsche Forschungsgemeinschaft (DFG, German Research Foundation) -- Project no. 415894848 (M. B\^{\i}rsan, P. Lewintan and P. Neff). The  work of I.D. Ghiba  was supported by a grant of the Romanian Ministry of Research
	and Innovation, CNCS--UEFISCDI, project number
	PN-III-P1-1.1-TE-2019-0397, within PNCDI III.
\end{footnotesize}

\begin{footnotesize}
\bibliographystyle{plain} %plain

\addcontentsline{toc}{section}{References}

\appendix
\setcounter{equation}{0}

	\section*{Appendix. Properties of the considered tensors}\label{propABAppendix}\setcounter{section}{1}\setcounter{equation}{0}
\addcontentsline{toc}{section}{Appendix}
In this paper we use some properties of the tensors involved in the variational formulation of the shell model \cite{GhibaNeffPartI}.

\begin{proposition}\label{propAB}
	The following identities are satisfied :
	\begin{itemize}
		\item [i)]

		$\tr[{\rm A}_{y_0}]\,=\,2,$ \quad	${\det}[{\rm A}_{y_0}]\,=\,0;\quad $
		$\tr[{\rm B}_{y_0}]\,=\,2\,{\rm H}\,$,\quad  ${\det}[{\rm B}_{y_0}]\,=\,0,$

		\item[ii)] ${\rm B}_{y_0}$ satisfies the equation of Cayley-Hamilton type
		$
		{\rm B}_{y_0}^2-2\,{\rm H}\, {\rm B}_{y_0}+{\rm K}\, {\rm A}_{y_0}\,=\,0_3;
	$
		\item[iii)] ${\rm A}_{y_0}{\rm B}_{y_0}\,=\,{\rm B}_{y_0}{\rm A}_{y_0}\,=\,{\rm B}_{y_0}$, \quad  ${\rm A}_{y_0}^2\,=\,{\rm A}_{y_0}$;
		\item[iv)] ${\rm C}_{y_0}\in \mathfrak{so}(3)$, $\quad {\rm C}_{y_0}^2\,=\,-{\rm A}_{y_0}$ and it has the simplified form $
		{\rm C}_{y_0}:=Q_0(0)\begin{footnotesize}\begin{pmatrix}
		0&1&0 \\
		-1&0&0 \\
		0&0&0
		\end{pmatrix}\end{footnotesize}Q_0^T(0)\in \mathfrak{so}(3).
	$

		\item[v)] $
		\overline{Q}_{e,s}^T\,(\nabla [\overline{Q}_{e,s}\nabla_x\Theta (0)\, e_3]\,|\,0)\,[\nabla_x \Theta(0)]^{-1}\,\,=\,\,{\rm C}_{y_0} \mathcal{K}_{e,s}-{\rm B}_{y_0};
		$
		\item[vi)] ${\rm C}_{y_0} \mathcal{K}_{e,s} {\rm A}_{y_0}\,\,=\,\,{\rm C}_{y_0} \mathcal{K}_{e,s} $;
		\item[vii)] $\mathcal{E}_{m,s} {\rm A}_{y_0}\,\,=\,\,\mathcal{E}_{m,s} $.
	\end{itemize}
\end{proposition}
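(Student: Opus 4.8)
The plan is to work in the basis of $\mathbb{R}^3$ adapted to the surface. Set $P:=\nabla_x\Theta(0)=(\nabla y_0\,|\,n_0)$, the $3\times3$ matrix whose columns are $\partial_{x_1}y_0,\partial_{x_2}y_0,n_0$, so that $Pe_\alpha=\partial_{x_\alpha}y_0$ for $\alpha=1,2$ and $Pe_3=n_0$. Conjugating ${\rm A}_{y_0}=(\nabla y_0\,|\,0)\,P^{-1}$ by $P$ and using $P^{-1}(\nabla y_0\,|\,0)=(e_1\,|\,e_2\,|\,0)$ gives $P^{-1}{\rm A}_{y_0}P=\mathrm{diag}(1,1,0)$. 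For ${\rm B}_{y_0}=-(\nabla n_0\,|\,0)\,P^{-1}$ note that $\langle n_0,n_0\rangle\equiv1$ forces $\partial_{x_\alpha}n_0\perp n_0$, hence $\partial_{x_\alpha}n_0\in\mathrm{span}\{\partial_{x_1}y_0,\partial_{x_2}y_0\}$; testing the expansion $-\partial_{x_\alpha}n_0=\sum_\beta c_{\beta\alpha}\,\partial_{x_\beta}y_0$ against $\partial_{x_\gamma}y_0$ and invoking ${\rm II}_{y_0}=-[\nabla y_0]^T\nabla n_0$ identifies $c={\rm I}_{y_0}^{-1}{\rm II}_{y_0}={\rm L}_{y_0}$, so $P^{-1}{\rm B}_{y_0}P=\begin{pmatrix}{\rm L}_{y_0}&0\\0&0\end{pmatrix}$ in block form. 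From these two normal forms, i), ii) and iii) are immediate: trace and determinant are read off the blocks (with $\tr{\rm A}_{y_0}=2$, $\tr{\rm B}_{y_0}=\tr{\rm L}_{y_0}=2{\rm H}$ and both determinants vanishing because of the third row); the Cayley--Hamilton identity ${\rm L}_{y_0}^2-2{\rm H}\,{\rm L}_{y_0}+{\rm K}\,\id_2=0$ for the $2\times2$ matrix ${\rm L}_{y_0}$ lifts, after conjugating back by $P^{-1}$, to ${\rm B}_{y_0}^2-2{\rm H}\,{\rm B}_{y_0}+{\rm K}\,{\rm A}_{y_0}=0_3$; and the obvious block products give ${\rm A}_{y_0}{\rm B}_{y_0}={\rm B}_{y_0}{\rm A}_{y_0}={\rm B}_{y_0}$ and ${\rm A}_{y_0}^2={\rm A}_{y_0}$.

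For iv), skew-symmetry of ${\rm C}_{y_0}=\det(P)\,P^{-T}JP^{-1}$, with $J:=\begin{pmatrix}0&1&0\\-1&0&0\\0&0&0\end{pmatrix}=-J^T$, is immediate. For the simplified form I would use the polar decomposition $P=Q_0(0)\,U_0(0)$ of \cite{neff2013grioli}; since the third column of $P$ is a unit vector orthogonal to the first two, $P^TP=\mathrm{diag}({\rm I}_{y_0},1)$, hence $U_0(0)=\mathrm{diag}({\rm I}_{y_0}^{1/2},1)$ and $\det U_0(0)=\sqrt{\det{\rm I}_{y_0}}=\det P$. Writing $P^{-1}=U_0^{-1}Q_0^T$ and using the elementary planar identity $B^T\varepsilon B=\det(B)\,\varepsilon$ (with $\varepsilon=\begin{pmatrix}0&1\\-1&0\end{pmatrix}$ and $B={\rm I}_{y_0}^{-1/2}$ symmetric) one gets $\det(P)\,U_0^{-1}JU_0^{-1}=J$, whence ${\rm C}_{y_0}=Q_0(0)\,J\,Q_0^T(0)$. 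Then ${\rm C}_{y_0}^2=Q_0J^2Q_0^T$ with $J^2=\mathrm{diag}(-1,-1,0)$, while ${\rm A}_{y_0}=P\,\mathrm{diag}(1,1,0)\,P^{-1}=Q_0\,\mathrm{diag}(1,1,0)\,Q_0^T$ (again from the block structure of $U_0$), so ${\rm C}_{y_0}^2=-{\rm A}_{y_0}$.

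Items v)--vii) exploit that $\mathcal{E}_{m,s}$, $\mathcal{K}_{e,s}$ and the left side of v) are all of the form $(w_1\,|\,w_2\,|\,0)\,P^{-1}$. For v): since $\nabla_x\Theta(0)\,e_3=Pe_3=n_0$, the left side equals $\overline{Q}_{e,s}^T(\nabla[\overline{Q}_{e,s}n_0]\,|\,0)\,P^{-1}$; by the product rule $\partial_{x_\alpha}(\overline{Q}_{e,s}n_0)=(\partial_{x_\alpha}\overline{Q}_{e,s})n_0+\overline{Q}_{e,s}\,\partial_{x_\alpha}n_0$, and since the skew tensor $\overline{Q}_{e,s}^T\partial_{x_\alpha}\overline{Q}_{e,s}$ acts as $v\mapsto\mathrm{axl}(\overline{Q}_{e,s}^T\partial_{x_\alpha}\overline{Q}_{e,s})\times v$, combined with the identity ${\rm C}_{y_0}w=w\times n_0$ (which follows from ${\rm C}_{y_0}=Q_0JQ_0^T$, $\mathrm{axl}(J)=-e_3$ and $Q_0e_3=n_0$, so $\mathrm{axl}({\rm C}_{y_0})=-n_0$), one obtains $\overline{Q}_{e,s}^T(\partial_{x_\alpha}\overline{Q}_{e,s})n_0={\rm C}_{y_0}\,\mathrm{axl}(\overline{Q}_{e,s}^T\partial_{x_\alpha}\overline{Q}_{e,s})$; collecting the two columns and right-multiplying by $P^{-1}$ produces precisely ${\rm C}_{y_0}\mathcal{K}_{e,s}$ from the first summand and $-{\rm B}_{y_0}$ from the second. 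For vi) and vii), the single observation that any $N=(w_1\,|\,w_2\,|\,0)\,P^{-1}$ satisfies $N{\rm A}_{y_0}=(w_1\,|\,w_2\,|\,0)\,P^{-1}(\nabla y_0\,|\,0)\,P^{-1}=(w_1\,|\,w_2\,|\,0)\,\mathrm{diag}(1,1,0)\,P^{-1}=N$ suffices: with $N=\mathcal{K}_{e,s}$ it gives $\mathcal{K}_{e,s}{\rm A}_{y_0}=\mathcal{K}_{e,s}$, hence vi) after left-multiplication by ${\rm C}_{y_0}$; with $N=\mathcal{E}_{m,s}$ (which has the required form, since its third column before the factor $P^{-1}$ is $\overline{Q}_{e,s}^T\overline{Q}_{e,s}\nabla_x\Theta(0)e_3-n_0=n_0-n_0=0$, or via the representation $\mathcal{E}_{m,s}=Q_0(\overline{R}_s^T\nabla m-Q_0^T\nabla y_0\,|\,0)\,P^{-1}$ from the proof of Theorem~\ref{th1}) it gives vii).

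I expect the main obstacle to be item iv): one must recognize that conjugation by $\det(P)\,P^{-T}(\cdot)P^{-1}$ collapses --- through the polar decomposition together with the planar cofactor identity $B^T\varepsilon B=\det(B)\,\varepsilon$ --- to the pure rotation conjugation $Q_0(0)(\cdot)Q_0^T(0)$. Once the explicit form ${\rm C}_{y_0}=Q_0JQ_0^T$ is in hand, the remaining claims (including the cross-product identity ${\rm C}_{y_0}w=w\times n_0$ used in v)) reduce to straightforward bookkeeping, and i)--iii), vi), vii) follow routinely from the adapted-basis normal forms of ${\rm A}_{y_0}$ and ${\rm B}_{y_0}$.
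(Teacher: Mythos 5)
Your proof is correct and, on the one item the paper actually proves (the simplified form ${\rm C}_{y_0}=Q_0(0)JQ_0^T(0)$ in iv)), you take essentially the same route as the paper: polar decomposition $\nabla_x\Theta(0)=Q_0U_0$ together with the block structure of $U_0$ forced by $\nabla_x\Theta(0)^T\nabla_x\Theta(0)=\mathrm{diag}({\rm I}_{y_0},1)$. The only cosmetic difference is that you phrase the key cancellation $\det(U_0)\,U_0^{-1}JU_0^{-1}=J$ via the planar cofactor identity $B^T\varepsilon B=\det(B)\,\varepsilon$, whereas the paper does a direct $3\times 3$ entry-by-entry check; the content is identical. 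For the remaining items the paper gives no proof at all (it refers to the companion paper \cite{GhibaNeffPartI}), so there is nothing to compare against; your adapted-basis arguments are a genuine and correct supplement. The normal forms $P^{-1}{\rm A}_{y_0}P=\mathrm{diag}(1,1,0)$ and $P^{-1}{\rm B}_{y_0}P=\begin{pmatrix}{\rm L}_{y_0}&0\\0&0\end{pmatrix}$ dispose of i)--iii) cleanly by conjugating the $2\times2$ Cayley--Hamilton identity for ${\rm L}_{y_0}$. For v), your use of $\mathrm{axl}({\rm C}_{y_0})=-n_0$ and the identity ${\rm C}_{y_0}w=w\times n_0$ to rewrite $(\overline{Q}_{e,s}^T\partial_{x_\alpha}\overline{Q}_{e,s})\,n_0$ is correct, and the observation in vi)--vii) that any matrix of the form $(w_1\,|\,w_2\,|\,0)P^{-1}$ is a fixed point of right-multiplication by ${\rm A}_{y_0}$ (using $\mathcal{E}_{m,s}=(\overline{Q}_{e,s}^T\nabla m-\nabla y_0\,|\,0)P^{-1}$ after subtracting $\id_3=PP^{-1}$) is the natural economical argument. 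In short: same approach as the paper where they overlap, plus a correct self-contained treatment of the parts the paper defers.
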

\begin{proof}
	For the proof of this proposition we refer to \cite{GhibaNeffPartI}. Here, we prove only the third identity of iv).

	We  have
	$
	[	\nabla_x \Theta(x_3)]\, e_3\,= \, n_0\,.
	$
	Let us recall that $X\in \mathrm{GL}^+(3)$ satisfies the \textit{\textbf{G}eneralized \textbf{K}irchhoff \textbf{C}onstraint} ({\rm GKC}) \cite{Neff_Habil04} if
	$
	X\in {\rm GKC }:=\{X\in \mathrm{GL}^+(3)\ |\ X^T \, X\, e_3\,=\,\varrho^2 e_3,\ \varrho\in \mathbb{R}^+\}\, .
	$
	For all $X\in {\rm GKC }$ with the polar decomposition $X=R\, U _0$, if follows that  $ U _0\in  {\rm GKC }$.
	In view of this property and $\nabla \Theta(x_3)=Q_0(x_3)U_0(x_3)$, it follows\footnote{Here, $*$ denotes quantities having expressions which are not relevant for our calculations.}
	$
	U _0(x_3)\,=\,\begin{footnotesize}
	\begin{pmatrix}
	* &* &0  \\
	* &* &0 \\
	0 &0 &1
	\end{pmatrix}
	\end{footnotesize}\in {\rm Sym}^+(3)\,.
	$
	Since  $\det Q_0\,=\,1$, we deduce
	\begin{align}\label{formaC}
	{\rm C}_{y_0}&\,=\,{\rm Cof}	(\nabla_x \Theta(0))\,\begin{footnotesize}\begin{pmatrix}
	0&1&0 \\
	-1&0&0 \\
	0&0&0
	\end{pmatrix}\end{footnotesize}\,  [	\nabla_x \Theta(0)]^{-1} \,=\,Q_0(0)\,(\det U_0(0))\,  U_0^{-1}(0)\,\begin{footnotesize}\begin{pmatrix}
	0&1&0 \\
	-1&0&0 \\
	0&0&0
	\end{pmatrix}\end{footnotesize}\, U_0^{-1}(0)\, Q_0^T(0).
	\end{align}
	Direct computations give us $\left(
		\begin{array}{ccc}
		a & x & 0 \\
		x & b & 0 \\
		0 & 0 & 1 \\
		\end{array}
		\right)\,\left(
		\begin{array}{ccc}
		0 & 1 & 0 \\
		-1 & 0 & 0 \\
		0 & 0 & 0 \\
		\end{array}
		\right)\,\left(
		\begin{array}{ccc}
		a & x & 0 \\
		x & b & 0 \\
		0 & 0 & 1 \\
		\end{array}
		\right)=\left(
		\begin{array}{ccc}
		0 & a b-x^2 & 0 \\
		x^2-a b & 0 & 0 \\
		0 & 0 & 0 \\
		\end{array}
		\right)$ and ${\rm det}\left(\begin{array}{ccc}
		a & x & 0 \\
		x & b & 0 \\
		0 & 0 & 1 \\
		\end{array}
		\right)^{-1}=\dfrac{1}{a b-x^2}.$
		Using these calculation in \eqref{formaC}, we obtain
	\begin{equation}
	(\det U_0(0))\, U_0^{-1}(0)\, \,\begin{footnotesize}\begin{pmatrix}
	0&1&0 \\
	-1&0&0 \\
	0&0&0
	\end{pmatrix}\end{footnotesize}\,  U_0^{-1}(0) \,=\,\begin{footnotesize}\begin{pmatrix}
	0&1&0 \\
	-1&0&0 \\
	0&0&0
	\end{pmatrix}\end{footnotesize}.
	\end{equation}
	Hence, the alternator tensor has the
	representation given in iv).
\end{proof}
\end{footnotesize}

\end{document}